\let\oldsection=
\renewcommand{\subsection}[1]{\par\vspace{.18in}\noindent\addtocounter{subsection}{1}\setcounter{equation}{0}{\bf\thesubsection\hspace{9pt}#1}}
\newtheorem{thm}{Theorem}[subsection]
\let\c@fact\c@theorem\makeatother\newtheorem{lem}[thm]{Lemma}
\newtheorem{cor}[thm]{Corollary}
\newtheorem{prop}[thm]{Proposition}
\theoremstyle{definition}
\newtheorem{defn}[thm]{Definition}
\newtheorem{prob}[thm]{Problem}
\newtheorem{ques}[thm]{Questions}
\newtheorem{rem}[thm]{Remark}
\newtheorem{rems}[thm]{Remarks}
\numberwithin{equation}{subsection}
\numberwithin{thm}{section}
\newcommand{\Smod}{A{\text{\rm --mod}}}
\newcommand{\Agrmod}{{A{\text{\rm --grmod}}}}
\newcommand{\Bgrmod}{{B{\text{\rm --grmod}}}}
\newcommand{\grhom}{\text{\rm hom}}
\newcommand{\grHom}{\text{\rm hom}}
\newcommand{\grExt}{\text{\rm ext}}
\newcommand{\wgr}{\widetilde{\text{\rm gr}}\,}
\newcommand{\op}{{\text{\rm op}}}
\newcommand{\wrad}{\widetilde{\text{\rm rad}}\,}
\newcommand{\Lamreg}{\Lambda^+_{\text{\rm reg}}}
\newcommand{\gr}{\text{\rm gr}}
\newcommand{\sZ}{{\mathscr{Z}}}
\newcommand{\sC}{{\mathscr {C}}}
\newcommand{\Ext}{{\text{\rm Ext}}}
\newcommand{\modR}{\mbox{mod--}R}
\newcommand{\BJmod}{B/J\mbox{--mod}}
\newcommand{\BJgrmod}{B/J\mbox{--grmod}}
\newcommand{\Amod}{A\mbox{--mod}}
\newcommand{\Bmod}{B\mbox{--mod}}
\newcommand{\Hom}{\text{\rm Hom}}
\newcommand{\End}{\operatorname{End}}
\newcommand{\ext}{\operatorname{Ext}}
\renewcommand{\hom}{\operatorname{hom}}
\newcommand{\ind}{\operatorname{ind}}
\newcommand{\sE}{\operatorname{{\mathscr E}}}
\newcommand{\sO}{{\mathscr{O}}}
\newcommand{\rad}{\operatorname{rad}}
\newcommand{\rDelta}{\Delta^{\text{\rm red}}}
\newcommand{\rnabla}{\nabla_{\text{\rm red}}}
\newcommand{\wM}{{\widetilde{M}}}
\newcommand{\wA}{{\widetilde{A}}}
\newcommand{\wB}{{\widetilde{B}}}
\newcommand{\wDelta}{{\widetilde{\Delta}}}
\newcommand{\wN}{{\widetilde{N}}}
\newcommand{\wP}{{\widetilde{P}}}
\newcommand{\wS}{{\widetilde{S}}}
\newcommand{\wL}{{\widetilde{L}}}
\newcommand{\wY}{{\widetilde{Y}}}
\newcommand{\blist}{\begin{list}{\rom{(\roman{enumi})}}{\setlength
{\leftmarg in}{0em} \setlength{\itemindent}{7ex}
\setlength{\labelsep}{2ex}\setlength{\listparindent}{\parindent}
\usecounter{enumi}}}
\newcommand{\elist}{\end{list}}
\begin{document}
\dedicatory{We dedicate this paper to the memory of J.A. Green}
\begin{abstract} In previous work, the authors introduced the notion of Q-Koszul algebras, as a tool to ``model"  module categories for semisimple algebraic groups over fields of large characteristics. 
Here we suggest the model extends to small characteristics as well. In particular, we present several conjectures
in the modular representation theory of semisimple groups which these algebras inspire.  They provide a new world-view of modular representation theory, potentially valid for
some root systems in all characteristics.  In fact, we give a non-trivial example in which $p=2$. This paper begins a
systematic study of Q-Koszul algebras, viewed as  interesting objects in their own right.  \end{abstract}

 \title[Q-Koszul algebras and three conjectures]{Q-Koszul algebras and three conjectures}\author{Brian J. Parshall}
\address{Department of Mathematics \\
University of Virginia\\
Charlottesville, VA 22903} \email{bjp8w@virginia.edu {\text{\rm
(Parshall)}}}
\author{Leonard L. Scott}
\address{Department of Mathematics \\
University of Virginia\\
Charlottesville, VA 22903} \email{lls2l@virginia.edu {\text{\rm
(Scott)}}}

\thanks{Research supported in part by the National Science
Foundation}
\maketitle

\section{Introduction} Let $A=\bigoplus_{i\geq 0}A_i$ be a positively graded algebra over a
field $k$. For simplicity, assume $A$ is finite dimensional and that $k$ is algebraically closed.
Let $\Agrmod$ be the category of finite dimensional $\mathbb Z$-graded $A$-modules, and let $\Amod$
be the category of finite dimensional $A$-modules. The abelian categories $\Agrmod$ and $\Amod$ each have enough
projective (and injective) modules. If $M=\bigoplus_iM_i\in\Agrmod$, for any integer $r$, $M\langle r\rangle\in \Agrmod$
is defined by $M\langle r\rangle_i:=M_{i-r}$. If $\grExt^\bullet$ denotes the Ext-bifunctor in $\Agrmod$, then, for $M,N\in\Agrmod$, 
\begin{equation}\label{gradedExt}\Ext^n_A(M,N)=\bigoplus_{r
\in \mathbb Z}\grExt^n_A(M,N\langle r\rangle)), \quad\forall n\in\mathbb N,\end{equation}
where the left hand side is computed in $\Amod$, after forgetting the gradings on $M$ and $N$. 
 One
says that $A$ is Koszul provided that each irreducible module $L$, when
regarded as a graded $A$-module concentrated in grade 0, has a projective resolution $P^\bullet\twoheadrightarrow L$ in $\Agrmod$ in which $P^n$ has head which is pure of grade $n$. Equivalently, $\grExt^n_A(L,L'\langle r\rangle)\not=0\implies n=r$ for any two irreducible $A$-modules  $L,L'$
concentrated in grade 0.

Ever since the pioneering work in \cite{BGS} (see also \cite{PS0}), Koszul algebras have played a prominent role
in representation theory. For example, \cite{BGS} proved that if $\sO_0$ denotes
the principal block for the category $\sO$ of a complex semisimple Lie algebra $\mathfrak g$, then 
$\sO_0$ is equivalent to the module category of a finite dimensional Koszul algebra
$A$. Also, \cite{AJS} and \cite{Riche} show the restricted Lie algebra of a semisimple, simply connected algebraic group $G$ in
characteristic $p>0$ is Koszul, provided that $p$ is sufficiently large, depending on the root system. Nevertheless,
the Koszul property generally fails for irreducible $G$-modules outside the Janzten region (for any
$p$). More precisely, the finite dimensional algebras $A_\Gamma$ below governing the representation theory of $G$ are
mostly not Koszul.

Let
$\Gamma$ be a finite set of dominant weights on $G$ which is an ideal in the dominance order. The
category of finite dimensional rational $G$-modules which have composition factors consisting of those irreducible
modules $L(\gamma)$, $\gamma\in \Gamma$, is equivalent to the module category $A_\Gamma$-mod,
where $A_\Gamma$ is a finite dimensional algebra $A_\Gamma$. While the algebra $A_\Gamma$ is necessarily
quasi-hereditary, that fact alone is not, without further structure, sufficient to understand more deeply the
representation theory of $G$. In this spirit, the recent paper \cite{PS13a} introduced the notion of a
``standard Q-Koszul algebra" as a potential model for the representation theory of $G$---albeit (at that time) for large primes and
$p$-regular weights.  In more
detail, assume that $\Gamma$ is restricted to consist of $p$-regular weights. (Thus, $\Gamma$ is a finite
ideal in the poset of all $p$-regular dominant weights.) The algebra $A_\Gamma$
has a filtration by ideals (which arise from the radical series of its quantum analogue), and one can
form the associated graded algebra $\wgr A_\Gamma$.  Then a main result in \cite{PS13a} proves that
the algebra $\wgr A_\Gamma$ is a standard Q-Koszul algebra, provided that $p$ is ``sufficiently large."  Here ``sufficiently large"
means that the Lusztig character formula is assumed to hold for $G$, and $p\geq 2h-2$ is odd (with $h$ the
Coxeter number of $G$). Further, much of the homological
algebra of $A_\Gamma$ can be determined from that of $\wgr A_\Gamma$, but now assuming only that
$p\geq 2h-2$ is odd. (See the remarks after Conjecture IIa in \S5 below.)

 A major aim of this paper, undertaken in its final sections, is to suggest a much broader role for Q-Koszul
 algebras in the representation theory of $G$, one valid also in smaller characteristics and for singular weights. 
 Earlier sections initiate a systematic study of Q-Koszul algebras, giving complete definitions and establishing
 some basic (but new) general results, not even yet observed in the large prime cases studied earlier.

In more detail, a finite dimensional, positively graded algebra $A$ is Q-Koszul provided that the grade 0 subalgebra $A_0$ is quasi-hereditary (with weight poset
denoted $\Lambda$). In addition, it
is required that 
\begin{equation}\label{QKoszul}\grExt^n_A(\Delta^0(\lambda),\nabla_0(\mu)\langle r\rangle)\not=0\implies n=r,\,\, \forall
\lambda,\mu\in\Lambda, n\in{\mathbb N}, r\in{\mathbb Z}\end{equation}
 Here $\Delta^0(\lambda)$ (respectively, $\nabla_0(\mu)$) is the standard
(respectively, costandard) module of corresponding to $\lambda$ (respectively, $\mu$) in $\Lambda$. Thus, if $A_0$ is semisimple, $A$ is
just a Koszul algebra. But in the situations we have in mind, $A_0$ is hardly ever semisimple!  Q-Koszul
algebras are studied in \S2. One main result, given in Theorem \ref{firstprop}, is  that $A$ is tight (i.~e., $A$ is generated
by $A_0$ and $A_1$). Then Theorem \ref{thirdprop} shows that $A$ is a quadratic algebra (see \S2 for a precise definition). This suggests the (future) project of
explicitly describing $A$ by generators and relations (for particular Q-Koszul algebras of interest
in modular representation theory). 

Another important result, also given in Theorem \ref{thirdprop}, shows that if $A$ is Q-Koszul, then the left $A_0$-module
$A_1$ has a $\Delta^0$-filtration---in fact, the later Theorem \ref{tensortensor} shows that the $(A_0,A_0^{\text{\rm op}})$-bimodule
$A_1$ has a $\Delta^0\otimes_k\Delta^{0,\text{\rm op}}$-filtration. For $s\geq 1$ and $r\geq 0$, let $\Omega_r(A_s)$
be the $r$th syzygy module of $A_r$. Then Lemma \ref{biglemma}  shows that the $A_0$modules $\Omega_{s-1}(A_s),
\Omega_s(A_s), \Omega_{s+1}(A_s), \cdots$ all have $\Delta^0$-filtrations. Of course, this extends the result just
mentioned from Theorem \ref{thirdprop} since $\Omega_0(A_1)=A_1$. \S2 contains a number of similar results,
often cast in the more general setting of $n$-Q-Koszul algebras, which are sometimes assumed to be quasi-hereditary
(automatic in the standard Q-Koszul case).

Section 3 is concerned with standard Q-Koszul algebras. Suppose that a finite dimensional algebra $A$ is positively graded and quasi-hereditary (with weight poset 
$\Lambda$).  For $\lambda\in\Lambda$, let $\Delta(\lambda)$ and $\nabla(\lambda)$ denote the corresponding
standard and costandard modules, respectively. It is known from \cite{PS10} that the subalgebra $A_0$ of pure
grade 0 is also quasi-hereditary with weight poset $\Lambda$ (and standard and costandard modules denoted
$\Delta^0(\lambda)$ and $\nabla_0(\lambda)$, respectively, for $\lambda\in\Lambda$). We say that $A$
is a {\it standard Q-Koszul algebra} provided that
\begin{equation}\label{standard}\begin{cases} \grExt^n_A(\Delta(\lambda),\nabla_0(\mu)\langle r\rangle)\not=0\implies
n=r;
\\
\grExt^n(\Delta^0(\mu),\nabla(\lambda)\langle r\rangle)\not=0\implies n=r.\end{cases}\quad\forall \lambda,\mu\in\Lambda,
n\in{\mathbb N}, r\in{\mathbb Z}.\end{equation}
  The main result, given in Theorem \ref{better}, proves that if $A$ is 
standard Q-Koszul, then it is Q-Koszul. Interestingly, the result is understood conceptually from
a triangulated category point of view, inspired by similar methods in \cite{CPS1}.  Another result in this section,
Corollary \ref{cor3.5}, draws on the work of \S2 to show that the grade 2 relation module $W_2$ of a standard Q-Koszul algebra $A$ has an
especially nice $\Delta^0$-filtration. This happens in spite of the fact that its grade 2 term $A_2$ need not have a $\Delta^0$-filtration.
That this can occur is a consequence of an example  in a Weyl module context, due to Will Turner, and discussed in \S5.
 
 Both sections 5 and 6 treats a highly non-trivial case in which the characteristic $p$ is small. In fact, $p=2$.  Explicitly, we consider the
Schur algebra $S(5,5)$ associated to 5-homogenous polynomial representations of $GL_5(k)$ when $k$ has characteristic 2. In this case,  {\it all} the weights are $2$-singular, and there is no
proposed analog of the Lusztig character formula for irreducible modules. Nevertheless, we prove that $\wgr S(5,5)$ is standard
Q-Koszul. (Actually, we focus on $\wgr A$ for $A$ the ``principal block" of $S(5,5)$, leaving details beyond this case to the reader. By the principal block we mean the block containing the determinant representation.)
This result takes as its starting point computer calculations done by Jon Carlson
\cite{Car}.  It is interesting to note that while the Schur algebra $S(5,5)$ is quasi-hereditary, the graded
algebra $\gr S(5,5)$, obtained (unlike $\wgr  S(5,5)$) from the radical series filtration of $S(5,5)$ itself, is not quasi-hereditary. In addition, $S(5,5)$ is not Koszul (nor is the graded algebra $\gr S(5,5)$ Koszul either). See Remark \ref{rem4.2}.  

 Section 7 discusses three natural conjectures suggested by this paper in combination with our previous work. Conjecture II proposes a generalization to small primes and singular weights of (already interesting) homological results in the large prime, $p$-regular cases. This conjecture does not involve graded algebras in its statement. However, it is inspired by Q-Koszul theory, which might well play a role in its proof. Conjecture I asserts that a rich supply
of Q-Koszul algebras is available, while two supplementary conjectures, labeled Conjectures IIa and IIb, show the relevance of these algebras to Conjecture II. Finally, Conjecture III, motivated by Koszul algebra theory in the quantum
case, provides calculations, in terms of Kazhdan-Lusztig polynomials, of numbers needed to make Conjecture II
explicit. All three Conjectures I,II, III,
as well as Conjectures IIa, IIb, hold for the $p=2$ example studied in \S\S5,6, and collapse to known or recently proved
results in the large prime, $p$-regular weight cases.

 Algebras which are Q-Koszul in our sense are also $T$-Koszul in the sense of Madsen \cite{Madsen}. This
implies that the algebra $\Ext^\bullet_A(T,T)$, where $T$ is a full tilting module for $A_0$ (viewed as an
$A$-module), are again $T$-Koszul.
As formulated in Questions \ref{Madsonques}, we do not know if a similar permanence holds for Q-Koszul or standard Q-Koszul algebras. As the discussion
of this paper shows there are a vast number of important examples of algebras which are standard Q-Koszul.
We expect to
return to Questions \ref{Madsonques} and other issues dealing with the product structure of their Ext-algebras in a later paper.  

Another topic for further research is the speculation, sketched in the final Remarks \ref{finremark}, that the conjectures of \S7 may often have explicit applications to computing Ext-groups between irreducible modules.

 \medskip\medskip
 \begin{center} {\Large\bf Part I: Q-Koszul Algebras}\end{center}
 
 \medskip\medskip

\section{Q-Koszul Algebras}

As above, $A$ denotes a non-negatively graded, finite dimensional algebra.
 Let $\pi:A\twoheadrightarrow A_0\cong A/\sum_{i>0}A_i$ be the quotient homomorphism. If $d$ is an integer,
let $A_{\geq d}=\sum_{i\geq d}A_i$, and define $A_{<d}$, $A_{\leq d}$, $A_{>d}$ analogously. Similar notations
will be used for graded $A$-modules. The algebra $A_0$ may be regarded itself as graded (and concentrated in
grade 0), and every graded
$A$-module $M=\bigoplus_rM_r$ restricts naturally to a graded $A_0$-module, as does each subspace $M_r$, $r\in\mathbb Z$.

Let $M\in\Agrmod$ be concentrated in grades $\geq r$. Then there is a projective $P\in\Agrmod$ which is 
also concentrated in grades $\geq r$ and a surjective graded homomorphism $P\twoheadrightarrow M$. (One can even
assume $P$ is a projective cover of $M$. See \cite[Rem. 8.4]{PS13a} for more discussion.)
Thus, the kernel of the map $P\twoheadrightarrow M$ is a graded module concentrated in grades $\geq r$.  This process can be
continued in the evident way to obtain a graded projective resolution of  $M$ in which each term is
concentrated in grades $\geq r$. A useful consequence is that, if $X$ (respectively, $Y$) is a graded $A$-module concentrated in grades $\geq r$ (respectively, $\leq s$), then, for any non-negative integer $n$,
\begin{equation}\label{earlyremark} \grExt^n_{A}(X,Y)\not =0\implies r\leq s.\end{equation}

 Any $A_0$-module $M$ can be regarded as a graded $A$-module concentrated in grade 0 by making
 $A$ act on $M$ through $\pi$. Thus, there is an exact, additive functor
 \begin{equation}\label{i*}
 i_*:A_0{\text{\rm--mod}}\longrightarrow \Agrmod.\end{equation} 
 Usually, $i_*M$ is denoted simply as $M$ again. Of course, given $A_0$-modules $X,Y$, this
 induces a linear map (still denoted $i_*$)
 \begin{equation}\label{Extmap}i_*: \Ext^r_{A_0}(X,Y)\to\grExt^r_A(X,Y), \quad \forall r \geq 0.\end{equation}
 It is clear that, for $r=0,1$, this map is an isomorphism.   

Every irreducible graded $A$-module 
has the form $L\langle m\rangle$, $m\in\mathbb Z$, where $L$ is an irreducible $A$-module concentrated
in grade 0. In fact, the irreducible $A$-modules concentrated in grade 0 identify with the
irreducible $A_0$-modules.  (However, we do {\it not} assume that $A_0$ is semisimple.) Let $\Lambda$ be a fixed set indexing the distinct isomorphism classes
of irreducible $A_0$-modules.

Suppose that, in addition, $A_0$ is a quasi-hereditary algebra, defined by a poset structure $\leq$ on
$\Lambda$. Thus, $A_0$ has standard (respectively, costandard, irreducible) modules $\Delta^0(\lambda)$ (respectively,
$\nabla_0(\lambda)$, $L(\lambda)$), $\lambda\in\Lambda$, satisfying the usual axioms for a highest weight category; see
\cite{CPS-1}.

The previous paragraph is summarized by saying (as a definition) that $A$ is a {\it $0$-Q-Koszul algebra.} More generally, for $n\geq0$, 
$A$ is an {\it $n$-Q-Koszul algebra} provided that $A_0$ is quasi-hereditary as above, and, for  all $\lambda,\mu\in
\Lambda$,  and all $j\in\mathbb Z$, if $0<i\leq n$, then
\begin{equation}\label{nQ} \forall j\in{\mathbb N},\quad \grExt^i_A(\Delta^0(\lambda),\nabla_0(\mu)\langle j\rangle)\not=0\implies i=j.
\end{equation}
Equivalently, using the isomorphism (\ref{gradedExt}), this means that, for $0\leq i\leq n$, 
\begin{equation}\label{graded2}\Ext^i_A(\Delta^0(\lambda),\nabla_0(\mu))\cong\grExt^i_A(\Delta^0(\lambda),\nabla_0(\mu)\langle i\rangle).
\end{equation}
When $A_0=k$, the notion of an $n$-Q-Koszul algebra identifies with the notion of an $n$-Koszul algebra
defined in \cite[p. 29]{PP}.

A graded algebra $A$ is called Q-Koszul provided that it is $n$-Q-Koszul for all integers
 $n\in{\mathbb N}$.  In other words, condition (\ref{QKoszul}) holds. The notion of a Q-Koszul algebra is left-right symmetric as is the notion of
 standard Q-Koszul introduced in \S3. We generally prefer to work with left modules.
 
   \begin{thm}\label{derived}  (a) Assume that $A$ is $n$-Q-Koszul for some fixed integer $n\geq 1$.  For $A_0$-modules $X,Y$,  the map  (\ref{Extmap}) for $r\leq n$ is an isomorphism
$$ i_*: \Ext^r_{A_0}(X,Y)\overset\sim\longrightarrow\grExt^r_A(X,Y).$$
 (b)  Now assume that $A$ is Q-Koszul. 
 Then the natural functor $i_*:A_0{\text{\rm --mod}}
 \to A{\text{\rm --grmod}}$  in (\ref{i*}) induced by the quotient map $A\to A/A_{\geq 1}\cong A_0$ of graded
 algebras induces a full embedding
 $$i_*:D^b(A_0{\text{\rm --mod}})\to D^b(\Agrmod)$$
 of derived categories. 
 \end{thm}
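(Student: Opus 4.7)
For part (a), my plan is to bootstrap the $n$-Q-Koszul vanishing (\ref{nQ}) from standard/costandard pairs up to arbitrary $A_0$-module inputs in three stages.  First, a \emph{vanishing lemma}: if $X, Y \in A_0\text{-mod}$ admit $\Delta^0$- and $\nabla_0$-filtrations respectively, then $\grExt^r_A(X, Y) = 0$ for $1 \leq r \leq n$.  The case $X = \Delta^0(\lambda)$, $Y = \nabla_0(\mu)$ is the untwisted ($j=0$) instance of (\ref{nQ}); since $i_*$ is exact, a short exact sequence of $A_0$-modules stays short exact in $\Agrmod$, and the attached long exact $\grExt^\bullet_A$-sequences propagate the vanishing by induction on the two filtration lengths.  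Second, fix $X$ with a $\Delta^0$-filtration and arbitrary $Y$: pick an injective coresolution $Y \to I^\bullet$ in $A_0\text{-mod}$ (each $I^j$ inherits a $\nabla_0$-filtration from quasi-heredity of $A_0$), and consider the hyper-Ext spectral sequence
\[
E_1^{p,q} = \grExt^q_A(X, I^p) \Longrightarrow \grExt^{p+q}_A(X, Y),
\]
obtained from a Cartan--Eilenberg injective resolution of $i_*I^\bullet$ in $\Agrmod$.  Its rows $1 \leq q \leq n$ vanish by the lemma, so for $r \leq n$ only $E_2^{r,0}$ survives; combining this with the identification $\grHom_A(X, I^p) = \Hom_{A_0}(X, I^p)$ (any graded $A$-map between grade-$0$ modules factors through $A_0$) gives $E_2^{r,0} = \Ext^r_{A_0}(X, Y)$, and the comparison map induced by $i_*$ realizes the isomorphism.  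Finally, for arbitrary $X$, take a projective resolution $P^\bullet \to X$ in $A_0\text{-mod}$ (each $P^p$ having a $\Delta^0$-filtration) and rerun the analogous spectral sequence: its rows $1 \leq q \leq n$ vanish by the previous stage (since $\Ext^q_{A_0}(P^p, Y) = 0$ for $q \geq 1$), while the $q = 0$ row computes $\Ext^\bullet_{A_0}(X, Y)$.

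For part (b), part (a) holds for every $n$ when $A$ is Q-Koszul, so $i_*$ preserves all $\Ext$-groups between pairs of objects of $A_0\text{-mod}$.  A standard d\'evissage then upgrades this to fully-faithfulness on bounded derived categories: given $X^\bullet, Y^\bullet \in D^b(A_0\text{-mod})$, the distinguished triangles attached to the canonical truncations of $X^\bullet$ and $Y^\bullet$, together with the five-lemma applied to $\Hom_{D^b}(-,-[n])$, reduce the claim by induction on the lengths of the complexes to the single-degree case, where $\Hom_{D^b}(X[0], Y[n]) = \Ext^n_{A_0}(X, Y)$ is exactly what part~(a) delivers.

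I expect the main technical nuisance to be the spectral-sequence bookkeeping in the second and third stages of (a): one must verify that, for $r \leq n$, every differential into or out of $E_k^{r,0}$ has its source or target in one of the vanishing rows $1 \leq q \leq n$, forcing $E_\infty^{r,0} = E_2^{r,0}$.  This is routine once the ranges are lined up, and the entire argument could equivalently be organized as a direct double induction on $r$ via dimension-shifting short exact sequences (projective covers in $A_0\text{-mod}$ on the $X$-side, injective hulls on the $Y$-side), bypassing spectral sequences altogether.
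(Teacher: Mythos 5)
Your proof is correct, but it is organized differently from the paper's. The paper proceeds by a single induction on $n$ with two explicit dimension shifts: first it proves the isomorphism for arbitrary $X$ and $\nabla_0$-filtered $Y=I$, by shifting on the $X$-side via a projective cover $0\to K\to P\to X\to 0$ (here the vanishing $\grExt^m_A(P,I)=0$ for $1\leq m\leq n$ follows because $P$ is $\Delta^0$-filtered and $I$ is $\nabla_0$-filtered); then it passes to arbitrary $Y$ by shifting on the $Y$-side via an injective hull $0\to Y\to I\to C\to 0$. You instead start from the same vanishing lemma but then run two hyper-Ext (Cartan--Eilenberg) spectral sequences, one over an injective coresolution of $Y$ and one over a projective resolution of $X$, and you also fix the variables in the opposite order (you hold $X$ $\Delta^0$-filtered while freeing $Y$, then free $X$; the paper holds $Y$ $\nabla_0$-filtered while freeing $X$, then frees $Y$). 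Both routes are valid, and you correctly note at the end that the spectral-sequence packaging can be replaced by the dimension-shifting argument, which is precisely what the paper does; the paper's version is a little more elementary and avoids the convergence bookkeeping and the need to produce a comparison map of spectral sequences induced by $i_*$, which you appropriately flag as the main technical nuisance. Your treatment of part (b) via truncation triangles and the five lemma is the same standard d\'evissage the paper invokes as ``a well-known argument.''
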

 
 \begin{proof} Statement (b) follows from a well-known argument, once (a) is established. To prove (a),
 assume that $A$ is $n$-Koszul.  The  map (\ref{Extmap}) is an isomorphism trivially if $n=0,1$  as noted
 after (\ref{Extmap}). So assume $n>1$ and
 proceed by induction on $n$. Let $0\to K\to P\to M\to 0$ be an exact sequence in $A_0$-mod where $P$ is
 $A_0$-projective. Let $I$ be an $A_0$-module having a $\nabla^0$-filtration. Then
 $\grExt^m_A(P,I)=0=\grExt^m_{A_0}(P,I)$ for $m=n-1,n$, using the $n$-Q-Koszul property, since $P$ has a $\Delta^0$-filtration. By the long exact sequence of cohomology,
 there is a commutative diagram 
 \[ \begin{CD} 
 0 @>>> \Ext^{n-1}_{A_0}(K,I) @>\sim>> \Ext^n_{A_0}
 (M,I) @>>>0\\
 @. @VVV  @VVV @.\\ 
 0 @>>> \grExt^{n-1}_A(K,I) @>\sim>>\grExt^n_A(M,I)@>>> 0  
 \end{CD}\]
 in which the two rows are necessarily isomorphisms. By induction, the left hand vertical map is an isomorphism
 so the right hand vertical map  
$ \Ext^n_{A_0}(M,I)\to \grExt^n_A(M,I)$ is an isomorphism.  
 
 This proves
 (a) in case $N=I$ has a $\nabla_0$-filtration. So now assume that $N$ is arbitrary, and form an
 exact sequence $0\to N\to I\to C\to 0$ in $A_0$-mod, where $I$ is $A_0$-injective. Thus, $I$ has
 a $\nabla_0$-filtration, so we again get a commutative diagram
 \[\begin{CD}
  0@>>> \Ext^{n-1}_{A_0}(M,C) @>\sim>> \Ext^n_{A_0}(M,N) @>>>0 \\
 @. @VVV @VVV @.\\
 0 @>>> \grExt^{n-1}_A(M,C) @>\sim>>\grExt^n_A(M,N)@>>> 0 
 \end{CD}\]
 in which the horizontal maps are isomorphisms.  The left hand vertical map is an isomorphism by induction, so the right hand vertical
map is also an isomorphism as required.
  \end{proof}
 
The next two results consider the special cases in which $A$ is 1- or 2-Q-Koszul.  The overall outline and some
of the proof are influenced by the work of Beilinson-Ginzburg-Soergel \cite[\S2.3]{BGS} in the Koszul case, though our situation is more involved. A positively graded algebra
$A$ is, by definition, {\it tight} if it is generated by $A_0$ and $A_1$. Observe this implies (and is equivalent to) the
statement
$A_n=\underbrace{A_1\cdots A_1}_n$, for all $n\geq 1$. Also,  $A_m\cdot A_n=A_{m+n}$ for all $m,n\in\mathbb N$.

\begin{thm} \label{firstprop} Assume that $A$ is 1-Q-Koszul as above, that is, $A_0$ is quasi-hereditary, and for $\lambda,\mu\in\Lambda$, and for all integers $m$, 
$$\grExt^1_A(\Delta^0(\lambda),\nabla_0(\mu)\langle m\rangle)\not=0\implies m=1.$$
Then the graded algebra $A$ is tight.
 \end{thm}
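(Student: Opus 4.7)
The plan is to argue by contradiction, in the spirit of the BGS argument for Koszul rings adapted to the Q-Koszul setting. Suppose $A$ is not tight, and let $n\geq 2$ be minimal such that multiplication $A_1\otimes_{A_0}A_{n-1}\to A_n$ fails to be surjective; set $V:=A_n/(A_1\cdot A_{n-1})$, a nonzero left $A_0$-module. Minimality gives $A_i=(A_1)^i$ for $i<n$, and hence $A_i\cdot A_j\subseteq A_1\cdot A_{n-1}$ whenever $i+j=n$ with $i,j\geq 1$.

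The pivotal step is to show $\grExt^1_A(A_0,V\langle n\rangle)\neq 0$. I would use the short exact sequence $0\to A_{\geq 1}\to A\to A_0\to 0$ of graded left $A$-modules, together with the fact that $A$ is a free left $A$-module, to identify $\grExt^1_A(A_0,V\langle n\rangle)\cong\coker\bigl(\grHom_A(A,V\langle n\rangle)\to\grHom_A(A_{\geq 1},V\langle n\rangle)\bigr)$. The first $\grHom$ vanishes since $V\langle n\rangle$ is concentrated in grade $n\geq 1$. For the second, any internal-degree-zero graded $A$-linear map $\phi\colon A_{\geq 1}\to V\langle n\rangle$ must vanish on $A_i$ for $i\neq n$, while $A$-linearity (combined with the inclusions $A_i\cdot A_j\subseteq A_1\cdot A_{n-1}$ from minimality of $n$) forces $\phi|_{A_n}$ to factor through $V$. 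This identifies $\grExt^1_A(A_0,V\langle n\rangle)\cong\Hom_{A_0}(V,V)$, a nonzero space containing $\id_V$.

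To derive the contradiction from the 1-Q-Koszul hypothesis, I would localize to $\Delta^0$ on the left and $\nabla_0$ on the right. Since $A_0$ is quasi-hereditary, the regular module $A_0$ admits a $\Delta^0$-filtration; iterating the long exact Ext sequence along this filtration, the non-vanishing of $\grExt^1_A(A_0,V\langle n\rangle)$ forces $\grExt^1_A(\Delta^0(\lambda),V\langle n\rangle)\neq 0$ for some $\lambda\in\Lambda$. Dually, embed $V$ into its $A_0$-injective envelope $I$, which is $\nabla_0$-filtered by quasi-heredity, and apply the long exact sequence from $0\to V\to I\to C\to 0$ (pushed to $\Agrmod$ via $i_*$ and shifted by $\langle n\rangle$). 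The term $\grHom_A(\Delta^0(\lambda),C\langle n\rangle)$ vanishes by internal-grading (grade-$0$ source, grade-$n$ target with $n\geq 1$), and iterating the LES along the $\nabla_0$-filtration of $I$ together with the 1-Q-Koszul vanishing $\grExt^1_A(\Delta^0(\lambda),\nabla_0(\mu)\langle n\rangle)=0$ for $n\neq 1$ (applicable since $n\geq 2$) yields $\grExt^1_A(\Delta^0(\lambda),I\langle n\rangle)=0$. Thus $\grExt^1_A(\Delta^0(\lambda),V\langle n\rangle)=0$, the desired contradiction.

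The main obstacle is locating the right ``test module.'' More naive approaches, such as building the non-split extension $0\to V\langle n\rangle\to T\to A/A_{\geq n}\to 0$ (with $T$ the truncation of $A$ having $A_n$ replaced by $V$) and then filtering $A/A_{\geq n}$ grade-by-grade, produce a contribution to $\grExt^1$ at the grade-$(n-1)$ subquotient with internal shift exactly $1$, which is the only shift permitted by the 1-Q-Koszul condition; routing through $A_0$ itself instead cleanly places the detected non-vanishing $\grExt^1$ at shift $n\geq 2$, where the hypothesis can be applied.
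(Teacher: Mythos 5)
Your proof is correct, and the core idea is the same as the paper's: apply $\grExt^\bullet_A(-,\text{--})$ to the short exact sequence $0\to A_{\geq 1}\to A\to A_0\to 0$, identify $\grExt^1_A(A_0,-)$ at internal shifts $>1$ with a Hom-group out of $A_{\geq 1}$, and show the 1-Q-Koszul hypothesis kills it. Where you diverge is in the bookkeeping: you explicitly construct the obstruction module $V=A_n/(A_1A_{n-1})$ at the minimal bad grade $n$, compute $\grExt^1_A(A_0,V\langle n\rangle)\cong\Hom_{A_0}(V,V)\neq 0$, and only then reduce to the coefficients $\nabla_0(\mu)$ via a $\Delta^0$-filtration of $A_0$ and the $\nabla_0$-filtered injective envelope of $V$. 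The paper instead tests against $\nabla_0(\mu)\langle m\rangle$ from the outset: the 1-Q-Koszul condition (plus the $\Delta^0$-filtration of $A_0$) directly gives $\hom_A(A_{\geq 1},\nabla_0(\mu)\langle m\rangle)=0$ for $m>1$, and if $T:=A A_1\subsetneq A_{\geq 1}$ then the nonzero quotient $A_{\geq 1}/T$, concentrated in grades $\geq 2$, has a graded head $L(\mu)\langle m\rangle$ which embeds in $\nabla_0(\mu)\langle m\rangle$, producing a nonzero Hom and a contradiction. Your route buys the explicit isomorphism $\grExt^1_A(A_0,V\langle n\rangle)\cong\Hom_{A_0}(V,V)$, which is illuminating; the paper's route avoids constructing $V$ and the final filtration/injective-envelope reduction, making it shorter. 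Both are sound.
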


\begin{proof} The exact sequence $0\to A_{\geq 1}\to A\to A_0\to 0$ of graded $A$-modules gives
an exact sequence
\[\begin{aligned} \hom_A(A,\nabla_0(\mu)\langle m\rangle)\overset\alpha\longrightarrow &\hom_A(A_{\geq 1},\nabla_0(\mu)\langle m\rangle)\\
&\overset\beta\longrightarrow\grExt^1_A(A_0,\nabla_0(\mu)\langle m\rangle)\overset\gamma\longrightarrow\grExt^1_A(A,\nabla_0(\mu)\langle m\rangle)=0\end{aligned}\]
for all integers $m\geq 0$. The map $\alpha$ is necessarily 0 for all $m$: consider first the case $m=0$ (where
$\grHom_A(A_{\geq 1}, \nabla_0(\mu))=0$),
and then $m\geq 1$ (where $\grHom_A(A,\nabla_0(\mu)\langle m\rangle)=0$). Hence, $\beta$ is an isomorphism for all $m\geq 0$. Since $A$ is 1-Q-Koszul, it follows that 
$\hom_A(A_{\geq 1},\nabla_0(\mu)\langle m\rangle)=0$ if $m>1$.  (Observe that $A_0$ has a $\Delta^0$-filtration.)  

Let $T$ be the (graded) left ideal of $A$ generated by $A_1$. To show that $A$ is generated by $A_0,A_1$,
it suffices to prove that $T=A_{\geq 1}$. If not, then for some $m>1$ and $\mu$, $\hom_A(A_{\geq 1}/T,
\nabla_0(\mu)\langle m\rangle)\not=0$. Hence, $\hom_A(A_{\geq 1},\nabla_0(\mu)\langle m\rangle)\not=0$,
a contradiction. \end{proof}

Let $A$ be a positively graded algebra and let
$$T_{A_0}(A_1):=\bigoplus_{n\geq 0}\underbrace{A_1\otimes_{A_0}\cdots\otimes_{A_0}A_1}_n$$
be the tensor algebra of the $(A_0,A_0)$-bimodule $A_1$ (with the term for $n=0$ set to be $A_0$). Generalizing  the usual definition, the graded algebra $A$ is defined to be quadratic if the multiplication map $m:T_{A_0}(A_1)\to A$, defined by $a_1\otimes\cdots\otimes a_n\mapsto a_1\cdots a_n$,
is surjective, and if the relation ideal $I:=\ker\, m$ is generated by its grade 2 component. That
is, $I$ is generated by  the kernel $W_2$ of the multiplication map $A_1\otimes_{A_0}A_1\to A_2$.  Since $A_1$ is an $(A_0,A_0)$-bimoudle, it is a left module for both algebras
$A_0$ and $A_0^{{\text{\rm op}}}$ (and, of course, the two actions commute). For $\lambda\in\Lambda$, the
corresponding standard module $\Delta^{0,\text{\rm op}}(\mu)$ for $A^{0,\text{\rm op}}$ is defined to be a linear dual $\nabla(\lambda)^*$, 
viewed as a left $A_0^{\text{\rm op}}$-module. It has irreducible head $L^{\text{\rm op}}(\lambda)=L(\lambda)^*$.
\begin{thm} \label{thirdprop} Assume that $A$ is 2-Q-Koszul. (Thus, $A$ is also 1-Q-Koszul.)  
Then the following statements hold.

(a) $A$ is a quadratic algebra.

(b) The subsapce
$W_2$ of $A_1\otimes_{A_0}A_1$  defined above generates the kernel of the multiplication map
$A\otimes_{A_0}A_1\to A_{\geq 1}$ (respectively, $A_1\otimes_{A_0}A\to A_{\geq 1}$) as a left $A$-module (respectively,
as a left $A^{\text{\rm op}}$-module). 

(c)  The left $A_0$-module $A_1$ has a $\Delta^0$-filtration. Also, the left $A_0^\op$-module $A_1$  has
a $\Delta^{0,\op}$-filtration. 
  \end{thm}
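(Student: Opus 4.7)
The plan is to prove (c) first, then (b), and finally to deduce (a) from (b) by a degree-wise induction; the right-module assertions in (b) and (c) will follow from the left-right symmetry of the Q-Koszul condition. For (c), since $A_0$ is quasi-hereditary it suffices to verify $\Ext^1_{A_0}(A_1,\nabla_0(\mu))=0$ for every $\mu\in\Lambda$. By Theorem~\ref{derived}(a) (applicable because $A$ is 2-Q-Koszul), this equals $\grExt^1_A(i_*A_1,i_*\nabla_0(\mu))$; after shifting both arguments by one, it becomes $\grExt^1_A(i_*A_1\langle 1\rangle,\nabla_0(\mu)\langle 1\rangle)$, and $i_*A_1\langle 1\rangle \cong A_{\geq 1}/A_{\geq 2}$. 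Applying $\grHom_A(-,\nabla_0(\mu)\langle 1\rangle)$ to $0\to A_{\geq 2}\to A_{\geq 1}\to A_{\geq 1}/A_{\geq 2}\to 0$, the left flank vanishes for grade reasons ($A_{\geq 2}$ lives in degrees $\geq 2$, while the target is pure of degree $1$), and the right flank $\grExt^1_A(A_{\geq 1},\nabla_0(\mu)\langle 1\rangle)\cong\grExt^2_A(A_0,\nabla_0(\mu)\langle 1\rangle)$ (via $0\to A_{\geq 1}\to A\to A_0\to 0$ and projectivity of $A$) vanishes by d\'evissage along the $\Delta^0$-filtration of $A_0$, since 2-Q-Koszul forbids $\grExt^2_A(\Delta^0(\lambda),\nabla_0(\mu)\langle r\rangle)$ for $r\neq 2$ and here $r=1$.

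For (b), I first establish the auxiliary lemma: \emph{if $M\in\Agrmod$ is concentrated in degrees $\geq d$ and $\grHom_A(M,\nabla_0(\mu)\langle m\rangle)=0$ for all $\mu\in\Lambda$ and all $m>d$, then $M$ is generated as a left $A$-module by $M_d$.} Indeed, the socle inclusion $L(\mu)\hookrightarrow\nabla_0(\mu)$ forces $\grHom_A(M,L(\mu)\langle m\rangle)=0$ for $m>d$, so $M/A\cdot M_d$, concentrated in degrees $>d$ and having no simple quotient, must vanish. Applying this to $M=K:=\ker(A\otimes_{A_0}A_1\to A_{\geq 1})$ with $d=2$: setting $N=\nabla_0(\mu)\langle m\rangle$ for $m>2$, the long exact sequence from $0\to K\to A\otimes_{A_0}A_1\to A_{\geq 1}\to 0$ yields $\grHom_A(K,N)=0$ from three vanishings: (i) $\grHom_A(A_{\geq 1},N)=0$ for $m>1$, already noted inside the proof of Theorem~\ref{firstprop}; (ii) the tensor-Hom adjunction $\grHom_A(A\otimes_{A_0}A_1,N)\cong\Hom_{A_0}(A_1,N_1)$, whose right side is zero unless $m=1$; and (iii) $\grExt^1_A(A_{\geq 1},N)\cong\grExt^2_A(A_0,N)=0$ for $m\neq 2$, by the same d\'evissage as in (c). Hence $K=A\cdot W_2$, which is the left-module version of (b).

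For (a), tightness of $A$ (Theorem~\ref{firstprop}) provides a surjection $\pi\colon T_{A_0}(A_1)\twoheadrightarrow A$ with kernel $L$, a graded two-sided ideal satisfying $L_2=W_2$; I must show $L$ coincides with the two-sided ideal $\langle W_2\rangle$. Applying $-\otimes_{A_0}A_1$ (right exact) to $0\to L\to T_{A_0}(A_1)\to A\to 0$ and using the canonical isomorphism $T_{A_0}(A_1)\otimes_{A_0}A_1\cong T_{A_0}(A_1)_{\geq 1}$ (coming from the freeness of the tensor algebra), the piece in degree $n+1$ admits the identity $L_{n+1}=L_n\cdot A_1+T_{A_0}(A_1)_{n-1}\cdot W_2$, where the second summand is furnished by (b). An induction on $n$ starting from $L_2=W_2$ places $L$ inside $\langle W_2\rangle$; the reverse inclusion is immediate. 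The main technical obstacle is the triple vanishing of $\grHom_A(K,N)$ used in (b), as it is the only place where the full force of the 2-Q-Koszul hypothesis is needed, combining the $\grExt^2_A(A_0,-)$ d\'evissage with the tensor-Hom adjunction.
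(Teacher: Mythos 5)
Your proof is correct, and its overall architecture — prove (c) first, establish that the relation module is generated in grade~$2$ (the ``Claim''), and then run a degree-wise induction for (a) — matches the paper's. The novelty is in how you establish the Claim. The paper compares two partial resolutions of $A_0$: the one coming from $A\otimes_{A_0}A_1$ and a genuinely projective one coming from $A\otimes_{A_0}P$, where $P$ is an $A_0$-projective cover of $A_1$; it then identifies $\grExt^2_A(A_0,\nabla_0(\lambda)\langle r\rangle)$ with cocycles modulo coboundaries for the projective resolution and produces the injection $\grHom_A(W,\nabla_0(\lambda)\langle r\rangle)\hookrightarrow\grExt^2_A(A_0,\nabla_0(\lambda)\langle r\rangle)$ by chasing the comparison diagram. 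You sidestep all of that: applying the long exact sequence of $\grHom_A(-,\nabla_0(\mu)\langle m\rangle)$ directly to $0\to K\to A\otimes_{A_0}A_1\to A_{\geq 1}\to 0$ needs no projectivity of $A\otimes_{A_0}A_1$, and the tensor-Hom adjunction $\grHom_A(A\otimes_{A_0}A_1,\nabla_0(\mu)\langle m\rangle)\cong\Hom_{A_0}(A_1,\nabla_0(\mu)\langle m\rangle_1)$ kills the middle term for degree reasons alone whenever $m\neq 1$, leaving the same injection $\grHom_A(K,N)\hookrightarrow\grExt^1_A(A_{\geq 1},N)\cong\grExt^2_A(A_0,N)$ without ever introducing $P$ or $\widehat W$. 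This is a genuine simplification of the paper's argument and worth keeping. Your auxiliary lemma (a degree-$\geq d$ graded module with $\grHom_A(-,\nabla_0(\mu)\langle m\rangle)=0$ for $m>d$ is generated in degree $d$) is exactly the reasoning the paper runs in-place at the end of its Claim, just isolated cleanly.

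Part (c) is essentially identical to the paper: you use the short exact sequence $0\to A_{\geq 2}\to A_{\geq 1}\to A_1\langle 1\rangle\to 0$, with the left flank dying for grade reasons and the right flank identified with $\grExt^2_A(A_0,\nabla_0(\mu)\langle 1\rangle)=0$ via $0\to A_{\geq 1}\to A\to A_0\to 0$; then Theorem~\ref{derived}(a) converts the graded $\grExt^1_A$ vanishing into the $\Ext^1_{A_0}$ vanishing required for the $\Delta^0$-filtration. Part (a) is also the same idea as the paper's diagram (\ref{cdiagram}): you apply the right-exact functor $-\otimes_{A_0}A_1$ to $0\to L\to T_{A_0}(A_1)\to A\to 0$, identify $T_{A_0}(A_1)\otimes_{A_0}A_1$ with $T_{A_0}(A_1)_{\geq 1}$, and read off the recursion $L_{n+1}=L_nA_1+T_{n-1}W_2$ from the resulting commutative square and part (b); the paper's footnote about the right exactness of $-\otimes A_1$ encodes the same observation. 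The only (minor) thing worth spelling out in a final write-up is the identification $L/(LA_1)\cong K$ as a graded $A$-module, which underlies your recursion: the map $T_{\geq 1}\to A\otimes_{A_0}A_1$ has kernel $LA_1$ and the composite $T_{\geq 1}\to A_{\geq 1}$ has kernel $L$, so $L/(LA_1)$ identifies with $\ker(A\otimes_{A_0}A_1\to A_{\geq 1})=K=AW_2$, and lifting a generating set of $K_{n+1}=A_{n-1}W_2$ along the surjection $T_{n-1}W_2\twoheadrightarrow A_{n-1}W_2$ gives exactly the asserted identity. With that spelled out, your proof is complete.
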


\begin{proof} We first prove (c). The long exact sequence of $\grExt_A^\bullet(-,\nabla_0(\mu)\langle r\rangle)$ for the exact sequence
$0\to A_{\geq 1}\to A\to A_0\to 0$ in $\Agrmod$  gives
\begin{equation}\label{eq1} \grExt^1_A(A_{\geq 1},\nabla_0(\mu)\langle1\rangle)\cong\grExt^2_A(A_0,\nabla_0(\mu)\langle 1\rangle)=0.\end{equation}
(The term on the right is 0 since, by hypothesis, $A$ is 2-Q-Koszul.) 

Next, again using the long exact sequence of
$\grExt^\bullet_A(-,\nabla_0(\mu)\langle r\rangle)$ for the exact sequence $0\to A_{>1}\to A_{\geq 1} \to A_1\to 0$ gives
an exact sequence
$$\begin{aligned}
 \grHom_A(A_{>1},\nabla_0(\mu)\langle1\rangle) &\to\grExt^1_A(A_1,\nabla_0(\mu)\langle 1\rangle)\\
&\to \grExt^1_A(A_{\geq 1},\nabla_0(\mu)\langle 1\rangle)\to\grExt^1_A(A_{>1},\nabla_0(\mu)\langle 1\rangle).
\end{aligned}$$
Obviously, the left hand end of the above exact sequence vanishes, while, from (\ref{earlyremark}), the right hand end 
is also
0. Thus, using (\ref{eq1}), 
\[0=\grExt^1_A(A_{\geq 1},\nabla_0(\mu)\langle 1\rangle)
\cong\grExt^1_{A}(A_1,\nabla_0(\mu)\langle 1\rangle).\]
Theorem \ref{derived}(a), applied to $\grExt^1_A(A_1\langle -1\rangle,\nabla_0(\mu))=\grExt^1_A(A_1,\nabla_0(\mu)\langle 1\rangle)=0$, gives (using (\ref{gradedExt}))
\[\Ext^1_{A_0}(A_1,\nabla_0(\mu))=0,\quad\forall\mu\in\Lambda.\]
 This means that $A_1$ has
a $\Delta^0$-filtration, as required by (c).

Next, we prove (a) and (b). By Theorem \ref{firstprop}, there is an exact sequence
\begin{equation}\label{seq}
0\to W\to A\otimes_{A_0}A_1\overset\phi\to A_{\geq 1}\to 0\end{equation}
of graded left $A$-modules.
Since $\phi$ is an isomorphism in grade 1, the graded $A$-module $W$ is concentrated in
grades $\geq 2$. The following claim is needed for the proof of (a); applied together with its analog for $A^{\text{\rm op}}$, it also gives (b) immediately.

\medskip\noindent
\underline{Claim:} $W$ is generated in grade 2 as a left $A$-module, i.~e., $W=AW_2$.
\medskip

Before proving the Claim, we show that it implies (a), that is, that $A$ is quadratic. By construction, $W_2\subseteq A_1\otimes A_1\subseteq T:=T_{A_0}(A_1)=\bigoplus_{s\geq 0}A_1^{\otimes s}$, the tensor algebra of $A_1$ over $A_0$. (Thus, 
$\otimes:=\otimes_{A_0}$ in this proof.)
Here $A_1^{\otimes 0}:=A_0$. Let $I$ be the kernel of the evident algebra surjection $\alpha=\bigoplus_s\alpha_s:T\twoheadrightarrow A$.
It suffices to prove that $I=\langle W_2\rangle$ (the ideal in $T$ generated by $W_2\subseteq T$). Obviously, $I_0=0=I_1$ and $I_2=W_2$. Let $s>2$ be an
integer, and assume by induction that $I_{s-1}=\langle W_2\rangle_{s-1}$. Thus,  
\begin{equation}\label{cdiagram0}\langle W_2\rangle_{s-1}A_1=I_{s-1} A_1=\ker\left(A_1^{\otimes{s-1}}\otimes A_1=T_{s-1}\otimes A_1\overset{\alpha_{s-1}\otimes A_1}\longrightarrow A_{s-1}\otimes A_1\right),\end{equation}
where the two left hand products are taken in $T$.\footnote{Of course, $I_{s-1}\subseteq A_1^{\otimes(s-1)}$, though
there may not be an inclusion of $I_{s-1}\otimes A_1$ into $A_1^{(s-1)}\otimes A_1$ because the functor
 $-\otimes A_1$ might possibly be only right exact. Nevertheless, it makes sense to form the product $I_{s-1}A_1$ in $T$. This
product is the same as the image of the map $I_{s-1}\otimes A_1\to A_1^{\otimes(s-1)}\otimes A_1=T_s$. The right
exactness of $-\otimes A_1$ then gives (\ref{cdiagram0}).}
 We need to show that $I_s=\langle W_2\rangle_s$, or, equivalently, $I_s\subseteq \langle W_2\rangle_s$.  Consider the commutative diagram
\begin{equation}\label{cdiagram}\begin{CD} T_s @<<\sim< T_{s-1}\otimes A_1 @<<\sim< T_{s-2}\otimes A_1\otimes A_1\\
@VV{\alpha_s}V @VV{\alpha_{s-1}\otimes A_1}V  @VV{\beta_s}V \\
A_s@<<<  A_{s-1}\otimes A_1 @=  A_{s-2}(A_1\otimes A_1)
\end{CD}
\end{equation}
where each map in the top row is induced by multiplication in $T$.\footnote{In some sense, these multiplication maps
are just equalities, but it is useful in the proof to keep them separate.}  Also, $\beta_s$ is the composite of the
surjective map
$\alpha_{s-2}\otimes A_1\otimes A_1$ with module multiplication of $A_{s-2}$ on $A_1\otimes A_1\subseteq
A\otimes A_1$. 
Let
$x\in I_s\subseteq T_s$, and let $x'$ be the corresponding element in $A_1^{\otimes (s-1)}\otimes A_1=T_{s-1}\otimes A_1$.  Then $(\alpha_{s-1}\otimes A_1)(x') $
maps to 0 under the multiplication map $A_{s-1}\otimes A_1\to A$, since $x\in I_s$. Thus, $(\alpha_{s-1}\otimes
A_1)(x')\in W_s$, which equals $A_{s-2}W_2$ by the Claim. Since $\alpha_{s-2}$ is surjective, there is
an element $y^\sharp\in A_1^{\otimes (s-2)}W_2=T_{s-2}W_2\subseteq T_{s-2}\otimes A_1\otimes A_1$
with image $(\alpha_{s-1}\otimes A_1)(x')\in W_s=A_{s-2}W_2$ under the map $\beta_s$. Let $y'\in T_{s-1}\otimes A_1$ and $ y\in T_s$ correspond to
$y^\sharp$ in the top row of (\ref{cdiagram}). The commutativity of (\ref{cdiagram}) 
gives 
$(\alpha_{s-1}\otimes A_1)(y')=(\alpha_{s-1}\otimes A_1)(x')$.
Then $x'-y'\in \ker(\alpha_{s-1}\otimes A_1)$, which by induction is $\langle W_2\rangle_{s-1}\ A_1$, so $x-y\in
\langle W_2\rangle_{s-1}A_1\subseteq T_s$ in (\ref{cdiagram0}).  By construction, $y'$ and $y$ belong
to $\langle W_2\rangle$. So $x\in\langle W_2\rangle$. This completes the inductive step $I_s=\langle W_2\rangle_s$.
Thus, $I=\langle W_2\rangle$, and $A$ is quadratic.

It remains to check the Claim (which will also prove (b)). First, there is an injection 
\[\label{injection}\hom_A(W, \nabla_0(\lambda)\langle r\rangle)\hookrightarrow\grExt_A^2(A_0,\nabla_0(\lambda)\langle r\rangle),\quad\forall r\in\mathbb N.\]
To see this, let $P$ be an $A_0$-projective cover of $A_1$, viewed as graded and concentrated in grade 1.
Then $A\otimes P=A\otimes_{A_0}P$ is a graded projective $A$-module, equipped with a map 
$A\otimes P\twoheadrightarrow A_{\geq1}$ (surjective, by Theorem \ref{firstprop}) sending $1\otimes P$ to $A_1$. Let $\widehat W$ be the kernel of this map. There is a commutative
diagram
\begin{equation}\label{contain} \begin{CD} 
0 @>>>\widehat W@>>> A\otimes P @>>> A  @>>> A_0 @>>> 0\\
 @.
@VVV @VVV @VVV @| @.\\
0 @>>> W @>>> A\otimes A_1 @>>> A @>>> A_0 @>>> 0,\end{CD}
\end{equation}
in which each row is exact and the vertical maps are the evident surjections.  The top row can be used to 
compute $\grExt^2_A(A_0,\nabla_0(\lambda)\langle r\rangle )$ in the usual cocycles/coboundaries
way. 
The 2-cocycles are elements of $\grHom_A(\widehat W,\nabla_0(\lambda)\langle r\rangle)$, and
the image in it of $\grHom_A(A\otimes P,\nabla_0(\lambda)\langle r\rangle)$ is the space of 2-coboundaries.
All elements of $\hom_A(A\otimes P,\nabla_0(\lambda)\langle r\rangle)$ are zero,
unless $r=1$, since $P$ is concentrated in grade 1.   But $\hom_A(W,\nabla_0(\lambda)\langle r\rangle)
=0$ when $r=1$ (since $W_m=0$ for $m\leq 1$). The composite of the maps
\begin{equation}\label{composite} \hom_A(W,\nabla_0(\lambda)\langle r\rangle)\to\hom_A(\widehat W,\nabla_0(\lambda)\langle r\rangle)
\to\grExt^2_A(A_0,\nabla_0(\lambda)\langle r\rangle)\end{equation}
is an injection in all cases. 

Observe that $W/AW_2$ is a (positively) graded $A$-modules, vanishing
in grades $\leq 2$. If $W/AW_2\not=0$, we have $(W/AW_2)_s\not=0$ for some minimal integer $s$.
Necessarily $s>2$. Also, 
$(W/AW_2)_s=(AW_2+W_{\geq s})/(AW_2+W_{>s})$ is an $A$-module killed by $A_1$. Choose
any irreducible graded $A_0=A/A_{\geq 1}$-module $L(\lambda)\langle s\rangle$  in the head of $(W/AW_2)_s$. Then $\hom_A(W,\nabla_0(\lambda)\langle s\rangle)\not=0$, so
 $\grExt^2_A(A_0,\nabla_0(\lambda)\langle s\rangle)\not=0$ by (\ref{composite}). Since $s>2$, this contradicts the hypothesis is 2-Q-Koszul
. This contradiction shows that $W=AW_2$ and completes the proof of the Claim. Thus, (a) and (b) are
 now proved, as well as the theorem.
  \end{proof}
 
If $V$ is an $A_0$-module, let $\cdots\to P^1\to P^0\twoheadrightarrow V$ be a minimal projective resolution
in the category of $A_0$-modules. Recall, for any $m>0$,  the $m$th syzygy module $\Omega_m(V)$, for the
algebra $A_0$, is the
kernel of the map $P^{m-1}\to P^{m-2}$ (setting $P^{-1}:=V$). By convention, $\Omega_0(V)=V$.
 
\begin{lem}\label{biglemma}
Assume that $A$ is $(n+1)$-Q-Koszul for some integer $n>0$. Let $s,r$ be non-negative integers with $s\leq r$
and $0<s\leq n$.

(a) Then $\Ext^r_{A_0}(A_s,\nabla_0(\gamma))=0$
for all $\gamma\in\Lambda$. In particular, 
 the
$A_0$-modules
$$\Omega_{r-1}(A_s), \Omega_r(A_s), \Omega_{r+1}(A_s), \cdots $$
all have $\Delta^0$-filtrations. Consequently (using the case $r=s$), the $A_0$-modules
$$\Omega_{s-1}(A_s), \Omega_s(A_s), \Omega_{s+1}(A_s), \cdots$$
all have $\Delta^0$-filtrations.
 
(b) Assume also $r\leq n+1$, and let $m$ be any integer such $m\leq r$. Then 
\begin{equation}\label{vanishing2} \grExt_{A}^{r}(A_{s},\nabla_{0}(\nu)\langle m\rangle)=0,\quad\forall\nu\in\Lambda.\end{equation} 

\end{lem}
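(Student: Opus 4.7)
\medskip
\noindent\textbf{Proof plan.} The plan is to prove (b) by induction on $s$, treating (a) as a consequence. The bridge is Theorem \ref{derived}(a): since $A$ is $(n+1)$-Q-Koszul, for every $r \leq n+1$ we have a canonical isomorphism
$$\Ext^r_{A_0}(A_s,\nabla_0(\gamma)) \;\cong\; \grExt^r_A(A_s,\nabla_0(\gamma))$$
at grade-shift $0$. So the $m=0$ instance of (b) immediately delivers (a) in the range $s \leq r \leq n+1$. The case $r=s$ then says $\Omega_{s-1}(A_s)$ has a $\Delta^0$-filtration, and a standard projective-cover argument (if $\Omega_r(A_s)$ has a $\Delta^0$-filtration, so does $\Omega_{r+1}(A_s)$, because $\Ext^1_{A_0}(\Omega_{r+1}(A_s),\nabla_0) \cong \Ext^2_{A_0}(\Omega_r(A_s),\nabla_0) = 0$) propagates the filtration to all higher $\Omega_r(A_s)$ and, by dimension-shifting, extends the vanishing $\Ext^r_{A_0}(A_s,\nabla_0)=0$ to every $r \geq s$.

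The heart of the argument is (b), proved by induction on $s$. For the base case $s=1$, Theorem \ref{thirdprop}(c) says $A_1$ already carries a $\Delta^0$-filtration as a left $A_0$-module, so the $(n+1)$-Q-Koszul hypothesis combined with long exact sequences over the filtration yields $\grExt^r_A(A_1,\nabla_0(\nu)\langle m\rangle)=0$ whenever $m \neq r$ and $0<r\leq n+1$. The delicate ``Koszul-degree'' case $m=r$ is handled by the indirect mechanism from the proof of Theorem \ref{thirdprop}(c): the sequence $0 \to A_{\geq 1} \to A \to A_0 \to 0$, together with the projectivity of $A$ in $\Agrmod$, gives $\grExt^r_A(A_{\geq 1},\nabla_0\langle m\rangle) \cong \grExt^{r+1}_A(A_0,\nabla_0\langle m\rangle)$, and the right-hand side vanishes by Q-Koszul (applied to the $\Delta^0$-filtration of $A_0$) whenever $m \neq r+1$, in particular for $m \leq r$. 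Chasing this conclusion through $0 \to A_{>1} \to A_{\geq 1} \to A_1\langle 1\rangle \to 0$, with the end-terms coming from $A_{>1}$ disposed of by (\ref{earlyremark}), produces the base case. For the inductive step $s \geq 2$, I would exploit the tight/quadratic structure from Theorems \ref{firstprop} and \ref{thirdprop}: the surjection $A_1 \otimes_{A_0} A_{s-1} \twoheadrightarrow A_s$ has a kernel controllable by the relations module $W_2$, and the inductive hypothesis applied to $A_{s-1}$ together with the same two-stage long-exact-sequence trick (pass to $A_{\geq s}$, then to $A_{\geq s-1}, \ldots, A_{\geq 1}$, finally to the projective $A$) reduces every relevant graded $\Ext$ to $\grExt^{r+s}_A(A_0,\nabla_0(\nu)\langle m+s\rangle)$, which vanishes by Q-Koszul once $(r+s)-(m+s) = r-m \neq 0$ and the total homological degree stays within $n+1$.

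The main obstacle is precisely the borderline case $m=r$. For a general $\Delta^0$-filtered module, Q-Koszul allows $\grExt^r_A(-,\nabla_0\langle r\rangle)$ to be nonzero at the Koszul degree, so the claimed vanishing for $A_s$ cannot follow from any filtration property of $A_s$ alone; it must be extracted from the fact that $A_s$ sits as a homogeneous subquotient of the projective module $A$, whose total Ext vanishes. Translating this geometric fact into cohomological vanishing requires a careful bookkeeping of the shift in homological and internal degree as one unwinds the filtration $A \supset A_{\geq 1} \supset A_{\geq 2} \supset \cdots$, and checking that the surviving terms at each stage lie in a range where the $(n+1)$-Q-Koszul condition applies (this is the origin of the upper bound $r \leq n+1$ in the hypothesis of (b)). The inductive step for $s \geq 2$ compounds this difficulty, since the auxiliary modules $A_{>s}$ also require Ext-vanishing inputs that are not immediate from (\ref{earlyremark}) once $m$ grows.
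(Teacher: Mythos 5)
Your plan of inducting on $s$ has a genuine gap, and along the way there are several misreadings of where the difficulty actually lies.

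First, the base case $s=1$ is actually easier than you make it. Since $A_1$ sits in \emph{grade $1$} as a graded $A$-module, and by Theorem \ref{thirdprop}(c) has a $\Delta^0$-filtration, the $(n{+}1)$-Q-Koszul condition forces $\grExt^r_A(A_1,\nabla_0(\nu)\langle m\rangle)\neq 0\implies m-1=r$, i.e.\ $m=r+1$. For $m\leq r$ this is impossible, so no ``indirect mechanism'' is needed for $m=r$; the grade shift by $1$ already clears the Koszul degree. Your stated vanishing condition ``whenever $m\neq r$'' is off by this shift.

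The real obstacle, which you identify but do not overcome, is that $A_s$ for $s\geq 2$ need \emph{not} have a $\Delta^0$-filtration (Remark 2.7; Turner's example). So no filtration property of $A_s$ alone gives (b), and you cannot treat $A_s$ like $A_1$ in the inductive step. Your proposed fix has two problems. (i) The surjection $A_1\otimes_{A_0}A_{s-1}\twoheadrightarrow A_s$ with kernel controlled by $W_2$ is a sequence of $(A_0,A_0^{\op})$-bimodules, not of graded $A$-modules; it does not feed into a long exact sequence for $\grExt^\bullet_A(-,\nabla_0(\nu)\langle m\rangle)$. (ii) The proposed cascade through $A\supset A_{\geq1}\supset\cdots\supset A_{\geq s}$ does not ``reduce every relevant graded $\Ext$ to $\grExt^{r+s}_A(A_0,\nabla_0(\nu)\langle m+s\rangle)$.'' Each short exact sequence $0\to A_{\geq i+1}\to A_{\geq i}\to A_i\to 0$ produces a long exact sequence in which the $A_i$-terms (for $2\leq i<s$) are exactly the unknowns you are trying to control; these are not disposed of by (\ref{earlyremark}) once the internal degree $m$ is in the problematic range, and for $i\geq 2$ you cannot quote any filtration property to kill them. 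The iteration therefore does not collapse.

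The paper's route is structurally different and supplies the missing device: induct on $n$ (the Q-Koszul parameter), not on $s$. The inductive hypothesis grants that $\Omega_{s-1}(A_s)$ already has a $\Delta^0$-filtration for $0<s<n$, because part (a) for the smaller $n$ applies. One then regards $\Omega_{s-1}(A_s)$ as pure grade $0$ via $\pi:A\to A_0$, takes its minimal $A_0$-projective resolution ending at $A_s\langle -s\rangle$, and dimension-shifts \emph{inside} $\Agrmod$ through the projective $A_0$-modules (each $\Delta^0$-filtered, each killed in the relevant degree window by $(n{+}1)$-Q-Koszulity). This produces the isomorphism
$$\grExt_{A}^{r}(A_{s},\nabla_{0}(\nu)\langle m\rangle ) \cong \grExt_{A}^{r-s+1}(\Omega_{s-1}(A_s),\nabla_{0}(\nu)\langle m-s\rangle),$$
whose right-hand side vanishes since $r-s+1\neq m-s$. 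The endgame case $s=n$ is then handled by the chain $A\supset A_{\geq 1}\supset\cdots$, but \emph{only} at the single edge value $r=n+1$, $m=n$, where all the lower $A_i$ terms ($0\leq i<n$) have already been killed by the displayed isomorphism and by quasi-heredity of $A_0$. Your plan has no substitute for the crucial syzygy step, and without it the vanishings you need at each stage of the proposed filtration chase are simply not available.
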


\begin{proof} Before beginning the proof, we observe an additional consequence (c) of the hypotheses of the
lemma. It is obvious, since
$A_0$ has a $\Delta^0$-filtration (as a left $A_0$-module) by standard properties of QHAs.

\smallskip
(c) Let $r$ be a nonnegative integer $\leq n+1$. Then (\ref{vanishing2}) holds for
$s=0$ and all $m\not=r$. 
\smallskip

 In order to prove (a) for a given $s>0$, it is sufficient to prove the shorter statement:

\smallskip\noindent $(a')\quad \Ext^s_{A_0}(A_s,\nabla_0(\nu))=0,\quad \forall \nu\in\Lambda.$

\smallskip
 In fact, assume that (a$^\prime$) holds for $s$. Dimension shifting gives
$\Ext_{A_0}^1(\Omega _{s-1}(A_s),\nabla_0(\nu))=0$, for all $\nu\in\Lambda$. Thus,  $\Omega_{s-1}(A_{s})$ has a $\Delta^{0}$
-filtration. This also implies the higher syzygies $\Omega_{s}(A_{s}),$ $\Omega_{s+1}(A_s),$, \dots
have $\Delta^{0}$-filtrations, or, equivalently, 
$\Ext_{A_{0}}^{r}(A_s,\nabla_{0}(\nu))=0$ for all $\nu\in\Lambda$ and $r\geq s$. This
completes the proof that  $(a') \implies (a)$, for any given $s>0$. The opposite implication, which is not used below, is obvious.

We prove part (a) by induction on $n$. The main isomorphism we develop will also help to prove part (b). Thus, if $0< s<n$, 
(a) holds as written for $s$ by induction. In particular, 
 the $A_0$-syzygy $\Omega_{s-1}(A_{s})$ 
has a $\Delta^{0}$-filtration. Once part (a) has
been proved for $n$, we can also allow $s=n$ in this statement, i.~e., we will be able to conclude
that $\Omega_{n-1}(A_n)$ has a
$\Delta^0$-filtration. 

Using the map $\pi:A\to A_0$, regard  $X:=\Omega_{s-1}(A_s)$ as an object in $\Agrmod$ which is pure of grade 0. 
Thus, $X$ is the end term in an evident partial resolution of 
$A_{s}\langle-s\rangle$, the latter viewed as a purely graded $A$-module 
$(A_{\geq s}/A_{>s})$ $\langle-s\rangle$ of grade $0$. The intermediate terms
in this partial resolution are projective $A_{0}$-modules $P$, viewed as
purely graded $A$-modules of grade $0$. With the latter interpretation of
$P$, we have, for any integer $m$, $\grExt_{A}^{r}(P,\nabla_{0}(\nu)\langle m-s\rangle)=0$
whenever $n+1\geq r>m-s$, since $P$ has a $\Delta^{0}$-filtration, and we
have assumed the $(n+1)$-Q-Koszul property. This vanishing may be used to iteratively
dimension shift, starting with $r\leq n+1$ and ending with $r-(s-1)$,
provided $r-(s-1)>m-s$ and $r-(s-1)>0$. Equivalently, $r\geq m$ and $r\geq s$.  This gives the lower
isomorphism below, for $m\leq r\leq n+1$, $s\leq r$, and
$0<s<n$,  
\begin{equation}\label{displayed}\begin{aligned}
\grExt_{A}^{r}(A_{s},\nabla_{0}(\nu)\langle m\rangle ) &\cong
\grExt_{A}^{r}(A_{s}\langle-s\rangle,\nabla_{0}(\nu)\langle
m-s\rangle) \\ &\cong\grExt_{A}^{r-s+1}(X,\nabla_{0}(\nu)\langle
m-s\rangle).\end{aligned}
\end{equation}
The lower term is $0$ in all these
cases, since $X$ has a $\Delta^{0}$-filtration, and $r-s+1\neq m-s$.  In particular, these isomorphisms and vanishings hold for $r=n+1$ and $m=n$, for any 
positive integer $s<n$. Also,
with the (same) values $r=n+1$ and $m=n$, we have the additional vanishing
$\grExt_{A}^{r}(A_{s},\nabla_{0}(\nu)\langle m\rangle)= 0$ when $s=0$, by (c) above. Consequently, noting that the graded quotient $A_{<n}$ of the graded
$A
$-module $A$ is filtered by the graded $A$-modules $A_{s}$, $0\leq s<n$, we
have (as in the proof of Theorem \ref{thirdprop}(c))
\begin{equation*}
\grExt_{A}^{n}(A_{n},\nabla_{0}(\nu)\langle n\rangle )\cong
\text{\textrm{ext}}_{A}^{n}(A_{\geq n},\nabla_{0}(\nu)\langle
n\rangle)\cong
\grExt_{A}^{n+1}(A_{<n},\nabla_{0}(\nu)\langle n\rangle)=0.
\end{equation*}
 Hence, ${\text{\textrm{Ext}}}_{A_{0}}^{n}(A_{n},\nabla_{0}(\nu))=0$ by
Theorem \ref{derived}. This proves (a$^\prime$) and, thus, statement (a) for $s=n$, completing the inductive
step. This proves (a). 

In particular, we can now use the case $s=n$ in the above
discussions.  The displayed isomorphisms (\ref{displayed}), now allowing $s=n$ as well, give all the vanishings required by part (b).
This completes the proof.
\end{proof}

Let $A$ is a QHA with weight poset $\Lambda$, and let $\Gamma$ be a non-empty ideal
in $\Lambda$.  For any nonempty poset ideal $\Gamma$  in $\Lambda$, let $A_\Gamma$ be the largest quotient
algebra of $A$ whose modules consist of all finite dimensional $A$-modules with composition factors $L(\gamma)$,
$\gamma\in\Gamma$. 

\begin{lem}\label{ideallemma} Let $m$ be a non-negative integer. Assume $A$ is an algebra that is both $m$-Q-Koszul and quasi-hereditary with weight poset $\Lambda$.
Let $\Gamma$ be a non-empty poset ideal in $\Lambda$. Then $A_\Gamma$ is also $m$-Q-Koszul and quasi-hereditary, with weight poset $\Gamma$.\end{lem}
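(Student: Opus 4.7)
\textit{Plan.} The statement that $A_\Gamma$ is quasi-hereditary with weight poset $\Gamma$ is the classical Cline--Parshall--Scott theory of poset ideals in a quasi-hereditary algebra, so the content of the lemma is really the $m$-Q-Koszul property of $A_\Gamma$. The plan is to transfer this property from $A$ via an Ext-group comparison.

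Since $A$ is $m$-Q-Koszul, $A_0$ is quasi-hereditary with weight poset $\Lambda$, so all primitive idempotents $e_\tau$, $\tau\in\Lambda$, lie in $A_0$. The defining ideal $J$ of the quotient $A_\Gamma=A/J$ is generated by $\{e_\tau:\tau\in\Lambda\setminus\Gamma\}\subseteq A_0$, and is therefore a graded ideal of $A$. Hence $A_\Gamma$ is positively graded with $(A_\Gamma)_0=A_0/J_0$, where $J_0:=J\cap A_0$. Standard QHA theory applied to $A_0$ then gives that $(A_\Gamma)_0=(A_0)_\Gamma$ is quasi-hereditary with poset $\Gamma$, and for $\lambda,\mu\in\Gamma$ the standard and costandard modules agree: $\Delta^0_{(A_\Gamma)_0}(\lambda)=\Delta^0_{A_0}(\lambda)$ and $\nabla_{0,(A_\Gamma)_0}(\mu)=\nabla_{0,A_0}(\mu)$.

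The crux is to establish the natural isomorphism
\[
\grExt^i_{A_\Gamma}\bigl(\Delta^0(\lambda),\nabla_0(\mu)\langle j\rangle\bigr)\;\cong\;\grExt^i_A\bigl(\Delta^0(\lambda),\nabla_0(\mu)\langle j\rangle\bigr)
\]
for all $\lambda,\mu\in\Gamma$, $i\geq 0$, and $j\in\mathbb Z$. Granting this, the $m$-Q-Koszul vanishing for $A$ forces the left-hand side to vanish whenever $i\neq j$ and $0<i\leq m$, which is exactly the $m$-Q-Koszul condition for $A_\Gamma$. The isomorphism itself will follow from the fact that $J$ is a heredity (hence stratifying) ideal of $A$, so that inflation induces a fully faithful embedding of bounded derived categories $D^b(A_\Gamma\text{\rm --grmod})\hookrightarrow D^b(A\text{\rm --grmod})$; equivalently, $\tor^A_q(A_\Gamma,M)=0$ for every $q>0$ and every $M\in A_\Gamma\text{\rm --grmod}$, which for a heredity ideal reflects the projectivity of the one-sided modules $Ae_\tau$ and $e_\tau A$.

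The main obstacle is verifying this Tor-vanishing, or equivalently the fully faithful derived embedding, in the graded context. A safer, more hands-on route is to proceed by induction on $|\Lambda\setminus\Gamma|$, reducing to the base case where one removes a single maximal $\tau\in\Lambda$ (so that $A_\Gamma=A/Ae_\tau A$). In that base case the classical recollement makes the Ext-comparison transparent---both $\Delta^0(\lambda)$ and $\nabla_0(\mu)$ are annihilated by $e_\tau$ for $\lambda,\mu\in\Gamma$, and the graded version of the argument is identical---and iterating across the weights of $\Lambda\setminus\Gamma$ delivers the full isomorphism, completing the proof.
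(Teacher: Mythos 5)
Your argument is correct and takes essentially the same route as the paper's: both identify $J=AeA$ with $e\in A_0$ so that $A\twoheadrightarrow A_\Gamma$ is a homomorphism of graded algebras, both identify the standard and costandard objects of $(A_\Gamma)_0$ with those of $A_0$, and both reduce the $m$-Q-Koszul condition for $A_\Gamma$ to the $\grExt$-comparison supplied by the recollement attached to a heredity ideal. Your discussion of $\Tor$-vanishing and the inductive fallback merely unpack what the paper compresses into ``standard recollement properties of QHAs.''
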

 
 \begin{proof} First, $A_\Gamma=A/J$ for some idempotent ideal $J$ in $A$. Then \cite{CPS1a} implies that
 $J=AeA$ for some $e\in A_0$. It follows that $A_\Gamma$ is positively graded, and the natural map
 $A\to A_\Gamma$ is a homomorphisms of graded algebras.  If $\gamma\in\Gamma$, $\Delta^0(\gamma)$
 is the standard object corresponding
  for $(A_\Gamma)_0$ to the weight $\gamma$. Similarly, $\nabla_0(\gamma)$ is the costandard object
  for $(A_\Gamma)_0$. Now the result follows from the naturally of (\ref{gradedExt}), together with standard recollement properties of QHAs.\end{proof}
  
  \begin{rem} Although it is not used in this paper, it can be easily shown that, in the notation of the above
  proof, the algebra $eAe$ is $m$-Q-Koszul and quasi-hereditary. The module category $eAe$-mod
  is equivalent to the quotient category of $\Amod$ by the subcategory which is strict image of $A/J$-mod in
  $\Amod$.  For a similar result, in the setting of 
  standard Q-Koszul algebras, see the ``recollement" discussion at the end of \S3.   \end{rem}
  
  If $A$ is a positively graded QHA with weight poset $\Lambda$, then each standard module $\Delta(\gamma)$
  can be graded $\Delta(\gamma)=\bigoplus_{n\geq 0}\Delta(\gamma)_n$, with $\Delta(\gamma)_0\cong
  \Delta^0(\gamma)$, the standard object for the QHA $A_0$. We have the following result.
 
 \begin{cor}\label{sec2cor} Let $A$ be 2-Q-Koszul and quasi-hereditary with weight poset $\Lambda$.
 
(a) 
 For $\gamma\in\Lambda$, $\Delta(\gamma)_1$ has a $\Delta^0$-filtration.
 
 (b) If $A$ is 3-Q-Koszul, then $\Ext^r_{A_0}(\Delta(\gamma)_2,\nabla_0(\nu))=0$ for all $r\geq 2$ and all
 $\nu\in\Lambda$. 
 
  More generally, assume that $A$ is $(n+1)$-Q-Koszul for some integer $n>0$, in addition to being a QHA, and let
  $s\leq r$ be nonnegative integers with $0<s\leq n$.  Then $\Ext^r_{A_0}(\Delta(\gamma)_s,\nabla_0(\nu))=0$
 for all $\gamma,\nu\in\Lambda$.
 
 In addition, the syzygy modules
 $\Omega_{s-1}(\Delta(\gamma)_s)$, $\Omega_s(\Delta(\gamma)_s),$ $\Omega_{s+1}(\Delta(\gamma)_s)$, \dots
 all have $\Delta^0$-filtrations.
 \end{cor}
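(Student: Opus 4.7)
The plan is to treat part (a) as the base case by a direct argument adapting Theorem \ref{thirdprop}(c), then to obtain the general statement by running the induction of Lemma \ref{biglemma} with the standard module $\Delta(\gamma)$ taking the role of $A$.

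For part (a), write $\Delta=\Delta(\gamma)$ and apply $\grExt^\bullet_A(-,\nabla_0(\nu)\langle 1\rangle)$ in turn to the exact sequences $0\to\Delta_{\geq 1}\to\Delta\to\Delta_0\to 0$ and $0\to\Delta_{>1}\to\Delta_{\geq 1}\to\Delta_1\to 0$, in the pattern of Theorem \ref{thirdprop}(c). In place of the projectivity of $A$ used there, the QHA vanishing $\grExt^1_A(\Delta,\nabla(\nu)\langle 1\rangle)=0$, combined via the sequence $0\to\nabla_{>0}(\nu)\to\nabla(\nu)\to\nabla_0(\nu)\to 0$ with the fact that $\Delta$ is generated in grade $0$ while $\nabla(\nu)\langle 1\rangle$ and $\nabla_{>0}(\nu)\langle 1\rangle$ both live in grades $\geq 1$, forces $\grExt^1_A(\Delta,\nabla_0(\nu)\langle 1\rangle)=0$. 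The $2$-Q-Koszul property kills $\grExt^2_A(\Delta^0(\gamma),\nabla_0(\nu)\langle 1\rangle)$ since $2\neq 1$, so the first sequence yields $\grExt^1_A(\Delta_{\geq 1},\nabla_0(\nu)\langle 1\rangle)=0$, and then the second sequence together with the grade-mismatch vanishing $\grHom_A(\Delta_{>1},\nabla_0(\nu)\langle 1\rangle)=0$ from (\ref{earlyremark}) gives $\grExt^1_A(\Delta_1,\nabla_0(\nu)\langle 1\rangle)=0$. Theorem \ref{derived}(a) then delivers $\Ext^1_{A_0}(\Delta(\gamma)_1,\nabla_0(\nu))=0$, hence the $\Delta^0$-filtration on $\Delta(\gamma)_1$.

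For the general claim, I would run the same induction on $n$ used in Lemma \ref{biglemma}, only with $\Delta$ substituted for $A$. The displayed isomorphism there, produced from a partial $A_0$-projective resolution of $\Delta_s\langle -s\rangle$ together with the $(n+1)$-Q-Koszul vanishings on $A_0$-projectives, transfers verbatim, so by dimension shifting everything reduces to the diagonal vanishing $\grExt^s_A(\Delta_s,\nabla_0(\nu)\langle s\rangle)=0$ for $0<s\leq n$, with $\Delta_s=\Delta_{\geq s}/\Delta_{>s}$ placed in grade $s$. The same chain as in Lemma \ref{biglemma}, namely
\[\grExt^s_A(\Delta_s,\nabla_0\langle s\rangle)\cong\grExt^s_A(\Delta_{\geq s},\nabla_0\langle s\rangle)\cong\grExt^{s+1}_A(\Delta_{<s},\nabla_0\langle s\rangle)=0,\]
then yields what is wanted; the first isomorphism comes from (\ref{earlyremark}) applied to $\Delta_{>s}$, and the final vanishing from filtering $\Delta_{<s}$ by the graded pieces $\Delta_j$ ($0\leq j<s$) and invoking the $(n+1)$-Q-Koszul property ($j=0$) together with the inductive hypothesis and the displayed isomorphism ($0<j<s$).

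The step that does not transfer mechanically is the middle isomorphism in the chain, which demands the analog of the projectivity input used in Lemma \ref{biglemma}, namely $\grExt^s_A(\Delta,\nabla_0(\nu)\langle s\rangle)=\grExt^{s+1}_A(\Delta,\nabla_0(\nu)\langle s\rangle)=0$. This is the main obstacle and I would attack it by a secondary induction on $\gamma\in\Lambda$ descending from maximal weights, where $\Delta(\gamma)=P(\gamma)$ is projective and both vanishings are automatic. For general $\gamma$, the exact sequence $0\to K(\gamma)\to P(\gamma)\to\Delta(\gamma)\to 0$ together with the graded $\Delta$-filtration of $K(\gamma)$ by $\Delta(\mu)\langle d\rangle$ with $\mu>\gamma$ and $d\geq 0$ reduces the required vanishings for $\Delta(\gamma)$ to analogous vanishings for $\Delta(\mu)$ at shifted grades; the low-shift contributions that the $\gamma$-induction does not itself cover would have to be absorbed using part (a) and the already established steps of the outer induction. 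Once the $r=s$ vanishing is secured, the full range $r\geq s$ and the syzygy filtration assertions follow by the standard dimension-shifting argument exhibited at the end of Lemma \ref{biglemma}.
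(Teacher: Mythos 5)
Your argument for part (a) is circular: the vanishing $\grExt^1_A(\Delta,\nabla_0(\nu)\langle 1\rangle)=0$ that you feed in as an input (writing $\Delta=\Delta(\gamma)$) is in fact equivalent to the conclusion. From the long exact sequence for $0\to\Delta_{\geq 1}\to\Delta\to\Delta_0\to 0$, using that $\grExt^i_A(\Delta_0,\nabla_0(\nu)\langle 1\rangle)=0$ for $i=1$ (Theorem~\ref{derived}(a) together with $\Ext^1_{A_0}(\Delta^0(\gamma),\nabla_0(\nu))=0$, since $\Delta_0\cong\Delta^0(\gamma)$) and for $i=2$ ($2$-Q-Koszulity, $2\neq 1$), one finds $\grExt^1_A(\Delta,\nabla_0(\nu)\langle 1\rangle)\cong\grExt^1_A(\Delta_{\geq 1},\nabla_0(\nu)\langle 1\rangle)\cong\grExt^1_A(\Delta_1,\nabla_0(\nu)\langle 1\rangle)\cong\Ext^1_{A_0}(\Delta_1,\nabla_0(\nu))$, the last two steps being (\ref{earlyremark}) and Theorem~\ref{derived}(a) again. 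So the premise you inject is exactly the conclusion you want. The heuristic offered---$\Delta$ generated in grade $0$ against a target in grades $\geq 2$---cannot on its own kill $\grExt^2$: unlike for $A$ itself, the graded minimal projective resolution of $\Delta$ is \emph{not} concentrated in increasing grades, because the kernel of $P(\gamma)\twoheadrightarrow\Delta$ already has a nonzero grade-$0$ part (it contains $\Omega_1(\Delta^0(\gamma))$, nonzero unless $A_0$ is semisimple), and this recurs at every syzygy step. That is precisely the distinction between the Q-Koszul and Koszul settings. (Also, the paper's convention puts $\nabla(\nu)=\bigoplus_{i\le 0}\nabla_i(\nu)$, so $\nabla_0(\nu)$ is a \emph{submodule} of $\nabla(\nu)$; there is no subobject $\nabla_{>0}(\nu)$.)

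You flag the same obstacle explicitly in the general statement and propose to resolve it by a descending induction on $\gamma$, but the closing sentence (``the low-shift contributions \ldots\ would have to be absorbed'') is where the actual work lies and is left unexplained. What the paper does instead is simpler and cleaner: apply Lemma~\ref{ideallemma}. For any nonempty poset ideal $\Gamma\subseteq\Lambda$, the quotient $A_\Gamma$ is again $(n+1)$-Q-Koszul and quasi-hereditary with poset $\Gamma$, and the standard modules for $(A_\Gamma)_0$ are the same $\Delta^0(\mu)$, $\mu\in\Gamma$. Choosing $\Gamma$ with $\gamma$ as its maximal element makes $\Delta(\gamma)$ a projective graded $A_\Gamma$-module, hence a direct summand of a finite free $A_\Gamma$-module, so $\Delta(\gamma)_s$ is a direct summand of $(A_\Gamma)_s^{\oplus m}$. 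Since direct summands of modules with $\Delta^0$-filtrations again have $\Delta^0$-filtrations, part (a) follows from Theorem~\ref{thirdprop}(c) applied to $A_\Gamma$, and the remaining assertions follow similarly from Lemma~\ref{biglemma} applied to $A_\Gamma$. This removes the need to establish any projectivity-like $\grExt$-vanishing for $\Delta(\gamma)$ with $\gamma$ non-maximal, which is exactly the gap your sketch leaves open.
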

 
 \begin{proof}  We first prove (a). If $\gamma\in\Lambda$ is maximal, then $\Delta(\gamma)$ is a projective
 graded $A$-module (with $\Delta(\gamma)_0$ identifying with the $A_0$-head of $\Delta(\gamma)$). Then,
 by Theorem \ref{thirdprop}(c), it follows that $\Delta(\gamma)_1$ has a $\Delta^0$-filtration. If $\gamma$
 is not maximal, we can choose an ideal $\Gamma$ which contains $\gamma$ as a maximal element. Part
 (a) reduces to the case in which $A$ is replaced by $A_\Gamma$, using Lemma \ref{ideallemma}, and (a)
 follows as above.
 
 Part (b) and the remaining paragraph are proved similarly.   \end{proof}

 We can now prove the following result. We assume that $A$ is a positively graded
 QHA which 2-Q-Koszul. As in Theorem \ref{thirdprop}, let $W_2$ be the kernel of the multiplication map $A_1\otimes_{A_0}A_1
 \to A_2$. An $A_0\otimes_kA_0^{\text{\rm op}}$-module (equivalently, and $(A_0,A_0^{\text{\rm op}}$)-bimodule) $M$ has, by definition, a $\Delta^0\otimes_k\Delta^{0,\text{\rm op}}$-filtration
if and only if has a submodule filtration with sections $\Delta^0(\lambda)\otimes_k\Delta^{0,\text{\rm op}}(\mu)$,
for $\lambda,\mu\in\Lambda$. For example, the algebra $A_0$, viewed as an $A_0\otimes_kA_0^{\text{\rm op}}$-module has
a filtration with sections $\Delta^0(\lambda)\otimes_k\Delta^{0,\text{\rm op}}(\lambda)$, $\lambda\in\Lambda$. It will
be useful to keep in mind that the tensor product $A\otimes_k B$ of QHAs $A$ and $B$ over $k$ is again
quasi-hereditary. If $\Lambda_A$ and $\Lambda_B$ are the posets of $A$ and $B$, then $\Lambda=\Lambda_A\times
\Lambda_B$ is the poset of $A\otimes_kB$, with $(\lambda,\lambda')\leq(\mu,\mu')$ if and only if $\lambda\leq \mu$
and $\lambda'\leq\mu'$. The standard (respectively, costandard) modules for $A\otimes_k B$ are tensor products of standard
(respectively, costandard) modules of $A$ with those of $B$. For more details, see \cite{Wied}.

\begin{thm}\label{tensortensor} Assume that $A$ is 2-Q-Koszul and that $A$ is a QHA. 

(a) Then the $(A_0,A_0^{\text{\rm op}})$-bimodule
$A_1$ has a $\Delta^0\otimes_k\Delta^{0,\text{\rm op}}$-filtration. Also, $A_1\otimes_{A_0}A_1$ has a
$\Delta^0\otimes_k\Delta^{0,\text{\rm op}}$-filtration.

(b) Now assume, in addition, that $A$ is 3-Q-Koszul.  Then 
$W_2$ (defined above) has a $\Delta^0\otimes_k \Delta^{0,\text{\rm op}}$-filtration.\end{thm}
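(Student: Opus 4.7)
The standard modules of the QHA $A_{0}\otimes_{k}A_{0}^{\op}$ are the external tensor products $\Delta^{0}(\lambda)\otimes_{k}\Delta^{0,\op}(\mu)$, and since $\nabla^{0,\op}(\mu)\cong\Delta^{0}(\mu)^{*}$ as left $A_{0}^{\op}$-module, the corresponding costandards may be identified with the bimodules $\Hom_{k}(\Delta^{0}(\mu),\nabla_{0}(\lambda))$. By the standard criterion for $\Delta$-good modules over a QHA, it suffices to verify
\[
\Ext^{1}_{A_{0}\otimes_{k}A_{0}^{\op}}(M,\nabla_{0}(\lambda)\otimes_{k}\nabla^{0,\op}(\mu))=0,\quad\forall\lambda,\mu\in\Lambda,
\]
taking $M$ in succession to be $A_{1}$, $A_{1}\otimes_{A_{0}}A_{1}$, and (in part (b)) $W_{2}$.

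For $M=A_{1}$, I would combine the Hom--tensor adjunction for bimodules, $\Hom_{A_{0}\otimes A_{0}^{\op}}(N,\Hom_{k}(X,Y))\cong\Hom_{A_{0}}(N\otimes_{A_{0}}X,Y)$, with its derived Grothendieck spectral sequence
\[
E_{2}^{p,q}=\Ext^{p}_{A_{0}}(\Tor_{q}^{A_{0}}(A_{1},\Delta^{0}(\mu)),\nabla_{0}(\lambda))\Longrightarrow\Ext^{p+q}_{A_{0}\otimes A_{0}^{\op}}(A_{1},\nabla_{0}(\lambda)\otimes_{k}\nabla^{0,\op}(\mu)).
\]
Since $A_{1}$ is $\Delta^{0,\op}$-filtered as a right $A_{0}$-module by Theorem \ref{thirdprop}(c), its $k$-dual is $\nabla_{0}$-filtered on the left, so standard QHA Ext-vanishing gives $\Ext^{q}_{A_{0}}(\Delta^{0}(\mu),A_{1}^{*})=0$ for $q>0$, and dualizing yields $\Tor_{q}^{A_{0}}(A_{1},\Delta^{0}(\mu))=0$ for $q>0$. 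The spectral sequence therefore collapses to an isomorphism
\[
\Ext^{1}_{A_{0}\otimes_{k}A_{0}^{\op}}(A_{1},\nabla_{0}(\lambda)\otimes_{k}\nabla^{0,\op}(\mu))\cong\Ext^{1}_{A_{0}}(A_{1}\otimes_{A_{0}}\Delta^{0}(\mu),\nabla_{0}(\lambda)),
\]
reducing the problem to showing that the left $A_{0}$-module $A_{1}\otimes_{A_{0}}\Delta^{0}(\mu)$ is $\Delta^{0}$-good.

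I would establish this crux by induction on $\mu$ along the partial order on $\Lambda$. For $\mu$ maximal, $\Delta^{0}(\mu)=P_{0}(\mu)=A_{0}e_{\mu}$, so $A_{1}\otimes_{A_{0}}\Delta^{0}(\mu)\cong A_{1}e_{\mu}$ is a direct summand of $A_{1}$ as a left $A_{0}$-module, and is $\Delta^{0}$-good by Theorem \ref{thirdprop}(c). For general $\mu$, apply $A_{1}\otimes_{A_{0}}(-)$ to the short exact sequence $0\to K\to P_{0}(\mu)\to\Delta^{0}(\mu)\to 0$, in which $K$ is $\Delta^{0}$-filtered with sections $\Delta^{0}(\nu)$ for $\nu>\mu$; the Tor-vanishing above preserves exactness, and combining the inductive hypothesis with the long exact sequence for $\Ext^{\bullet}_{A_{0}}(-,\nabla_{0}(\lambda))$ yields the required $\Ext^{1}$-vanishing. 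The analogous statement for $A_{1}\otimes_{A_{0}}A_{1}$ follows by iteration (the left-$\Delta^{0}$-filtration of the second $A_{1}$ combined with Tor-vanishing gives left $\Delta^{0}$-goodness, and the right-module version is symmetric), after which one re-runs the adjunction argument with $M=A_{1}\otimes_{A_{0}}A_{1}$ to obtain the bimodule filtration asserted in (a).

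For part (b), apply $\Ext^{\bullet}_{A_{0}\otimes A_{0}^{\op}}(-,\nabla_{0}(\lambda)\otimes_{k}\nabla^{0,\op}(\mu))$ to the bimodule short exact sequence $0\to W_{2}\to A_{1}\otimes_{A_{0}}A_{1}\to A_{2}\to 0$. Since (a) makes $A_{1}\otimes_{A_{0}}A_{1}$ a bimodule $\Delta^{0}\otimes_{k}\Delta^{0,\op}$-good module, all its higher Ext's into the target vanish, and the long exact sequence yields an injection $\Ext^{1}_{A_{0}\otimes A_{0}^{\op}}(W_{2},\nabla_{0}(\lambda)\otimes_{k}\nabla^{0,\op}(\mu))\hookrightarrow\Ext^{2}_{A_{0}\otimes A_{0}^{\op}}(A_{2},\nabla_{0}(\lambda)\otimes_{k}\nabla^{0,\op}(\mu))$. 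The 3-Q-Koszul hypothesis now enters through Lemma \ref{biglemma}, which supplies enough higher Ext-vanishing on $A_{2}$ to run an analogous adjunction/spectral-sequence reduction on $\Ext^{2}_{A_{0}\otimes A_{0}^{\op}}(A_{2},-)$ and force this term to vanish as well. The hardest step throughout is the crux induction in (a): short exact sequences of $\Delta^{0}$-good modules need not have $\Delta^{0}$-good quotients, so the passage to $A_{1}\otimes_{A_{0}}\Delta^{0}(\mu)$ demands a delicate dimension-shift that combines the inductive hypothesis with the higher-degree Ext-vanishings supplied by the 2-Q-Koszul property; in (b), an analogous subtlety arises because $A_{2}$ need not itself be $\Delta^{0,\op}$-good as a right $A_{0}$-module under only the 3-Q-Koszul hypothesis, so the Tor-vanishing step in the reduction has to be finessed.
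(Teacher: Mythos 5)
Your strategy is genuinely different from the paper's, but it has two gaps that you acknowledge without filling, and I do not think they can be filled along the lines you gesture at.

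The paper's proof starts from the structural fact that the algebra $A$ itself, viewed as an $A\otimes_k A^{\op}$-module, has a filtration with sections $\Delta(\lambda)\otimes_k\Delta^{\op}(\lambda)$. Restricting this filtration to grade $s$ and invoking the decomposition $(\Delta(\lambda)\otimes_k\Delta^{\op}(\lambda))_s\cong\bigoplus_{i+j=s}\Delta(\lambda)_i\otimes_k\Delta^{\op}(\lambda)_j$, together with Corollary \ref{sec2cor} (which says $\Delta(\lambda)_1$ is $\Delta^0$-good), gives the bimodule filtration of $A_1$ immediately. Part (b) then follows by a K\"unneth computation showing $\Ext^2_{A_0\otimes_k A_0^{\op}}(A_2,\nabla_0(\lambda)\otimes_k\nabla_{0,\op}(\mu))=0$ and the long exact sequence for $0\to W_2\to A_1\otimes_{A_0}A_1\to A_2\to 0$. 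Your route instead reduces, via the Hom--tensor adjunction spectral sequence, to showing that the \emph{one-sided} $A_0$-module $A_1\otimes_{A_0}\Delta^0(\mu)$ is $\Delta^0$-good, and then tries to prove that by downward induction on $\mu$. This is a legitimate alternative strategy in principle, but it bypasses the bimodule filtration of $A$, which is the paper's central input, and I don't see how to make your inductive step go through without smuggling that information back in.

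Concretely, the gap in (a) is this: from $0\to A_1\otimes_{A_0}K\to A_1e_\mu\to A_1\otimes_{A_0}\Delta^0(\mu)\to 0$ with the first two terms $\Delta^0$-good, the long exact sequence gives
\[
\Hom_{A_0}(A_1\otimes_{A_0}K,\nabla_0(\lambda))\twoheadrightarrow\Ext^1_{A_0}(A_1\otimes_{A_0}\Delta^0(\mu),\nabla_0(\lambda)),
\]
and all $\Ext^{\geq 2}$ vanish; but the $\Ext^1$ term is the one that matters for $\Delta^0$-goodness, and nothing you have written forces the connecting map into it to be zero. Quotients of $\Delta^0$-good modules by $\Delta^0$-good submodules are generically \emph{not} $\Delta^0$-good (already in the $2\times 2$ upper-triangular example), and the $2$-Q-Koszul hypothesis supplies vanishing only for graded $\Ext_A$-groups between $\Delta^0$'s and $\nabla_0$'s, not for $\Ext^1_{A_0}(A_1\otimes_{A_0}\Delta^0(\mu),\nabla_0(\lambda))$, which involves a module you have no independent handle on. Indeed, the statement ``$A_1\otimes_{A_0}\Delta^0(\mu)$ is $\Delta^0$-good'' is essentially equivalent, after your own spectral-sequence reduction, to the bimodule filtration you are trying to prove, so the induction is structurally circular at the crux step. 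In (b) you have the analogous problem compounded: $A_2$ need not be $\Delta^{0,\op}$-good under $3$-Q-Koszulity (Turner's example), so the $\Tor$-vanishing needed to collapse the spectral sequence for $\Ext^2_{A_0\otimes_k A_0^{\op}}(A_2,-)$ is simply false as stated, and the assurance that it ``has to be finessed'' is not a proof. I would advise recentering the argument on the $\Delta\otimes\Delta^{\op}$-filtration of $A$ as a bimodule, which delivers the needed $(A_0\otimes_k A_0^{\op})$-structure of each graded piece $A_s$ directly and makes both (a) and (b) accessible via Corollary \ref{sec2cor} and K\"unneth.
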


\begin{proof}The $A\otimes_k A^\op$-module $A$ has a filtration with sections $\Delta(\lambda)\otimes_k\Delta^\op(\lambda)$, $\lambda\in\Lambda$, as briefly discussed in the proof of Lemma \ref{ideallemma}. Consequently, if $s\geq 0$, the $(A_0\otimes_kA_0^\op)$-module $A_s$ has a filtration with sections $(\Delta(\lambda)\otimes_k\Delta^\op(\lambda))_s$.
 The tensor product  $\Delta(\lambda)\otimes_k\Delta^\op(\lambda)$ arises as part of the image of a product
 $Ae\cdot eA$ in a quotient $A/J$ of $A$ by a graded idempotent ideal $J$. In fact, $\Delta(\lambda)$ arises as
 the image of $Ae$, and $\Delta^\op(\lambda)$ identifies with the image of $eA$. Clearly, $(Ae)_i\cdot (eA)_j
 \subseteq (AeA)_{i+j}$. Consequently, there is an identification of  $A_0\otimes_kA_0^\op$-modules 
 \begin{equation}\label{bigdirect} (\Delta(\lambda)\otimes_k\Delta^\op(\lambda))_s\cong \bigoplus_{i+j=s}\Delta(\lambda)_i\otimes_k\Delta^\op(\lambda)_j.\end{equation}
 
 For $s=1$, it is now clear that $A_1$ has a $\Delta^0\otimes_k\Delta_0^\op$-filtration. The proves the first
 statement in (a). The second assertion in follows from
 the fact that
 $$\Delta^0(\lambda)\otimes^{\mathbb L}_{A_0}\Delta^{0,\op}(\mu)\cong\begin{cases} k,\quad {\text{\rm if $\lambda=\mu$}};\\
 0\quad{\text{\rm otherwise.}}\end{cases}.$$
 See \cite[Prop. 9.1]{PS13a}.

 To begin the proof of (b), we first apply the K\"unneth
 formula to the terms in the direct sum on the right hand side of (\ref{bigdirect}), to obtain
\begin{equation}\label{Kunneth}\begin{aligned}\Ext^m_{A_0 \otimes_kA_0^\op}&(\Delta(\lambda)_i\otimes_j\Delta^\op(\lambda)_j, \nabla_0(\nu)\otimes_k\nabla_{0,\op}(\mu))\cong \\
& \bigoplus_{u+v=m} \Ext^u_{A_0}(\Delta(\lambda)_i,\nabla_0(\nu))\otimes_k\Ext^v_{A^\op_0}(\Delta^\op(\lambda)_j,\nabla_{0,\op}(\mu)).\end{aligned}\end{equation}

We know that $\Ext^u_{A_0}(\Delta(\lambda)_i,\nabla_0(\nu))=0$ for $0<i\leq u$ and $i\leq 2$,  by
Corollary \ref{sec2cor}. A similar vanishing holds, of course, for $i=0$, if $u>0$. Also, we can work with
$A_0^\op$, to obtain that $\Ext^v_{A^\op_0}(\Delta^\op(\lambda)_j,\nabla_{0,\op}(\nu))=0$ for $0<j\leq v$ and $j\leq 2$.

\smallskip
\noindent
\underline{Claim:} $\Ext^2_{A_0\otimes_kA_0^\op}(A_2,\nabla(\lambda)\otimes_k\nabla_{0,\op}(\nu))=0,\quad \forall\lambda,\nu.$

\smallskip
To prove this, we take $m=s=2$ in  (\ref{bigdirect}) and (\ref{Kunneth}). Because, as noted above, $A_2$ has a filtration with
sections $(\Delta(\lambda)\otimes_k\Delta^\op(\lambda))_s$, it suffices to show each term in the sum 
(\ref{Kunneth})  is 0
when $i+j=s$, $i,j\geq 0$. This is
clear if either $u=2$ or $v=2$ from the vanishing results immediately above the Claim. The other case is
$u=v=1$. Then, if $i=j=1$, we are done by Corollary \ref{sec2cor}. Otherwise, either $i=0$ or $j=0$, and
the discussion immediately above the Claim again applies.

Finally, to complete the proof, consider the short exact sequence $0\to W_2\to A_1\otimes_{A_0}A_1\to
A_2\to 0$ and the resulting long exact sequence of $\Ext^\bullet_{A_0\otimes A_0^\op}(-, \nabla_0(\lambda)\otimes_k\nabla_{0,\op}(\mu)$. 
Now (b) follows from the Claim and part (a).
\end{proof}
  
  \begin{rem} In general, it may not true that $\Delta(\lambda)_2$ has a $\Delta^0$-filtration under the hypothesis
  of Theorem \ref{tensortensor}(b). This follows
  from the discussion of Turner's counterexample in section \ref{turner} below. In addition, this means that $A_2$ may
  not have a $\Delta^0$-filtration under these hypotheses. For graded algebras $A$ arising from semisimple
  algebraic groups and
  finite posets of $p$-regular weights,
  all the $A_0$-modules $A_r$ have $\Delta^0$-filtrations if $p\gg 0$, using \cite[Thm. 5.1]{PS11}. 
  (Using the argument from the proof of  Theorem \ref{tensortensor}, it follows each $A_r$ has, in fact, a $\Delta^0\otimes_k \Delta^{0,\op}$-filtration, under
  this $p\gg 0$, $p$-regular weight hypothesis.)
  However, our aim in  this section has been to develop a theory which might hold for small primes, including
  even $p=2$ in type $A$, where Turner's counterexample occurs. See the conjectures in Section 7. The broad
  class of examples proposed there is expected to be at least Q-Koszul and quasi-hereditary, and even satisfy
  the stronger ``standard Q-Koszul" property discussed in the next section.
     \end{rem}
  
  \smallskip
  \begin{prob}
Let $M$ be a graded module for a Q-Koszul algebra $A$. Give conditions on a resolution of $M$ equivalent to
the condition that $\grExt^m(M,\nabla_0(\lambda)\langle r\rangle)\not=0\implies m=r$, for 
any $\lambda\in\Lambda$. \end{prob}

\section{Standard Q-Koszul algebras} In this section, standard Q-Koszul algebras are defined. The
definition simplifies that given in \cite{PS13a}, but a main result establishes the two different
notions are the same.

Suppose that $B$ is a QHA with weight poset $\Lambda$ over an algebraically closed field. 
Let $\sC=\sC(B)$ be the (highest weight) category of finite dimensional $B$-modules.  If $\Gamma$ is a
non-empty ideal in $\Lambda$, let $\sC[\Gamma]$ be the full subcategory of $B$-modules which have
composition factors $L(\gamma)$, $\gamma\in\Gamma$. Of course, $\sC[\Gamma]=\sC(B/J)$, for
a suitable defining ideal $J=J(\Gamma)$ of $B$. Necessarily $B/J$ is a QHA and $\sC[\Gamma]$ is
a highest weight category with weight poset $\Gamma$.  For details, see \cite{CPS-1}.

If $B=\bigoplus_{n\geq 0}B_n$ is positively graded, let $\sC_\gr=\sC_\gr(B)$ be the category of finite dimensional $\mathbb Z$-graded $B$-modules. (Sometimes, we also denote $\sC_\gr$ by $B$-grmod.) By \cite[Prop. 4.2]{CPS1a}, 
the idempotent ideal $J=J(\Gamma)$ is homogeneous; in fact, $J=BeB$ for some idempotent $e\in B_0$.
Each standard module
$\Delta(\lambda)$, $\lambda\in\Lambda$, has a natural $\mathbb N$-grading $\Delta(\lambda)=\bigoplus_{i\geq 0}\Delta^i(\lambda)$ in which each $\Delta_i(\lambda)$ is naturally a $B_0$-module. 
 Similarly, the costandard module $\nabla(\lambda)$ has a grading $\nabla(\lambda)=\bigoplus _{i\leq 0}
\nabla_i(\lambda)$.

A proof of the following elementary result is found in \cite[Cor. 3.2]{PS11}.

\begin{lem}
Suppose $B=\bigoplus_{n\geq 0}B_n$ is a positively graded quasi-hereditary algebra with poset $\Lambda$. Then
the subalgebra $B_0$ is quasi-hereditary with poset $\Lambda$. For $\lambda\in\Lambda$, the corresponding
standard (respectively, costandard) module is $\Delta^0(\lambda)$ (respectively, $\nabla_0(\lambda))$.\end{lem}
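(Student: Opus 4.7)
The plan is to prove the result by induction on $|\Lambda|$, exploiting the fact from \cite[Prop.~4.2]{CPS1a} (invoked just before in the proof of Lemma \ref{ideallemma}) that a heredity chain for $B$ can be chosen to consist of homogeneous idempotent ideals, each of the form $B e_i B$ with $e_i \in B_0$. For the inductive step, fix a maximal weight $\lambda \in \Lambda$ and an idempotent $e \in B_0$ so that the corresponding top heredity ideal is $J = BeB$. The goal is to show that $J_0 := B_0 e B_0$ is a heredity ideal of $B_0$, that $B_0/J_0$ is the grade-$0$ subalgebra of the positively graded QHA $B/J$ (which has weight poset $\Lambda \setminus \{\lambda\}$), and that $\Delta^0(\lambda) \cong \Delta(\lambda)_0$ at the top layer. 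Because $e$ sits in degree $0$, any element of $J$ of degree $0$ is a sum of homogeneous pieces $a e b$ with $a, b \in B_0$, so $J \cap B_0 = B_0 e B_0$; this immediately identifies $B_0/J_0$ with $(B/J)_0$.

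To verify that $J_0$ is a heredity ideal of $B_0$, I would check the two standard conditions. First, $eBe$ is semisimple by the heredity of $J$. Because $B_{\geq 1}$ is nilpotent in the finite-dimensional graded algebra $B$, the positive part $(eBe)_{\geq 1}$ lies in the Jacobson radical of $eBe$; semisimplicity then forces $eBe$ to be concentrated in degree $0$, so $eBe = eB_0 e$ is semisimple. Second, the heredity of $J$ means the multiplication map $Be \otimes_{eBe} eB \to BeB$ is an isomorphism; restricting to grade $0$ and using $eBe = eB_0 e$ yields an isomorphism $B_0 e \otimes_{eB_0 e} eB_0 \overset{\sim}{\longrightarrow} B_0 e B_0$, which is exactly the second heredity condition.

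For the standard module identification, I pick primitive idempotents $f \in B_0$ in the orthogonal decomposition of $e$; then at the top layer $\Delta(\lambda)$ is (a summand of) $Bf$, and its grade-$0$ part $(Bf)_0 = B_0 f$ is the corresponding standard module $\Delta^0(\lambda)$ for $B_0$. The costandard statement follows either by applying the same argument to $B^\op$ or by linear duality inside the highest weight category. Combining the identification at $\lambda$ with the inductive hypothesis applied to $B/J$ (whose grade-$0$ subalgebra is $B_0/J_0$) completes the proof.

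The main technical obstacle is the graded-semisimple-implies-concentrated step for $eBe$: namely, that a semisimple quotient of a nonnegatively graded finite-dimensional algebra must live entirely in degree~$0$. Once that is in hand, the restriction of heredity from $B$ to $B_0$ is essentially a formal consequence of taking the grade-$0$ component of the heredity isomorphism, and the rest of the argument is bookkeeping along a heredity chain.
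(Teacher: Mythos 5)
Your proof is correct, and the heredity-chain induction you use — invoking \cite{CPS1a} to get a homogeneous chain with idempotents in $B_0$, noting that nilpotency of $B_{\geq 1}$ forces the semisimple algebra $eBe$ to be concentrated in degree $0$, and restricting the heredity isomorphism $Be\otimes_{eBe}eB\overset\sim\longrightarrow BeB$ to its grade-$0$ component to obtain the heredity condition for $B_0eB_0$ — is the natural argument for this lemma. The paper itself does not prove the statement but defers to \cite[Cor.~3.2]{PS11}; your reconstruction is essentially the standard route that reference follows.
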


\begin{defn} The graded quasi-hereditary algebra $B$ is called a {\it standard Q-Koszul algebra} provided
that, for all $\lambda,\mu\in\Lambda$, 
\begin{equation} \label{conditions} \begin{cases}(a)\quad\grExt^n_B(\Delta(\lambda),\nabla_0(\mu)\langle r\rangle)\not=0
\implies n=r;\\
(b)\quad \grExt^n_B(\Delta^0(\mu),\nabla(\lambda)\langle r\rangle)\not=0\implies n=r\end{cases}\end{equation}
for all integers $n,r$.\end{defn}

Consider the bounded derived category $D^b(\sC_\gr)$ for the abelian category $\sC_\gr=\sC_\gr(B)$ of 
(finite dimensional) graded $B$-modules. Then $D^b(\Bgrmod)$ is a triangulated category with
shift operator $X\mapsto X[1]$, $X\in D^b(\Bgrmod)$. If $X=X^\bullet$ is represented by a complex in $D^b(\Bgrmod)$, 
$X[1]\in D^b(\Bgrmod)$ is the complex obtained by shifting $X$ one-unit to the left and replacing each
differential by its negative. Put $[r]=[1]^r$. Next, set $X\langle r\rangle$ to be the complex by applying
the grading shift operator $\langle r\rangle$ to the terms $X^n$ and the differentials. Finally, let
$X\{r\}:=X\langle r\rangle[r]=X[r]\langle r\rangle$.

Define a full subcategory $\sE^L=\sE^L(\sC_\gr):=\bigcup_{i\geq 0} \sE^L_i$ of $D^b(\sC_\gr)$ as follows: Let $\sE^L_0\subseteq D^b(\sC_\gr)$ consist of all finite direct sums 
$\Delta(\lambda)\{r\}$, for $\lambda\in\Lambda$, $r\in\mathbb Z$. Having defined $\sE^L_r$ define
$\sE^L_{r+1}$ to consist of all objects $X\in D^b(\sC_\gr)$ for which there is a distinguished
triangle $Y\to X\to Z\to$ with $Y,Z\in \sE^L_r$.  Another full subcategory $\sE^R:=\sE^R(\sC_\gr)=\bigcup_{i\geq 0}\sE^R$ of
$D^b(\Bgrmod)$ is constructed similarly, but using the $\nabla(\lambda)\{r\}$, $\lambda\in\Lambda$,
$r\in\mathbb Z$.

For $X,Y\in D^b(\Bgrmod)$ and $n\in\mathbb Z$, write $\grHom^n(X,Y):=\Hom_{D^b(\sC_\gr)}(X,Y[n])$.
If $X,Y\in\sC_\gr$ are viewed as complexes concentrated in grade 0, then $\hom^n(X,Y)=\grExt^n_B(X,Y)$
for $n\geq 0$, and $=0$ if $n<0$.


\begin{thm} \label{better} Given $M\in D^b(\sC_\gr)$, 
\[\begin{cases} (a)\quad M\in\sE^L\iff \forall \lambda\in\Lambda, n,r\in{\mathbb Z},
\grHom^n_{D^b(\sC_\gr)}(M,\nabla(\lambda)\langle r\rangle)\not=0\implies n=r\\
(b)\quad M\in\sE^R\iff \forall \lambda\in\Lambda, n,r\in{\mathbb Z}, \grHom^n_{D^b(\sC_\gr)}(\Delta(\lambda)\langle r\rangle,M)\not=0\implies n=-r.\end{cases}\]
 \end{thm}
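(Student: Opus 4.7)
I plan to prove part (a) directly, with part (b) then following by a left-right symmetric (opposite-algebra and linear-duality) argument. Each direction of (a) uses a different induction.

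For the forward direction of (a), I would induct on the smallest index $i \geq 0$ such that $M \in \sE^L_i$. In the base case $M$ is a finite direct sum of objects $\Delta(\lambda)\{r\}$, and one computes
\[ \grHom^n\!\bigl(\Delta(\lambda)\{r\},\nabla(\mu)\langle s\rangle\bigr) \,=\, \grExt^{n-r}_B\!\bigl(\Delta(\lambda),\nabla(\mu)\langle s-r\rangle\bigr), \]
which by the standard graded highest-weight-category vanishing between $\Delta$'s and $\nabla$'s is zero unless $n-r=0=s-r$ and $\lambda=\mu$; in particular $n = s$, which is exactly the required diagonal condition. For the inductive step from $\sE^L_i$ to $\sE^L_{i+1}$, given a distinguished triangle $Y \to X \to Z \to Y[1]$ with $Y,Z \in \sE^L_i$, apply $\grHom^\bullet(-,\nabla(\lambda)\langle r\rangle)$ to obtain a long exact sequence; the off-diagonal vanishing for $Y$ and $Z$ sandwiches the same vanishing for $X$.

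For the backward direction of (a), I plan to induct on the size of the weight support of $M$ in $\Lambda$, exploiting the QHA recollement attached to a poset ideal $\Gamma\subsetneq\Lambda$. Let $\lambda_0$ be a maximal weight appearing in the weight support of the total cohomology of $M$. The idea is to use $\Delta(\lambda_0)$ as a source to peel off the $\lambda_0$-contributions: assemble a basis of the graded space $\bigoplus_{n,r}\grHom^n\!\bigl(\Delta(\lambda_0)\langle r\rangle,M\bigr)$ into a morphism $\bigoplus_j \Delta(\lambda_0)\{r_j\} \to M$, and form the cone $N$ in a distinguished triangle $\bigoplus_j \Delta(\lambda_0)\{r_j\} \to M \to N \to$. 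The cone $N$ should have strictly smaller weight support, so by induction $N \in \sE^L$, and the triangle then places $M$ in $\sE^L$.

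The hard part is this backward step, specifically the simultaneous verification that $N$ really has smaller weight support and still satisfies the diagonal vanishing hypothesis. The first point relies on the base-case computation that $\grHom^n\!\bigl(\Delta(\lambda_0),\nabla(\mu)\langle r\rangle\bigr)$ is $k$ exactly when $n=r=0$ and $\lambda_0=\mu$, together with maximality of $\lambda_0$ among cohomological weights. The second point follows from the long exact sequence of $\grHom^\bullet(-,\nabla(\mu)\langle r\rangle)$ applied to the defining triangle, since both $M$ (by hypothesis) and the standard-module summands (by the forward direction already established) satisfy the diagonal vanishing. The delicate tuning is choosing enough grading shifts $r_j$ so that the assembled map induces a surjection on $\grHom^n\!\bigl(\Delta(\lambda_0)\langle r\rangle,-\bigr)$ in every bidegree $(n,r)$. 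Finally, part (b) is obtained by applying (a) to $B^\op$ together with linear duality, which exchanges $\Delta$'s with $\nabla$'s and, by reversing the grading shifts, converts the condition $n=r$ into $n=-r$.
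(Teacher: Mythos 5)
Your forward direction of (a) and the reduction of (b) to (a) by opposite-algebra plus duality are both fine, and your general strategy for the backward direction---peel off $\Delta(\lambda_0)$-terms from $M$ and reduce to a cone with smaller support---is the same high-level idea as the paper's. But there are two genuine gaps in the backward step, the second of which is the crux of the whole argument.

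\textbf{The induction measure does not decrease as you claim.} You propose to induct on ``the size of the weight support of $M$ in $\Lambda$'' and assert that the cone $N$ of $\bigoplus_j\Delta(\lambda_0)\{r_j\}\to M$ has strictly smaller weight support. This is false: the standard modules $\Delta(\lambda_0)$ you are adding introduce into $H^\bullet(N)$ all the composition factors $L(\mu)$ with $\mu<\lambda_0$ appearing in $\rad\Delta(\lambda_0)$, none of which need have been present in $H^\bullet(M)$. Already $M=L(\lambda_0)$ has one-element support but $N=\rad\Delta(\lambda_0)[1]$ can have many more. What does decrease (if you peel off \emph{all} $\lambda_0$-contributions at once, as you intend) is the size of the \emph{ideal} generated by the weight support. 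The paper instead peels one $\Delta(\gamma)\{t\}$ at a time and uses the lexicographic measure $(|\Gamma|,m)$, where $\Gamma$ is the ideal generated by the maximal supported weight and $m$ the multiplicity of $L(\gamma)$ in $H^\bullet$; it also totally orders $\Lambda$ by a refinement so that the maximal weight is unique. Your all-at-once variant is viable, but you must replace ``size of weight support'' by something like ``size of the ideal generated by the weight support'' and address non-uniqueness of maximal elements (or totally order $\Lambda$ as the paper does).

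\textbf{You have not shown the assembled map can be built at all.} You want a single morphism $\phi:\bigoplus_j\Delta(\lambda_0)\{r_j\}\to M$ inducing surjections on $\grHom^n(\Delta(\lambda_0)\langle r\rangle,-)$ in \emph{every} bidegree $(n,r)$, and call the choice of shifts ``delicate tuning.'' But a summand $\Delta(\lambda_0)\{r_j\}$ only contributes to the bidegree $(n,r)=(-r_j,r_j)$, since $\grHom^n(\Delta(\lambda_0)\langle r\rangle,\Delta(\lambda_0)\{r_j\})$ vanishes unless $r=r_j$ and $n=-r_j$ (using projectivity of $\Delta(\lambda_0)$ in the truncated subcategory and $\End(\Delta(\lambda_0))=k$). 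So your map can only be surjective on the anti-diagonal $r=-n$, and no tuning of the $r_j$ helps in any other bidegree. What rescues the argument is precisely the observation---which you do not make---that the hypothesis $(\bigstar(M))$ already forces $\grHom^n(\Delta(\lambda_0)\langle r\rangle,M)=[H^n(M):L(\lambda_0)\langle r\rangle]$ to vanish unless $r=-n$. Deriving this is the heart of the paper's proof: one uses that $\nabla(\lambda_0)$ is \emph{injective} in the truncated graded category, so a composition factor $L(\lambda_0)\langle t\rangle$ of $H^n(M)$ yields a nonzero element of $\hom(H^n(M),\nabla(\lambda_0)\langle t\rangle)\cong\grHom^{-n}_{D^b(\sC_\gr)}(M,\nabla(\lambda_0)\langle t\rangle)$, whence $t=-n$ by $(\bigstar(M))$. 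Without this injectivity argument your ``tuning'' is impossible, not merely delicate; you should add this step explicitly (and note it also fixes the surjectivity claim, since then source and target are supported in exactly the same bidegrees).
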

 
 \begin{proof} We will prove statement (a), leaving the similar (b) to the reader. For $M\in D^b(\sC_\gr)$,
 we say that condition $(\bigstar(M))$ holds provided: 
 $$\forall \lambda\in\Lambda, n,r\in{\mathbb Z},
\grHom^n_{D^b(\sC_\gr)}(M,\nabla(\lambda)\langle r\rangle)\not=0\implies n=r.$$
Also, we can assume (after a refinement consistent with the original partial ordering) that the poset $\Lambda$ is
totally ordered.

 ($\Rightarrow$)  First, 
 $$\grHom^n_{D^b(\sC_\gr)}(\Delta(\lambda)\{a\},\nabla(\mu)\langle b\rangle)\cong\grHom^{n-a}_{D^b(\sC_\gr)}
 (\Delta(\lambda),\nabla(\mu)\langle b-a\rangle).$$
Hence, if the left hand side is non-zero, necessarily $\Ext^{n-a}_B(\Delta(\lambda),\nabla(\mu))\not=0$
so $n-a=0$ and $\lambda=\mu$ by well-known homological properties of standard and co-standard modules.
Hence, $\grHom_{\sC_\gr}(\Delta(\lambda),\nabla(\lambda)\langle b-a\rangle)\not=0$ so $b-a=0$.
Thus, $n-a=b-a$ so $n=b$, as required. Now it follows that $(\bigstar(M))$ holds if $M$ is a direct sum of
objects $\Delta(\lambda)\{m\}$, $m\in\mathbb Z$. Finally, if $N\to M\to Q\to$ is a distinguished triangle
in $D^b(\sC_\gr)$ and, if both $(\bigstar(N))$ and $(\bigstar(Q))$ hold, then $(\bigstar(M))$ holds since
$\grhom$ is a cohomological bifunctor (i.~e., takes distinguished triangles in either variable to long exact sequences). Thus, $(\bigstar(M))$ holds on $\sE^L$, as required.

 ($\Leftarrow$) 
  Consider the set $\Xi$ of ordered pairs $(|\Gamma|, m)$, where $\Gamma$ is an ideal (possibly the empty ideal) in $\Lambda$, and
  $m$ is a positive integer. The set $\Xi$ is ordered lexicographically. Given $X\in D^b(\sC_\gr)$, let 
  $d(X):=(|\Gamma|,m)$, where $\Gamma$ is the ideal generated by the maximal element $\gamma\in\Lambda$
  for which $[H^\bullet(X):L(\gamma)]\not=0$ and $m=[H^\bullet(X):L(\gamma)]$. If $X=0$, $d(X)=(0,0)$.

 Assume that $M\in D^b(\sC_\gr)$ satisfies the condition $(\bigstar(M))$
   We must show that $M\in\sE^L$. 
  We proceed by induction on $d(M)$ for $M$ satisfying $(\bigstar(M))$. If $d(M)=(0,0)$, then $M\cong 0$ and
  so $M\in\sE^L$, trivially. So assume that $d(M)\not=(0,0)$.   Let $d(M)=(|\Gamma|,m)$, and observe that
  because $\Lambda$ is totally ordered, the cardinality of $\Gamma$ determines $\Gamma$. Let
  $\gamma\in\Gamma$ be the unique maximal element. Since
  $H^\bullet(M)\in\sC_\gr[\Gamma]$, if $J=J(\Gamma)$, then
  $M$ belongs to the
  relative derived category $D^b_{\sC_{\gr}(B/J)}(\sC_\gr)$ which can 
  be identified with $D^b(\sC_\gr[\Gamma])$---see \cite[\S2,3]{CPS-1} and the references there.
  It suffices to show that $M\in\sE^L(B/J)$, since the natural full embedding $i_*:D^b(\sC_\gr(B/J))\to
  D^b(\sC_\gr(B))$ induced by the quotient map $B\twoheadrightarrow B/J$ carries $\sE^L(B/J)$
  to $\sE^L(B)$, by the inductive definition of $\sE^L(B/J)$.
  
  For some choice of integers $t$ and $r$,    $L(\gamma)\langle t\rangle$ is a composition factor
 of $H^r(M)$ in $\sC_\gr(B/J)$. 
  Because $\nabla(\gamma)\in B/J{\text{\rm --grmod}}$ is an injective module, 
$$0\not= \hom_{\sC_\gr}(H^r(M),\nabla(\gamma)\langle t\rangle)\cong\hom^{-r}_{D^b(\sC_\gr)}(M,\nabla(\gamma)
\langle t\rangle).$$
In particular, $t=-r$ since $(\bigstar(M))$ holds.
Choose a morphism $f:\Delta(\gamma)\{t\}\to M$ inducing a surjection
$$\grHom^t_{D^b(\sC_\gr)}(M,\nabla(\gamma)\langle t\rangle)\twoheadrightarrow 
\grHom^t_{D^b(\sC_\gr)}
(\Delta(\gamma)\{t\},\nabla(\gamma)\langle t\rangle)\cong k.$$
Consequently, for each integer $n$, there is a surjection
$$\grHom^n_{D^b(\sC_\gr)}(M,\nabla(\gamma)\langle t\rangle)\twoheadrightarrow 
\grHom^n_{D^b(\sC_\gr)}
(\Delta(\gamma)\{t\},\nabla(\gamma)\langle t\rangle)$$
(for $n\not= t$ both sides are 0).
Now form the distinguished triangle $\Delta(\gamma)\{r\}\to M\to M'\to$ and observe that
$$\grHom^n(M',\nabla(\lambda)\langle t\rangle)\subseteq\grHom^n(M,\nabla(\lambda)\langle t\rangle),
\quad \forall n,\lambda$$
 since $\grHom$ is a cohomological bifunctor.  In particular, this means that $(\bigstar(M'))$ holds.

Since $M'\in D^b(\sC_\gr[\Gamma])$ the composition factors $L(\gamma')$ of $H^\bullet(M')$ all satisfy
$\gamma'\in\Gamma$. However, 
$[H^n(M'):L(\gamma)]=[H^n(M):L(\gamma)]$ if $n\not=r$,  while
$[H^r(M'):L(\gamma)]<[H^r(M):L(\gamma)]$.  Thus, $d(M')<d(M)$. By induction, $M'\in\sE^L$. 
Since $\Delta(\gamma)\{r\}\in\sE^L$ as well, and $\Delta(\gamma)\{t\}\to M\to M'\to$ is distinguished, it
finally follows that $M\in\sE^L$.
\end{proof}
    
\begin{cor} Assume that $B$ is a standard Q-Koszul algebra with weight poset
$\Lambda$. For $\lambda,\mu\in\Lambda$, 
$$\grExt^n_B(\Delta^0(\lambda),\nabla_0(\mu)\langle r\rangle)\not=0\implies n=r.$$
Therefore, $B$ is a Q-Koszul algebra.
\end{cor}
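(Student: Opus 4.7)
The plan is to read the two defining conditions of a standard Q-Koszul algebra through the lens of Theorem~\ref{better}, recognize $\Delta^0(\mu)$ and $\nabla_0(\mu)$ as objects of the triangulated subcategories $\sE^L$ and $\sE^R$ respectively, and then deduce the Q-Koszul estimate from a general pairing principle between these two subcategories. The main point is that once the objects live in these categories, the Q-Koszul condition is forced by the standard dévissage argument using distinguished triangles and long exact sequences.

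First I would perform the translation. For any $N\in\sC_\gr$ and $n\geq 0$, the definitions give $\grHom^n(X,Y\langle r\rangle)=\grHom^n(X\langle -r\rangle,Y)$. Applying this to condition (b) of (\ref{conditions}), namely $\grExt^n_B(\Delta^0(\mu),\nabla(\lambda)\langle r\rangle)\neq 0\Rightarrow n=r$, yields exactly the characterization in Theorem~\ref{better}(a), so $\Delta^0(\mu)\in\sE^L$ for every $\mu\in\Lambda$. Applying the same identity to condition (a) of (\ref{conditions}), $\grExt^n_B(\Delta(\lambda),\nabla_0(\mu)\langle r\rangle)\neq 0\Rightarrow n=r$, and setting $s=-r$, gives $\grHom^n(\Delta(\lambda)\langle s\rangle,\nabla_0(\mu))\neq 0\Rightarrow n=-s$, which is exactly the characterization in Theorem~\ref{better}(b). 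Hence $\nabla_0(\mu)\in\sE^R$ for every $\mu\in\Lambda$.

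Next I would prove the following pairing lemma: for every $M\in\sE^L$, $N\in\sE^R$ and every $n,r\in\mathbb{Z}$,
\[
\grHom^n(M,N\langle r\rangle)\neq 0\implies n=r.
\]
The argument is induction on the stage $i$ with $N\in\sE^R_i$. When $N$ is a sum of objects $\nabla(\lambda)\{s\}$, unwinding $\{s\}=\langle s\rangle[s]$ gives $\grHom^n(M,\nabla(\lambda)\{s\}\langle r\rangle)=\grHom^{n+s}(M,\nabla(\lambda)\langle s+r\rangle)$, which by Theorem~\ref{better}(a) vanishes unless $n+s=s+r$, i.e.\ $n=r$. For the inductive step, a distinguished triangle $N'\to N\to N''\to$ with $N',N''\in\sE^R_i$ produces a long exact sequence under $\grHom^{\bullet}(M,-\langle r\rangle)$; non-vanishing of the middle term forces non-vanishing of one of the outer terms, whose respective inductive hypotheses each yield $n=r$.

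Finally, taking $M=\Delta^0(\lambda)\in\sE^L$ and $N=\nabla_0(\mu)\in\sE^R$, the pairing lemma specializes to $\grExt^n_B(\Delta^0(\lambda),\nabla_0(\mu)\langle r\rangle)=\grHom^n(M,N\langle r\rangle)\neq 0\Rightarrow n=r$, which is the Q-Koszul condition (\ref{QKoszul}). I expect the one genuinely fiddly point to be the accounting in the translation step, where the interaction of $\langle r\rangle$ and $[n]$ with the conventions $\{r\}=\langle r\rangle[r]$ must be handled carefully so that condition (a) of standard Q-Koszul lands on the negative-sign characterization of $\sE^R$ rather than some twisted variant; everything else is the formal yoga of triangulated subcategories generated under extensions.
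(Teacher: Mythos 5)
Your proof is correct and follows essentially the same strategy as the paper: both apply Theorem \ref{better} to place $\Delta^0(\lambda)\in\sE^L$ and $\nabla_0(\mu)\in\sE^R$, then d\'evissage along distinguished triangles to reduce the vanishing to the generating objects. The only difference is the side on which you d\'evissage --- you unfold $\nabla_0(\mu)$ via $\sE^R$ down to $\nabla(\lambda)\{s\}$ and finish with the $\sE^L$-characterization of Theorem \ref{better}(a), whereas the paper unfolds $\Delta^0(\lambda)$ via $\sE^L$ down to $\Delta(\rho)\{a\}$ and finishes with condition (a) of (\ref{conditions}); this symmetric variant is exactly what the paper's closing remark \emph{``A similar argument applies to $\sE^R$''} refers to.
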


\begin{proof} By Theorem \ref{better}, $\Delta^0(\lambda)\in \sE^L$ and $\nabla_0(\mu)\in\sE^R$. 
Using the definition of $\sE^L$, it is enough to check that 
$$\grHom^n(\Delta(\rho)\{a\},\nabla_0(\mu)\langle r\rangle)\not=0\implies n=r.$$
But $$\grHom^n(\Delta(\rho)\{a\},\nabla_0(\mu)\langle r\rangle)\cong \grHom^{n-a}(\Delta(\rho),
\nabla_0(\mu)\langle r-a\rangle),$$
so that $n-a=r-a$ or $n=r$ as required. A similar argument applies to $\sE^R$.\end{proof} 

We also have the following consequence of the above corollary together with  Theorems \ref{thirdprop}
and \ref{tensortensor}.

\begin{cor}\label{cor3.5} Assume that $B$ is a standard Q-Koszul algebra with weight poset $\Lambda$. Then
$B$ is a quadratic algebra. In addition, the $B\otimes_kB^\op$-modules $B_1$  and $B_1\otimes_{B_0}B_1$ each have a $\Delta^0\otimes_k\Delta^{0,\op}$-filtration, as does
the grade 2 relation module $W_2$ (defined above Theorem \ref{thirdprop}).\end{cor}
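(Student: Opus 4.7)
The plan is to deduce this corollary essentially by assembling the results already developed in Section 2 and the preceding corollary. The crucial observation is that the hypothesis ``standard Q-Koszul'' is strong enough to trigger all the Q-Koszul machinery at once: by the corollary immediately above (which is itself an application of Theorem \ref{better}), a standard Q-Koszul algebra $B$ is automatically Q-Koszul, and hence $n$-Q-Koszul for every $n \in \mathbb{N}$. Moreover $B$ is quasi-hereditary by the very definition of standard Q-Koszul.

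First, I would apply Theorem \ref{thirdprop}(a) with $A = B$: since $B$ is in particular $2$-Q-Koszul, $B$ is a quadratic algebra. This takes care of the quadraticity claim with no further work. Simultaneously, Theorem \ref{thirdprop}(c) shows that $B_1$ has a $\Delta^0$-filtration as a left $B_0$-module and a $\Delta^{0,\op}$-filtration as a left $B_0^\op$-module, a fact that will be subsumed by the stronger bimodule statement below.

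Next, I would invoke Theorem \ref{tensortensor}(a) (applied with $A = B$, using that $B$ is both $2$-Q-Koszul and quasi-hereditary) to upgrade the one-sided filtration of $B_1$ to a $\Delta^0 \otimes_k \Delta^{0,\op}$-filtration of the $(B_0, B_0^\op)$-bimodule $B_1$, and simultaneously to get a $\Delta^0 \otimes_k \Delta^{0,\op}$-filtration on $B_1 \otimes_{B_0} B_1$. Finally, since $B$ is also $3$-Q-Koszul, Theorem \ref{tensortensor}(b) yields the desired $\Delta^0 \otimes_k \Delta^{0,\op}$-filtration on the grade $2$ relation module $W_2$. Thus all three filtration claims follow by direct citation.

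The only potential wrinkle is making sure that the $W_2$ defined in the statement of Corollary \ref{cor3.5} (``the kernel of the multiplication map $B_1 \otimes_{B_0} B_1 \to B_2$'') genuinely coincides with the $W_2$ used in Theorem \ref{tensortensor}(b); this is immediate from the definition recalled above Theorem \ref{tensortensor}, so there is no real obstacle. Thus the entire proof reduces to checking hypotheses and quoting Theorem \ref{thirdprop} and Theorem \ref{tensortensor}, together with the Q-Koszul permanence from the corollary just proved.
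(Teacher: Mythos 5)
Your proposal is correct and matches the paper's intent exactly: the paper presents Corollary~\ref{cor3.5} as an immediate consequence of the preceding corollary (standard Q-Koszul $\Rightarrow$ Q-Koszul) together with Theorems~\ref{thirdprop} and~\ref{tensortensor}, which is precisely the chain of citations you assemble. Your care in checking that $B$ is quasi-hereditary (built into the definition of standard Q-Koszul) and that Q-Koszul gives $n$-Q-Koszul for all $n$ (hence both the $2$- and $3$-Q-Koszul hypotheses needed) is exactly what makes the citations legitimate.
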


\medskip
\begin{center}{\bf Recollement}\end{center}

\smallskip
Finally, we indicate how recollement works for standard Q-Koszul algebras. More specifically, let
$B$ be standard Q-Koszul with poset $\Lambda$. Let $\Gamma$ be a non-empty ideal in $\Lambda$. 
Let $\Bmod[\Gamma]$ be the full subcategory of $\Bmod$ consisting of all finite dimensional modules
having composition factors $L(\gamma)$, $\gamma\in\Gamma$. Then $\Bmod[\gamma]\cong
\BJmod$, for some idempotent ideal $J$ in $B$. By \cite{CPS5}, $J=BeB$ for some idempotent $e\in
B_0$, so that $B/J$ is a positively graded quasi-hereditary algebra, and we can form the category
$\BJgrmod$ of graded $B/J$-modules. As remarked before, the quotient map $B\twoheadrightarrow B/J$ defines 
full embeddings $i_*:D^b(\BJmod)\to D^b(\Bmod)$ and $i_*:D^b(\BJgrmod)\to D^b(\Bgrmod)$. The
functor $i_*$ takes standard modules $\Delta^{B/J}(\gamma)$ for the quasi-hereditary algebra $B/J$ to
standard modules $\Delta^B(\gamma)=\Delta(\gamma)$, and we identify them accordingly. Similarly,
$i_*$ maps standard modules for $(B/J)_0$ to those for $B_0$, and we denote them both by
$\Delta^0(\gamma)$. Similar remarks apply to costandard modules. Since $i_*$ is a full embedding
at the derived category level, it preserves Ext-groups. Therefore, $B/J$ is also a standard Q-Koszul
algebra. 

Consider next the quasi-hereditary algebra $eBe$. The module category $eBe$-mod is equivalent to
the quotient category $\Bmod/\BJmod$, a highest weight category with weight poset $\Omega:=\Lambda\backslash\Gamma$. All this fits into a standard recollement diagram
$$ D^b(\BJmod) \begin{smallmatrix} {\overset {i^*}\longleftarrow }\\
 {\overset {i_*}\longrightarrow} \\
{\overset {i^!}\longleftarrow}\end{smallmatrix} D^b(\Bmod) \begin{smallmatrix}{\overset {j_!}\longleftarrow }\\ {\overset {j^*}\longrightarrow} \\
{\overset{ j_*}\longleftarrow}\end{smallmatrix}D^b(eBe{\text{\rm --mod}})$$
A similar recollement diagram is obtained by replacing $\Bmod$, $\BJmod$, $eBe-{\text{\rm mod}}$
by $\Bgrmod$, $\BJgrmod$, $eBe{\text{\rm --grmod}}$, respectively. It is well-known that given any
$\omega\in\Omega$, $j^*\Delta(\omega)$ (respectively, $j^*\nabla(\omega)$) is the standard (respectively,
costandard) module for $eBe$-mod attached to $\gamma$. Simiarly, $j^*L(\omega)$ is the
irreducible $eBe$-module attached to $\omega$. In addition, $j_!j^*\Delta(\omega)\cong \Delta(\omega)$
and $j_*j^*\nabla(\omega)\cong\nabla(\omega)$. The same holds true for $\Delta^0(\omega)$
and $\nabla^0(\omega)$. Consequently, we see that $eBe$ is itself a standard Q-Koszul algebra.

\section{$\Ext$-algebras}
First, we recall some recent results of Madsen \cite{Madsen}, \cite{Madsen2}, and we very briefly indicate some
problems suggested by these results in the context of Q-Koszul algebras. This topic will be discussed further in \cite{PS14}.

Let $A$ be a
positively graded finite dimensional algebra, and let $T$ be a finite dimensional tilting module
for the algebra $A_0$. Then $T$ is regarded as an  $A$-module through the natural map $A\to A_0$. Assume that
$A_0$ has finite global dimension. Following \cite{Madsen},  the algebra $A$ is defined to be $T$-Koszul provided that
$\grExt^j_A(T,T\langle i\rangle)\not=0\implies i=j.$ (Madsen \cite{Madsen} does not require that $A$ be finite dimensional, only finite
dimensional in each grade.)

Rather than define a tilting module $T$ for $A_0$,  we are interested here in the special case in which $A_0$ is a QHA with
weight poset $\Lambda$. In this situation,
$T=\bigoplus_{\lambda\in\Lambda}T(\lambda)^{\oplus n_\lambda}$, where the $n_\lambda$ are positive integers,
and $T(\lambda)$ is the unique (up to isomorphism) indecomposable $A_0$-module of highest weight $\lambda$ which has
both a $\Delta$- and a $\nabla$-filtration. In other words, $T$ is a full tilting module in the sense of quasi-hereditary algebras; cf. Ringel \cite{Ringel} and especially Donkin \cite{Donk}.

\begin{prop}\label{Mad} Assume that $A$ is a Q-Koszul algebra. Then $A$ is $T$-Koszul for any (full) tilting module $T$ for the
QHA $A_0$. \end{prop}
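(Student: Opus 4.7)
The plan is to reduce the $T$-Koszul property to the defining Q-Koszul vanishing by filtering both arguments of $\grExt^j_A(T,T\langle i\rangle)$. Since $A_0$ is a QHA and $T=\bigoplus_{\lambda\in\Lambda}T(\lambda)^{\oplus n_\lambda}$ is a full tilting module, each $T(\lambda)$ has, as an $A_0$-module, both a $\Delta^0$-filtration and a $\nabla_0$-filtration, and hence so does $T$. Via $\pi\colon A\twoheadrightarrow A_0$, I view $T$ and each $\Delta^0(\lambda)$, $\nabla_0(\mu)$ as graded $A$-modules concentrated in grade $0$. Because $A$ acts on each of them through $A_0$, the $A_0$-filtrations lift verbatim to filtrations in $\Agrmod$ with subquotients of the form $\Delta^0(\lambda_k)$ (respectively, $\nabla_0(\mu_k)$), each concentrated in grade $0$.

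Next I would induct on the length of a $\Delta^0$-filtration of the \emph{first} argument. Given a short exact sequence $0\to T'\to T''\to\Delta^0(\lambda)\to 0$ in $\Agrmod$, the piece
\[
\grExt^j_A(\Delta^0(\lambda),T\langle i\rangle)\to\grExt^j_A(T'',T\langle i\rangle)\to\grExt^j_A(T',T\langle i\rangle)
\]
of the long exact sequence shows that the desired vanishing $\grExt^j_A(-,T\langle i\rangle)=0$ for $i\neq j$ propagates from the ends to the middle. So the question reduces to proving
\[
\grExt^j_A(\Delta^0(\lambda),T\langle i\rangle)\neq 0\ \implies\ i=j,\qquad\forall\lambda\in\Lambda.
\]
Then I would run the symmetric induction on a $\nabla_0$-filtration of the \emph{second} argument, using the long exact sequences of $\grExt^\bullet_A(\Delta^0(\lambda),-)$. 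This reduces the problem to
\[
\grExt^j_A(\Delta^0(\lambda),\nabla_0(\mu)\langle i\rangle)\neq 0\ \implies\ i=j,\qquad\forall\lambda,\mu\in\Lambda,
\]
which is precisely the Q-Koszul property of $A$.

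The argument is essentially a double induction on filtration lengths, and the Q-Koszul hypothesis is used only at the base step. The one point that deserves explicit verification, and which I would regard as the main (mild) obstacle, is that the $\Delta^0$- and $\nabla_0$-filtrations of $T$ constructed at the level of $A_0$-modules really do lift to filtrations of $T$ as a graded $A$-module; this is automatic because $T$, $\Delta^0(\lambda)$ and $\nabla_0(\mu)$ are all concentrated in grade $0$ and $A$ acts through $A_0$, so there is no interaction between the filtration and the grading shift $\langle i\rangle$.
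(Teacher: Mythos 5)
Your proof is correct and follows exactly the same route as the paper's: the paper simply states that since $T$ has both a $\Delta^0$- and a $\nabla_0$-filtration, any nonvanishing $\grExt^j_A(T,T\langle i\rangle)$ forces some nonvanishing $\grExt^j_A(\Delta^0(\lambda),\nabla_0(\mu)\langle i\rangle)$, hence $i=j$ by the Q-Koszul property. You have merely spelled out the long-exact-sequence inductions that the paper leaves implicit.
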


\begin{proof} Since $T$ has a $\Delta^0$-filtration and a $\nabla_0$-filtration, if $\grExt_A^j(T,T\langle i\rangle)\not=0$,
then, for some $\lambda,\mu\in\Lambda$, $\grExt_A^j(\Delta^0(\lambda),\nabla_0(\mu)\langle i\rangle)\not=0$, so
that, by (\ref{QKoszul}), $i=j$, as required.\end{proof}

In \cite[Thm. 4.2.1]{Madsen}, it is proved that if $A$ is any $T$-Koszul algebra, then
 $$A^\dagger:=\Ext^\bullet_A(T,T)^\op\quad{\text{\rm is $T^*$-Koszul.}}$$
Here $T^*:=\Hom_k(T,k)$, viewed as a left (tilting) module for $A_0^\dagger =\End_A(T,T)^\op\cong\End_{A_0}(T,T)^\op$.
In addition, $A\cong A^{\dagger\dagger}$. Moreover, if $A$ has finite global dimension (as is the case if $A$ is
a QHA), then \cite[Thm. 4.3.4]{Madsen}, applied with $A=\Gamma$ and $A^\dagger=\Lambda$ there,
gives an equivalence
$$G^b_{T^*}:D^b(A^\dagger{\text{\rm --grmod}})\overset\sim\longrightarrow D^b(A{\text{\rm --grmod}})$$
of triangulated categories.  

 The relationship of Q- and T-Koszulity needs to be understood better. On the one hand, the very ``Koszul-like" quadratic
 property proved in Theorem \ref{thirdprop}(a) seems to be unknown for general T-Koszul algebras, even those of finite global dimension. On the other hand,
 all current knowledge of $A^\dagger$ above currently derives from the T-Koszul property, which tells us, at present, only that
 $A^\dagger$ is $T^*$-Koszul. In particular, we do not
 know if $A^\dagger$ is Q-Koszul if $A$ is Q-Koszul, or even if $A$ is standard Q-Koszul. Formally, we ask the following
 questions.
 
\begin{ques}\label{Madsonques} Let $A$ be a Q-Koszul algebra.

\smallskip
(a) Under what conditions is the algebra $A^\dagger$ also Q-Koszul? 

\smallskip
(b) If $A$ is standard Q-Koszul, under what
conditions is $A^\dagger$ also standard Q-Koszul? 

\smallskip
(c) In those cases in which the answer to (b) is positive and $A$ has a Lie theoretic (or geometric) interpretation, is there
a corresponding interpretation for $A^\dagger$?  The classic example is the case of parabolic-singular duality in 
the category $\sO$ for a complex semisimple Lie algebra; if \cite{BGS}.

\end{ques}

\begin{rem}\label{TKos} 
Suppose that $A$ is standard Q-Koszul. Then using the methods of \cite{CPS1} or \cite{PS1a}, 
the following product formula can be deduced for $n\in\mathbb N$, $M\in \sE^L$,
and $N\in\sE^R$,
\begin{equation}\label{productformula}\begin{aligned}
\dim\Ext^n_{A}&(M,N)=\\
& \dim\Ext^n_A(M,N)=\sum_{a+b=n}\sum_{\nu\in\Lambda}\dim\Ext^a_A(\Delta(\nu),N)\cdot\dim\Ext^b_A(M,\nabla(\nu)). \end{aligned}
\end{equation}
In particular, the above equation holds for $M=\Delta^0(\lambda)$, $N=\nabla_0(\mu)$ for $\lambda,\mu\in\Lambda$. 

We do not prove (\ref{productformula}) here, but refer instead to the paper \cite{PS14} in preparation. 
(The methods are similar to those in \cite{CPS1} or \cite{PS1a}, working with enriched Grothendieck groups.)
The formula (\ref{productformula}) suggests that Question \ref{Madsonques})(b) has a positive answer without
any further conditions on $A$, i.~e., if $A$ is standard Q-Koszul, then $A^\dagger$ is always standard Q-Koszul
as well. This insight comes from \cite{CPS2}. There, conditions are satisfied, in the context of Kazhdan-Lusztig
theory, which guarantee that the homological dual of a (suitably structured) quasi-hereditary algebra is
again quasi-hereditary, or even possesses stronger properties. The proof involves a
formula like that in (\ref{productformula}) with $M$ and $N$ irreducible. \end{rem}

\medskip
\medskip
\begin{center}{\Large\bf Part II: An example in characteristic $p=2$}\end{center}

\medskip\medskip
\begin{center} {\bf Some further notation}\end{center}

\medskip The paragraphs below briefly describe the setup/notation employed in several previous papers \cite{CPS7},
\cite{PS9a}, \cite{PS9}, \cite{PS11}, \cite{PS10}, \cite{PS12}, \cite{PS13a}, and \cite{PS13}.  This material
will be used in the sections below.

\smallskip
Let $G$ be a semisimple, simply connected algebraic group over an algebraically closed field $k$ of positive characteristic $p$.   Fix a maximal torus $T$, contained in a Borel subgroup $B$ corresponding to the negative roots, etc. We follow the notation of \cite{JanB} carefully, except, given a dominant weight $\lambda\in X(T)_+$, 
$\Delta(\lambda)$ (respectively, $\nabla(\lambda)$) is the standard (respectively, cosandard) module of highest
weight $\lambda$. Thus, $\Delta(\lambda)$ (respectively, $\nabla(\lambda)$) has irreducible head (respectively, socle)
$L(\lambda)$. The set $X(T)_+$ is partially ordered by setting $\lambda\leq\mu$ provided $\mu-\lambda$ is
a sum of positive roots. We will work with ideals in $X(T)_+$ or in $X_{\text{\rm reg}}(T)_+$ (the set of $p$-regular
dominant weights, given its induced poset structure). If $\Gamma$ is a finite ideal, in either $X(T)_+$ or 
$X_{\text{\rm reg}}(T)_+$,  there is a QHA algebra $A=A_\Gamma$ such that $\Amod$ is equivalent to the category of finite dimensional rational $G$-modules which have
composition factors $L(\gamma)$, $\gamma\in\Gamma$. Indeed, we can assume $A$ is an appropriate quotient
algebra of the distribution algebra of $G$. 

The algebra $A_\Gamma$ can also be studied using the quantum enveloping algebra $U_\zeta$ at an
$\ell(p)$th root of unity associated to $G$.   Here $\ell(p)=p$ if $p$ is odd, and $\ell(2)=4$; see (\ref{LP}) below
in \S7. There is an appropriate $p$-modular system $(K,\sO,k)$ such that $U_\zeta$ is regarded as a
$K$-algebra. In addition, there is a (split) QHA quotient algebra $A'=A'_\Gamma$ of $U_\zeta$ such that $A'$--mod
is equivalent to the category of finite dimensional (type 1, integrable) $U_\zeta$-modules with composition factors
$L'(\gamma):=L_\zeta(\gamma)$, $\gamma\in\Gamma$. In addition, there is an $\sO$-order $\wA$ such that
$\wA_K\cong A'$ and $\wA_k\cong A$. 

Given $\lambda\in \Gamma$, the irreducible $A'$-module $L_\zeta(\lambda)$ of highest weight
$\lambda$ contains a minimal admissible lattice $\wL_{\text{\rm min}}(\lambda)$ and a maximal
admissible lattice $\wL_{\text{\rm max}}(\lambda)$. We set $\rDelta(\lambda):=\wL_{\text{\rm min}}(\lambda)_k$ and 
$\rnabla(\lambda) =\wL_{\text{\rm max}}(\lambda)_k$. These modules play an important role in the modular
representation theory of $G$. See \cite{CPS7} and the other references above for more discussion.

Define a positively graded $\sO$-order $\gr \wA=\bigoplus_{n\geq 0}\gr_n\wA$ by setting 
$$\gr_n\wA=\frac{\wA\cap\rad^nA'}{\wA\cap\rad^{n+1}A'},$$
where $\rad^nA'=(\rad A')^n$.\footnote{By an $\sO$-order (or simply an order if $\sO$ is clear) we mean an $\sO$-algebra $\wB$ which is a free $\sO$-module of finite rank. A $\wB$-module $\wM$ is a lattice, if it is free of finite rank
over $\sO$.} Similarly, if $\wM$ is a $\wA$-lattice, there is a graded $\gr\wA$-lattice
$\gr\wM:=\bigoplus_{n\geq 0} \gr_n\wM$, where $\gr_n\wM=(\wrad^n\wM)/(\wrad^{n+1}\wM)$, with $\wrad^n\wM:=
\wM\cap(\rad^n\wM_K)$. Here $\rad^n\wM_K=(\rad A')^n\wM_K$.  We can then define the (non-negatively) graded
algebra by setting
\begin{equation}\label{wgrA}\wgr A := (\gr\wA)_k.\end{equation}
In addition, for an $\wA$-lattice $\wM$, put $M:=\wM_k$ and $\wgr M:=(\gr \wM)_k$.

The papers cited above contain many properties of the algebras $\wgr A$ and their modules $\wgr M$, as well
as alternative definitions for them (usually involving the small quantum enveloping algebra).  Some of these
results will be mentioned in the final two sections of this paper.  

But it is important to observe that the graded algebra $\gr A:=\bigoplus_{n\geq 0}\rad^nA/\rad^{n+1}A$ is often 
different from the algebra $\wgr A$. The algebra $\wgr A$ is generally quasi-hereditary as is its grade 0
part $(\wgr A)_0$, with the same weight poset as $\wgr A$. Indeed, $(\wgr A)_0$ appears
to be highly worthy of further study.  See Remarks \ref{finremark}.\smallskip

The next two sections work with a slight variation of $\wgr A$, replacing
$A$ by a Schur algebra. (In fact, the Schur algebra could be placed in the current context, but we omit the
details. In addition, Schur algebras are more familiar to most readers.)

\medskip
\section{Turner's counterexample.}\label{turner} 
Given a semisimple, simply connected algebraic group $G$, the main result in \cite{PS11} establishes that any
standard module $\Delta(\lambda)$ has a $\rDelta$-filtration, provided the characteristic $p$ of the base field
$k$ is sufficiently large (depending on the root system of $G$). However, when $p$ is small this result sometimes
fails. In fact, an unpublished counterexample has been shown to us by Will Turner which involves the Schur algebra
$S(5,5)$ when $p=2$. In this section, we consider Turner's example for $S(5,5)$ in some detail. In \S6, we show that
despite the counterexample,  the modules $\rDelta(\lambda)$ do fit into an elegant standard Q-Koszul theory in the
case for $S(5,5)$ and $p=2$.

Specifically, there is a  ``forced graded"
version $\wgr S$ of  $S:=S(5,5)$. This graded algebra is obtained in the same way as the algebra $\wgr A$ above
the start of this section, but using the complex $q$-Schur
algebra $S':=S_{q}(5,5)$, with $q=-1$.\footnote{In the notation of \cite{PW} and \cite{Cliff}, $S'$ would
be denoted $S_{\sqrt{-1}}(5,5)$. In addition, $S'$ is a homomorphic image of the quantum enveloping
algebra for $\mathfrak{gl}_5({\mathbb C})$ at $\sqrt{-1}.$}  Then the modules $\rDelta(\lambda)$ are the standard
modules for $(\wgr S)_0$, a quasi-hereditary quotient algebra of $S$ itself. The main result in \S6, which is
built on the results of this section, shows that
$\wgr S$ is standard Q-Koszul. The authors regard this highly non-trivial result in the smallest possible
characteristic as quite remarkable. Together with the large prime results mentioned in \S7, 
it inspires the conjectures given there.

The discussion requires some standard partition terminology. For a
positive integer $r$, let $\Lambda^+(r)$  (respectively, $\Lambda(r)$)
be the set of partitions (compositions) of $r$ with at most
$r$ nonzero parts.  Let $\Lamreg(r)\subset\Lambda^+(r)$ be the
$2$-regular partitions (i.~e., $\lambda\in\Lambda_{\text{\rm reg}}^+(r) \iff$ no part
of $\lambda$ is repeated $2$ or more times).\footnote{The 2-regular partitions
should not be confused with the set of $2$-regular weights in the sense of
alcove geometry.} If $\lambda\in\Lambda(5)$, $\lambda^\star$ is the dual
partition.

Let $k$ be an algebraically closed field of characteristic 2. Let $R=k{\mathfrak S}_5$ be the group algebra of the symmetric group
${\mathfrak S}_5$ over $k$. (This $R$ is obviously not to be confused with the root system which has the same
name.) For $\lambda\in\Lambda(5)$, let
${\mathfrak S}_\lambda$ be the Young subgroup of ${\mathfrak S}_5$
defined by $\lambda$. The Poincar\'e polynomial of ${\mathfrak
S}_\lambda$ is defined by $p_{{\mathfrak
S}_\lambda}(q)=\sum_{w\in{\mathfrak S}_\lambda}q^{\ell(w)}$. Taking
$\lambda=(5)$, ${\mathfrak S}_\lambda={{\mathfrak S}_5}$ has
Poincar\'e polynomial
$$ p_{{\mathfrak S}_5}(q)=\prod_{i=1}^5\frac{q^i-1}{q-1}.$$
In particular, we will need the fact that 
\begin{equation}\label{poincare}
r_{(3,2)}(q):=p_{{\mathfrak S}_5}(q)/p_{{\mathfrak S}_{(3,2)}}(q)=
(1+q^2)(1+q+q^2+q^3+q^4).\end{equation}
 Let ${^\lambda}{\mathfrak S}_5$ denote the set of distinguished
 right coset representatives of ${\mathfrak S}_\lambda$ in
 ${\mathfrak S}_5$, In particular, $r_{(3,2)}(q)=\sum_{d\in{^{(3,2)}{\mathfrak
 S}_5}}q^{\ell(d)}$.

Set $T_\lambda:=\ind_{{\mathfrak S}_\lambda}^{{\mathfrak S}_5}k$,
the right permutation module for ${\mathfrak S}_5$ acting on the set
of right cosets of the subgroup ${\mathfrak S}_\lambda$. If
$T=\bigoplus_{\lambda\in\Lambda(5)}T_\lambda$,  
$$S(5,5):=\End_R(T)$$
is. by definition, the 
Schur algebra of bidegree $(5,5)$ over $k$; see \cite{Green}. The category $S(5,5)$-mod
is a highest weight category with weight poset $\Lambda^+(5)$. For
$\lambda\in\Lambda^+(5)$, let $\Delta(\lambda)$ (respectively, $\nabla(\lambda)$
$L(\lambda)$) be the standard (respectively, costandard, irreducible) $S(5,5)$-module
indexed by $\lambda$.

Let $\modR$ be the category of finite dimensional right $R$-modules. (Recall that $R=k{\mathfrak S}_5$.) It is related to
the category $S$-mod of finite dimensional left $S$-modules by the contravariant {\it diamond
functors}:
$$\begin{cases} (-)^\diamond=\Hom_S(-,T):S{\text{\rm --mod}}\longrightarrow
\modR;\\
(-)^\diamond=\Hom_R(-,T):\modR\longrightarrow\Smod.\end{cases}
$$
For $\lambda\in\Lambda^+(5)$, $\Delta(\lambda)^\diamond\cong
S_\lambda$, the Specht module for $R$ indexed by
$\lambda$. The irreducible $R$-modules are indexed by the set of
$\Lamreg(5)$ of $2$-regular partitions; given
$\lambda\in\Lamreg(5)$, $D_\lambda$ denotes the associated
irreducible module. For $\lambda\in\Lambda^+(5)$,
\begin{equation}\label{irreduciblediamond}L(\lambda)^\diamond\cong\begin{cases} D_{\lambda^\star},
\,\,\lambda^\star\in\Lamreg(5);\\ 0,\,\,{\text{\rm
otherwise.}}\end{cases}\end{equation}
 (We remark that the description of $L(\lambda)^\diamond$ requires a twist by the
 sign representation in characteristics different from 2.)
  Also, for $\lambda,\mu\in\Lambda^+(5)$,
$L(\lambda)$ and $L(\mu)$
 are in the same block if and only if the partitions $\lambda,\mu$
 have the same $2$-core.

Because $_ST=R^\diamond$ and $R_R$ is a direct summand of $T_R$, it
follows that $_ST$ is a projective $S$-module. It is known that
$_ST$ is self-dual, so that $T$ is also an injective $S$-module (and hence a tilting module) and
the functor $(-)^\diamond=\Hom_S(-,T)$ is exact.

For $\lambda\in\Lambda^+(5)$, let $X(\lambda)$ be the tilting module
for $S$ defined by $\lambda$. It has a $\Delta$-filtration with
bottom section $\Delta(\lambda)$ and higher sections $\Delta(\mu)$
for partitions $\mu<\lambda$ (in the dominance ordering) having the
same $2$-core as $\lambda$.
\footnote{More precisely, this observation can be quickly
reduced to the case of Young modules for symmetric groups, where it is well-known.}  

A PIM $Y$ in $R_R$ has irreducible socle $D_{\mu^\star}$ with $\mu^\star\in\Lambda^+(5)$. Thus,
the corresponding summand $Y^\diamond$ of $T$ is the projective cover $P(\mu)$ of $L(\mu)$. If $\nu\in\Lambda^+(5)$, then
$$[P(\mu):\Delta(\nu)]=[Y:S_\nu]=[S_\nu:D_{\mu^\star}].$$
Taking $\mu=(2^2,1)$, $\dim\,D_{\mu^\star}=4$, and $D_{\mu^\star}$ is the unique non-trivial principal
block irreducible module. It follows easily that $P(2^2,1)$ is filtered by standard modules $\Delta(\nu)$ with
$\nu=(2^2,1), (3,1^2),$ and $(3,2)$, each appearing with multiplicity 1. Since we know this PIM for $S$ is an indecomposable tilting module, it must be $X(3,2)$. 
Since $X(3,2)=P(2^2,1)$ has simple head $L(2^2,1)$ and is 
self dual, $X(3,2)$ has head and socle isomorphic to $L(2^2,1)$. In
particular, this means that 
\begin{equation}\label{sox} {\text{\rm $L(2^2,1)$ is the socle of
$\Delta(3,2)$.}}\end{equation}

On the other hand, $\rDelta(1^5)\cong L(1^5)$,
$\rDelta(3,2)\cong L(3,2)$, and $\rDelta(3,1,1)\cong L(3,1,1)$.
(The last two isomorphisms are obtained by computing dimensions using versions of
Steinberg's tensor product theorem; see the table below.)

\medskip\noindent{Claim:} $\rDelta(2^2,1)=\Delta(2^2,1)\not\cong L(2^2,1)$.
(Assuming this fact, it follows that $\Delta(3,2)$ does not have a
$\rDelta$-filtration.)

To check the claim, it will be necessary to use the
description of the $\rDelta$-modules from the quantum point of view.
This uses the theory of $q$-Schur algebras as well as
their relationship to Hecke algebras by means of quantum Schur-Weyl
duality \cite{PS1}, \cite{DPS1}.

Let $R'=H({\mathfrak S}_5)$ be the Hecke algebra over $\mathbb C$ of
${\mathfrak S}_5$ with $q=-1$. It has standard basis $\tau_w$,
$w\in{\mathfrak S}_5$, satisfying the familiar relations
\cite[(1.1)]{DPS1}. For $\lambda\in\Lambda(5)$, let
$x_\lambda=\sum_{w\in {\mathfrak S}_\lambda}\tau_w$ and set
$T'_\lambda= x_\lambda R'$, the $q$-permutation module defined by
$\lambda$. In particular, if $\lambda=(5)$, $T'_{(5)}={\mathbb
C}x_{(5)}$ is the one-dimensional index representation of $R'$. We
have, for any $\lambda$,
$$\dim\,\Hom_{R'}(T'_\lambda,T'_{(5)})=\dim\,\Hom_{R'}(T'_{(5)},T'_\lambda)=1,$$
by Frobenius reciprocity and the fact that the $q$-permutation
modules are self-dual.

In particular, let $\lambda=(3,2)$, and consider the nonzero
homomorphisms
$$\phi:T'_{(5)}\to T'_{(3,2)}, \quad x_{(5)}\mapsto
\sum_{d\in{^{(3,2)}{\mathfrak S}_5}}x_{(3,2)}\tau_d$$ and
 $$\psi:T'_{(3,2)}\to T'_{(5)},\quad x_{(3,2)}h\mapsto x_{(5)}h.$$
 Using (\ref{poincare}),
 $$
 \begin{aligned} \psi\circ\phi  &= \sum_{d\in{^{(3,2)}}{\mathfrak
 S}_5}q^{\ell(d)}x_{(5)}\\
 &=r_{(3,2)}(-1)x_{(5)} = 2x_{(5)}.\end{aligned}$$

 For $\lambda\in\Lambda^+(5)$, let $S'_\lambda$ (respectively, $Y'_\lambda$, $Y^{\prime\natural}_\lambda$)
be the corresponding Specht (respectively, Young, twisted Young module)
module; see \cite[\S2]{PS1}. Then $Y^{\prime\natural}_\lambda\cong
{Y'_{\lambda'}}^\Phi$, using the notation of \cite[(2.0.4)]{PS1}
with the involution $\Phi:R'\to R'$ defined in \cite[(2.0.3)]{PS1}.

Putting $T'=\bigoplus_{\lambda\in\Lambda(5)} T'_\lambda$, let
$$S'=S_q(5,5)=S_{-1}(5,5):=\End_{R'}(T')$$
 be the $q$-Schur algebra of  bidegree $(5,5)$ at
$q=-1$. The algebra $S'$ is quasi-hereditary with weight poset
$\Lambda^+(5)$, standard objects $\Delta'(\lambda)$ and irreducible
objects $L'(\lambda)$ for $\lambda\in\Lambda^+(5)$.

 Consider the tilting module $X'(2^2,1)$ for $S'$ corresponding to the
 partition $(2^2,1)$. Then
 $$X'(2^2,1)^\diamond \cong Y_{(3,2)}^{\prime\Phi}.$$
(To see this, one can use \cite[Prop. 7.3(d)]{DPS1} and \cite[Lemma
1.5.2]{CPSMem}.)  Now the possible $\Delta'$ sections of $X'(2^2,1)$
are $\Delta'(2^2,1)$ (with multiplicity 1) and $\Delta'(1^5)$ of an
undetermined multiplicity. Therefore, $Y_{(3,2)}^{\prime\Phi}$ has a
filtration with sections $S'_{(2^2,1)}=\Delta'(2^2,1)^\diamond$ and
$S'_{(1^5)}={\Delta'}(1^5)^\diamond$. Since ${S'_\lambda}^\Phi$ is
dual to $S'_{\lambda'}$ and since $Y'_\lambda$ is self-dual, it
follows that $Y'_{(3,2)}$ has a filtration with sections
$S'_{(3,2)}$ (with multiplicity 1) and possibly $S'_{(5)}$ (having the same
multiplicity as $\Delta'(1^5)$ does in $X'(2^2,1)$). But
$Y'_{(3,2)}$ is an indecomposable summand of $T'_{(3,2)}$. We have
already proved that any nonzero homomorphism $T'_{(3,2)}\to
T'_{(5)}$ or $T'_{(5)}\to T'_{(3,2)}$ splits. It follows that
\begin{equation}\label{Y'_(3,2)} Y'_{(3,2)}\cong S'_{(3,2)}.\end{equation}
Therefore, $X'(2^2,1)\cong
\Delta'(2^2,1)$. But $X'(2^2,1)$ is self-dual, so that
\begin{equation}\label{Delta'(2^2,1)}
\Delta'(2^2,1)\cong L'(2^2,1).\end{equation}
This forces $\rDelta(2^2,1)\cong
\Delta(2^2,1)$. To finish the Claim, it must be checked that
$\Delta(2^2,1)$ is not irreducible. Otherwise, $S_{(2^2,1)}=\Delta(2^2,1)^\diamond$
is irreducible of dimension 5. But the only possible dimensions of irreducible 
${\mathfrak S}_5$-modules in characteristic 2 are 1 and 4; see Carlson \cite{Car}, for example.
This completes the proof of the following result.

\begin{prop} (Turner) For the Schur algebra $S(5,5)$ in characteristic 2, the standard module
$\Delta(3,2)$ does not have a $\rDelta$-filtration.\end{prop}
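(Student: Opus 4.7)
The plan is to extract a contradiction from the socle structure of $\Delta(3,2)$. From equation (\ref{sox}), the self-duality of the tilting module $X(3,2) = P(2^2,1)$ already forces $\soc\Delta(3,2) \cong L(2^2,1)$. If $\Delta(3,2)$ were to admit a $\rDelta$-filtration, its bottom section would be some $\rDelta(\nu_1)$ whose socle injects into $\soc\Delta(3,2)$, forcing $\soc\rDelta(\nu_1) \cong L(2^2,1)$. My strategy is to eliminate each of the four weights $\nu_1$ in the principal $2$-block, namely $(1^5), (2^2,1), (3,1,1), (3,2)$.

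For $\nu_1 \in \{(1^5), (3,1,1), (3,2)\}$, dimension comparisons via the quantum form of Steinberg's tensor product theorem at $q=-1$ give $\rDelta(\nu_1) \cong L(\nu_1)$, and since $L(\nu_1) \not\cong L(2^2,1)$ in each case, none of these can serve as the bottom section. The interesting case is $\nu_1 = (2^2,1)$, and here the key claim to establish is that $\rDelta(2^2,1) \cong \Delta(2^2,1)$ is \emph{not} irreducible. Granting this claim, since $L(2^2,1)$ sits as the simple head of $\Delta(2^2,1)$ with composition-factor multiplicity one, the socle of $\Delta(2^2,1)$ will be a proper submodule disjoint from the unique copy of $L(2^2,1)$, and so cannot be $L(2^2,1)$. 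That would complete the contradiction.

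The heart of the argument will thus be the identification $\rDelta(2^2,1) = \Delta(2^2,1)$ together with its non-irreducibility. I would first pass to the $q$-Schur algebra $S' = S_{-1}(5,5)$ at $q=-1$ and prove $\Delta'(2^2,1) \cong L'(2^2,1)$; the standard lattice comparison would then give $\rDelta(2^2,1) = \Delta(2^2,1)$ upon reduction modulo $p$. To see the quantum irreducibility, I would analyze the tilting module $X'(2^2,1)$ via Schur-Weyl duality, which sends it (up to the involution $\Phi$) to the Young module $Y'_{(3,2)}$ over the Hecke algebra. The only possible Specht sections of $Y'_{(3,2)}$ are $S'_{(3,2)}$ and $S'_{(5)}$, so ruling out the latter suffices; this reduces to showing that the natural composition $T'_{(5)} \overset{\phi}{\longrightarrow} T'_{(3,2)} \overset{\psi}{\longrightarrow} T'_{(5)}$ is the nonzero scalar $r_{(3,2)}(-1) = 2$ coming from (\ref{poincare}). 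Since $2 \neq 0$ in $\mathbb{C}$, the map splits, so $Y'_{(3,2)} \cong S'_{(3,2)}$ is irreducible, and unwinding via the diamond functor yields $\Delta'(2^2,1) \cong L'(2^2,1)$. The non-irreducibility of $\Delta(2^2,1)$ will then be immediate from a dimension count: $\dim \Delta(2^2,1) = 5$, while irreducible $k\mathfrak{S}_5$-modules in characteristic $2$ have dimensions $1$ and $4$ only, by \cite{Car}.

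The principal obstacle, I expect, is precisely this quantum step: verifying that $\Delta'(2^2,1)$ is irreducible at $q=-1$. It hinges on the nonvanishing $r_{(3,2)}(-1) = 2 \neq 0$, a feature special to $q$-characteristic zero and to this particular pair of partitions; without it the decomposition $Y'_{(3,2)} \cong S'_{(3,2)}$ would fail, and the rest of the argument would collapse.
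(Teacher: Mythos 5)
Your proposal follows the paper's argument step by step: you establish $\soc\Delta(3,2)\cong L(2^2,1)$ from the self-dual tilting module $X(3,2)=P(2^2,1)$, identify $\rDelta(1^5),\rDelta(3,1^2),\rDelta(3,2)$ as irreducible, prove $\rDelta(2^2,1)\cong\Delta(2^2,1)$ is reducible by passing to the $q$-Schur algebra and computing $r_{(3,2)}(-1)=2\neq 0$, and finish by eliminating each candidate bottom section of a putative $\rDelta$-filtration via socle comparison---this last step being exactly the synthesis the paper leaves as ``Assuming this fact, it follows.'' One slip to correct: you write $\dim\Delta(2^2,1)=5$, but in fact $\dim\Delta(2^2,1)=75$; the $5$-dimensional object is the Specht module $S_{(2^2,1)}=\Delta(2^2,1)^\diamond$, and the dimension obstruction applies to it only after passing through the exact contravariant diamond functor, which would carry an irreducible $\Delta(2^2,1)$ to a $5$-dimensional irreducible $k\mathfrak{S}_5$-module, impossible since irreducible $k\mathfrak{S}_5$-modules in characteristic $2$ have dimension $1$ or $4$.
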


\section{Continuation: $\wgr S(5,5)$ is a standard Q-Koszul algebra in characteristic 2.}

We continue our discussion of $S(5,5)$, focusing on its principal block
$A$.\footnote{There is only one other block which is easily handled by ad hoc methods.} The partitions associated
to its irreducible modules form a poset $\Lambda$, given by the first column of the table below.

Only the trivial module $D_{(5)}$ for $k{\mathfrak S}_5$ has dimension 1, so the other modules all have
dimension 4. Applying (5.0.13) and the fact that $S_{(2^2,1)}$ has dimension 5,
gives that $\Delta(2^2,1)$ has two composition
factors $L(2^2,1)$ and $L(1^5)$, each occurring with multiplicity 1.
The additional information in the following table can be readily checked using the Weyl dimension formula
and the  Steinberg tensor product
theorem (both the characteristic 2 version and the quantum version \cite{Cliff}).  As mentioned above, the  modules $L(\lambda)$ listed are precisely those in the
``principal" block for $S(5,5)$ in characteristic 2 (associated to the determinant representation
$\Delta(1^5)=L(1^5)$). Dimensions of the corresponding irreducible modules for the characteristic 0 quantum
$q$-Schur algebra, $q=-1$, are also given. We denote the ``principal block" of this $q$-Schur algebra by $A'$, and 
generally decorate with the ``prime" symbol objects associated to $A'$.

\medskip
\begin{center}\begin{tabular}{ | l || r | r | r | }
\hline
$\lambda$ &  $\dim\Delta(\lambda)=\dim\Delta'(\lambda)$ & $\dim L'(\lambda)$ & $\dim L(\lambda)$ \\ \hline
\hline
($1^5$) & 1 & 1 & 1 \\ \hline
($2^2,1$) & 75 & 75 & 74\\ \hline
($3,1^2$) & 126 & 50 & 50 \\ \hline
($3,2$) & 175 & 50 & 50
\\ \hline
($5$) & 126 & 75 & 25 \\ \hline
\end{tabular}\end{center}
\begin{center} Various dimensions of irreducible and standard modules for $S=S(5,5)$ in characteristic 2 and
$S'=S_{-1}(5,5)$, in characteristic 0
\end{center}

\medskip\medskip
\medskip\medskip
Now we give the matrix $D$ of decomposition numbers $[\Delta(\lambda):L(\mu)]$. The entries $x$ and $y$ (which will be shown shortly to  be equal to 1) are non-negative integer values, momentarily unknown, 
with $x+y=2$. In this matrix, all entries and the constrains on $x$ and $y$ can can be determined
solely from

\begin{enumerate}
\item[(1)] The previous table;
\item[(2)] The fact that $[\Delta(\lambda):L(\mu)]\not=0$ implies $\mu\leq\lambda$;
\item[(3)] $[\Delta(\lambda):L(\lambda)]=1$;
\item[(4)]   $[\Delta(\lambda):L(1^5)]=[S_\lambda:D_{(5)}]\leq\dim S_\lambda\leq 6$.
\end{enumerate}
 
 \medskip\medskip
\begin{center}
\begin{tabular}{|c || c| c| c| c| c|}
\hline
& $L(1^5)$ & $L(2^2,1)$ & $L(3, 1^2)$ & $L(3,2)$ & $L(5)$ \\ \hline \hline
$\Delta(1^5)$ & 1 & 0 &0 & 0 & 0 \\ \hline
$\Delta(2^2,1)$ & 1 & 1 & 0 &0 & 0\\ \hline
$\Delta(3,1^2)$ & 2 & 1 &1 & 0 & 0\\ \hline
$\Delta (3,2)$ & 1 &1&1&1 & 0 \\ \hline
$\Delta(5)$ & 1 & 0 & $x$ &  $y$
& 1 \\ \hline
\end{tabular}\end{center}
 \begin{center} Decomposition matrix $D$\end{center}

\medskip
The first row in the Cartan matrix $C=D^t\cdot D$ has entries $8,4, 3+x, 1+y,1$. No row in $C$, other than the first row, can have three entries as large as 4 (given the constraint that $x+y= 2$). However, the
Cartan matrix corresponding to the PIMs,  given in \cite{Car} for a block in the algebra $H$ Morita equivalent to
$S(5,5)$, has a row with entries 8,4,4,2,1
(in some order).   For the reader's convenience, this Cartan matrix is given below, with rows and columns
as indexed in \cite{Car}, but in a different order.

Thus, the first row computed above must correspond to the unique row with these
entries in \cite{Car}. Comparison of rows forces $x=1$. Thus, $y=1$.  At this point, the matrix
$C=D^t\cdot D$ agrees with that in \cite{Car} (after a simultaneous reordering of rows and columns) as listed
below. No further simultaneous reordering of rows and columns leads to the same $5\times 5$ matrix.
Since this matrix is $C$ above, the conversion table below, of partitions to labels in \cite{Car}, is uniquely determined.

\begin{center}
\begin{tabular}{|c ||c | c| c| c| c|}\hline \hline
& $L(7)$ & $L(2)$ & $L(6)$ & $L(5)$ & $L(4)$ \\ \hline
$P(7)$ & 8 &4&4&2&1\\ \hline
$P(2)$& 4& 3& 2 & 1& 0\\ \hline
$P(6)$ & 4& 2 & 3 & 2 &1\\ \hline
$P(5)$ & 2 & 1 &2 & 2 & 1\\ \hline
$P(4)$ &1 & 0 & 1 & 1 & 1\\ \hline
\end{tabular}\end{center}

\begin{center} Cartan matrix $C$ for the principal block for $S(5,5)$ in characteristic 2, labeling as in \cite{Car}\end{center}

\medskip\medskip\begin{center}
\begin{tabular}{|c|c|}\hline
$(1^5)$ & 7\\ \hline
$(2^2,1)$ & 2\\ \hline
$(3,1^2)$ & 6\\ \hline
$(3,2)$ & 5\\ \hline
$(5)$ & 4\\ \hline
\end{tabular}\end{center}
\begin{center} Conversion table from partitions to labeling in \cite{Car}\end{center}
\medskip

The decomposition matrix $D'$ for the corresponding block of $S'$ can be easily obtained using
entries from $D$, the equality $\Delta'(2^2,1)=L'(2^2,1)$, and (5.0.12). It is given below, 
 indexing these modules with the integer labels above.
In this terminology, (5.0.12) implies that $[\Delta'(4):L'(5)]=[P'(5):\Delta'(4)]=0$. 
\medskip\medskip

\begin{center}
\begin{tabular}{|c||c|c|c|c|c|}
\hline
& $L'(7)$ & $L'(2)$ & $L'(6)$ & $L'(5)$ & $L'(4)$\\ \hline\hline
$\Delta'(7)$ & 1 & 0 &0 &0 &0\\ \hline
$\Delta'(2)$ & 0&1 &0&0&0\\ \hline
$\Delta'(6)$ & 1&1&1&0&0\\ \hline
$\Delta'(5)$ & 0 &1&1&1&0\\ \hline
$\Delta'(4)$ & 1 &0&1&0&1\\ \hline
\end{tabular}\end{center}
\begin{center} Decomposition matrix $D'$ for principal block for $S_{-1}(5,5)$ in characteristic 0\end{center}

\medskip
We now describe the radical and socle series
for the PIMs corresponding to the irreducible modules $L(7), L(2), L(6), L(5)$ and $L(4)$, as given in \cite{Car}.
\begin{center}
\begin{tabular}{|c|c|c|c|c|}
\hline
$P(7)$ & $P(2)$ &$P(6)$ &$P(5)$ &$P(4)$\\ \hline
\hline
$L(7)$ & $L(2)$ & $L(6)$ & $L(5)$ & $L(4)$\\ \hline
$L(2), L(6)$ & $L(7)$ & $L(5), L(7)$ & $L(4)$, $L(6)$ &$L(5)$\\ \hline
$L(5), L(7), L(7)$ & $L(6)$ & $L(2), L(4)$ & $L(5), L(7)$& $L(6)$\\ \hline
$L(2), L(4), L(6)$ & $L(5), L(7)$ & $L(5), L(7)$ & $L(2), L(6)$ & $L(7)$\\ \hline
$L(5), L(7), L(7)$ & $L(2)$ & $L(6), L(6)$ & $L(7)$ & \\ \hline
$L(2), L(6)$ & $L(7)$&  $L(7)$ &  &\\ \hline
$L(7), L(7)$& $L(6)$ & $L(2)$ & & \\ \hline
$L(2), L(6)$ & $L(7)$ & $L(7)$ & & \\ \hline
$L(7)$ & $L(2)$ & & &  \\ \hline
\end{tabular}\end{center}

\medskip
\begin{center} Radical series for PIMs $P(7), P(2), P(6), P(5), P(4)$ \end{center}

\medskip\medskip
\begin{rem}\label{rem4.2} We record the interesting facts that the radical series table above
shows that $\gr S(5,5)$, the graded algebra obtained from $S(5,5)$ by grading it through its radical series
filtration, is neither Koszul nor quasi-hereditary.  First, suppose that $\gr S(5,5)$ is Koszul and consider
the minimal projective resolution of $L(2)$. It begins as 
$\gr P(7)\overset\alpha\longrightarrow P(2)\twoheadrightarrow L(2)$. From the table immediately above, the
kernel of $\alpha$ must be an image of $\gr P(2)$ and must also contain $L(4)$ as a composition factor. But $L(4)$
does not appear as a composition factor of $P(2)$, a contradiction.  Hence, $\gr S(5,5)$ is not Koszul. Secondly, suppose that $\gr S(5,5)$ is QHA. Except for $\gr P(4)$, the head of each graded PIM occurs with multiplicity $>1$ in the PIM. 
It follows that $\gr P(4)$ is a standard module with head $L(4)$. Also, the weight ``4" is maximal. Obviously,  $\dim\Hom_{\gr S(5,5)}(\gr P(4),\gr P(6))=1$.  In particular, using the maximality of ``4", the standard module
$\gr P(4)$ must appear with multiplicity 1 in a standard module filtration of $\gr P(6)$, and may be taken to occur at the bottom, as a submodule. Also, by (\ref{gradedExt}), 
$$\dim\grHom_{\gr S(5,5)}(\gr P(4)\langle m\rangle,\gr P(6))=1, \quad{\text{\rm for a unique $m$.}}$$
 Necessarily, $m=2$. The resulting graded
map must be an injection, since its ungraded version is an injection. Thus, $L(7)\langle 5\rangle$ lies in the socle of $\gr P(6)$. 
The table above shows that $\gr P(6)$, viewed as a graded $(\gr A)_0$-module, has a unique composition
 factor $L(7)\langle 0\rangle$, and it occurs in---and exactly as---the graded $(\gr A)_0$-submodule 
 $\gr P(6)_5$. It follows that the latter $(\gr A)_0$-submodle is contained in the $\gr A$-socle of $\gr P(6)$. So, 
 $0=(\gr A)_1(\gr P(6))_5=(\gr P(6))_6\not=0$, a
contradiction. Of course, the algebra $S(5,5)$ is quasi-hereditary, so the above discussion shows that, in general,
the quasi-hereditary property is not preserved under
 ``passing to the radical series grading." In addition, $S(5,5)$ is not
Koszul. Otherwise, $\gr S(5,5)\cong S(5,5)$, a general property of Koszul algebas. However, this isomorphism
gives two contradictions, since $\gr S(5,5)$ has been shown to be neither Koszul nor, unlike $S(5,5)$, quasi-herediitary. \end{rem}

\begin{center}\begin{tabular}{ |c|c|c|c|c|}
\hline
$P(7)$ & $P(2)$ &$P(6)$ &$P(5)$ &$P(4)$\\ 
\hline\hline
$L(7)$ & $L(2)$ & $L(6)$ & $L(5)$ & $L(4)$ \\ \hline
$L(2), L(6)$& $L(7)$ & $L(7)$ & $L(4), L(6)$ & $L(5)$ \\ \hline
$L(7), L(7)$ & $L(6)$ & $L(2)$ & $ L(5), L(7)$& $L(6)$ \\ \hline
$L(2), L(6)$ & $L(7)$ & $L(5),L(7)$ & $L(2), L(6)$ & $L(7)$\\ \hline
$L(5), L(7), L(7)$ & $L(2)$ & $L(4), L(6)$ & $L(7)$ & \\ \hline
$L(2), L(4), L(6)$ & $L(5), L(7)$ & $L(5), L(7)$ & & \\ \hline
$L(5), L(7), L(7)$ & $L(6)$& $L(2), L(6)$ & & \\ \hline
$L(2), L(6)$ & $L(7)$ & $L(7)$ & &\\ \hline
$L(7)$ & $L(2)$ &&&\\ \hline
\end{tabular}
\end{center}
\begin{center} Socle series for PIMS $P(7), P(2), P(6), P(5), P(4)$\end{center}

\medskip
Also, we have the following radical series for the standard modules. It is the same as the socle series.

\medskip
\begin{center}\begin{tabular}{| c| c| c| c| c|}
\hline
$\Delta(7)$ &$ \Delta(2)$ & $\Delta(6)$ & $\Delta(5)$ & $\Delta(4)$ \\ \hline\hline
$L(7) $ & $L(2)$ & $L(6)$ & $L(5)$ &$L(4)$  \\ \hline
 & $L(7)$ & $L(7)$ & $L(6)$ & $L(5)$\\ \hline
 && $L(2)$ & $L(7)$ & $L(6)$\\ \hline
 && $L(7)$ & $L(2)$ & $L(7)$\\ \hline
 \end{tabular} \end{center}
 \medskip\begin{center} The radical/socle series for standard modules for $S(5,5)$\end{center}
 
 \medskip
 The first and second columns are clear from the decomposition matrix $D$ above. The column
 for $\Delta(6)$ follows by inspecting first the socle series for $P(6)$ and then its radical series.
 Standard quasi-hereditary theory says that $\Delta(6)$ is a quotient of $P(6)$, and is the unique
 quotient with the composition factors of $\Delta(6)$ (counting multiplicities). Also, $\Delta(4)=P(4)$
 Now consider $\Delta(5)$. Finally, $\Delta(5)\cong P(5)/\Delta(4)$. By (\ref{sox}), $\Delta(5)$
 has socle $L(2)$. (Note also (from the radical series of $P(7)$) that is no non-trivial extension between $L(4)$ and $L(7)$.  It follows that the description of $\Delta(5)$ is as indicated.
 
 \medskip
 
 Now we begin to discuss the quantum case. First, the table below describes the radical series for the
 quantum standard modules $\Delta'(\lambda)$.
 \medskip
 
 \begin{center}\begin{tabular}{| c| c| c| c| c|}
\hline
$\Delta'(7)$ &$ \Delta'(2)$ & $\Delta'(6)$ & $\Delta'(5)$ & $\Delta'(4)$ \\ \hline\hline
$L'(7) $ & $L'(2)$ & $L'(6)$ & $L'(5)$ &$L'(4)$  \\ \hline
 & & $L'(7)$, $L'(2)$ &$L'(6)$& $L'(6)$\\ \hline
 &&& $L'(2)$ & $L'(7)$ \\ \hline
\end{tabular} \end{center}

 \smallskip
 \begin{center} Radical=socle series for quantum standard modules for $S_{-1}(5,5)$ in characteristic 0
 \end{center}
 
 To see this, first note that
 $\Delta'(7)=L'(7)$ and $\Delta'(2)=L'(2)$ from the decomposition matrix. Suppose that $L'(\lambda)$ is a submodule of $\Delta'(\mu)$. Then 
 $L'(\lambda)\cap\wDelta(\mu)$ is both a full lattice in $L'(\lambda)$ and a pure submodule of $\wDelta(\mu)$. Thus,
 some composition factor of $\rDelta(\lambda)$ must appear in the socle of $\Delta(\mu)$.  Consequently, the
 socle of $\Delta'(4)$ must be $L'(7)$ and the socle of $\Delta'(5)$ must be $L'(2)$.  Using also
  the
 quantum decomposition matrix, we get the columns for $\Delta'(5)$ and $\Delta'(4)$. From this information,
 $\Ext^1(L'(6),L'(2))\not=0\not=\Ext^1(L'(6),L'(7))$ so the middle column in the table follows.
 
 We next describe the $\rDelta$-modules in the following table which gives their radical and socle series. The 
 $\rnabla$ are described by turning the diagrams upside down.  
 
  \begin{center}\begin{tabular}{| c| c| c| c| c|}
\hline
$\rDelta(7)$ &$ \rDelta(2)$ & $\rDelta(6)$ & $\rDelta(5)$ & $\rDelta(4)$ \\ \hline\hline
$L(7) $ & $L(2)$ & $L(6)$ & $L(5)$ &$L(4)$  \\ \hline
 &  $L(7)$& && $L(5)$\\ \hline
\end{tabular} \end{center}

 Before giving the next table, note there is a natural ``Loewy index" $\ell(P'(\lambda),\Delta'(\mu))$ that can be
 defined for any $\Delta$-filtration section $\Delta(\mu)$ of a PIM $P(\lambda)$.
 for a quasi-hereditary algebra with weight poset $\Lambda$.  For simplicity, we consider only the case when $\Delta(\mu)$ appears with multiplicity one as a section of $P(\lambda)$.\footnote{More generally, define
 $\ell(P(\lambda),\Delta(\mu))$ to be one less than the maximum Lowey length of $f(P(\lambda))$, with
 $f$ ranging over all $f\in\Hom(P(\lambda),\nabla(\mu))$ (or just over a basis of the latter space). The definition can be used to define $\ell(M,\Delta(\mu))$ for any finite dimensional module $M$ for the underlying quasi-hereditary algebra.}
 Namely, using the ``prime" notation here, extend the natural
 map $\Delta'(\mu)\to\nabla'(\mu)$ to a map $f:P'(\lambda)\to\nabla'(\mu)$.  The multiplicity one assumption
 guarantees uniqueness of such an extension. Now define $\ell(P'(\lambda),\nabla'(\mu))$ to be 
 the Loewy length of $f(P'(\lambda))-1$. In the table below these Loewy lengths are indicated in the
 left hand column. They can be computed
 using the previous table and the natural duality on $S'(5,5)$. 
  
 \medskip
 
 \begin{center}
\begin{tabular}{|c ||c |c |c |c |c |}\hline

& $P'(7)$ & $P'(2)$ & $P'(6)$ & $P'(5)$ & $P'(4)$ \\ 
\hline\hline $0$ & $\Delta'(7)$ & $\Delta'(2)$ &  $\Delta'(6)$&        $\Delta'(5)$  & $\Delta'(4)$       
  \\ \hline
$1$ & $\Delta'(6)$ & $\Delta'(6)$      & $\Delta'(5), \Delta'(4)$ &         &        \\ \hline
$2$ & $\Delta'(4)$ & $\Delta'(5)$   &  &  &                        \\ \hline
\end{tabular}
\end{center}

 \medskip
 According to \cite{SVV}, the algebra $A'$ is standard Koszul.\footnote{A Koszul algebra is standard Koszul if it is
 quasi-hereditary and if its standard modules have a ``linear" projective resolution. Linear here means that the
 terms in cohomological degree $-n$ are generated in grade $n$. See \cite{Maz}.} In particular, $\gr A'\cong A'$ is a quasi-hereditary algebra with $\gr P'(\lambda)\cong P'(\lambda)$ and $\gr\Delta'(\lambda)\cong\Delta'(\lambda)$
 as graded modules.  Each PIM $\gr P'(\lambda)$ has a filtration by shifted standard modules $\gr\Delta'(\mu)\langle s\rangle$, for $s\geq 0$. The multiplicity 
 $$[\gr P'(\lambda):\gr\Delta'(\mu)\langle s\rangle]=\dim \grHom_{\gr A'}(\gr P'(\lambda),\gr^\circ\nabla'(\mu)\langle s\rangle).$$
  That is, this
 number is precisely the multiplicities of $L'(\lambda)$ in the $-s$th socle layer of $\gr^\circ\nabla(\mu)$,
 or equivalently, in the $(-s)$th socle layer of $\nabla'(\mu)$ itself. In our case, where $[P'(\lambda):\Delta'(\mu)]
 \leq 1$, this multiplicity in 1 if $\ell(P'(\lambda),\Delta'(\mu))=s$, and 0 otherwise. Thus, the table above
 may be reinterpreted as giving the required graded multiplicities. We repeat it for emphasis, with the
 left hand column now giving graded multiplicity information.

  \medskip
 \begin{center}
\begin{tabular}{|c ||c |c |c |c |c |}\hline

& $\gr P'(7)$ & $\gr P'(2)$ & $\gr P'(6)$ & $\gr P'(5)$ & $\gr P'(4)$ \\ 
\hline\hline $0$ & $\gr \Delta'(7)$ & $\gr \Delta'(2)$ &  $\gr \Delta'(6)$&        $\gr \Delta'(5)$  & $\gr\Delta'(4)$       
  \\ \hline
$1$ & $\gr\Delta'(6)\langle 1\rangle$ & $\gr\Delta'(6)\langle 1\rangle $      & $\gr\Delta'(5)\langle 1\rangle\oplus \gr\Delta'(4)\langle 1\rangle$ &         &        \\ \hline
$2$ & $\gr\Delta'(4)\langle 2\rangle$ & $\gr\Delta'(5)\langle 2\rangle$   &  &  &                        \\ \hline
\end{tabular}
\end{center}

\medskip
Using the standard Koszulity of $A'$ and the criterion \cite[Thm. 4.17 ]{PS10}, it can be shown that $\gr \wS(5,5)$ is a graded integral quasi-hereditary 
algebra, in the sense of \cite{CPS1a}.\footnote{Since $\gr A'$ is a QHA here, the criterion requires only that each module $\gr\wDelta(\lambda)$
have an irreducible head for each $\lambda\in\Lambda$. Equivalently, it much be shown that
each  $\wgr\Delta(\lambda)$ has an irreducible head.  This is also equivalent to the surjectivity of the natural
map $\wgr P(\lambda)\to\wgr\Delta(\lambda)$, which is equivalent to the term-by-term surjectivity of each of the
filtration terms used in forming these graded modules. Surjectivity of the 0th and 1st filtration term is automatic, and this fact alone gives a simple had for $\wgr\Delta(2)$, $\wgr\Delta(7)$, and $\wgr\Delta(6)$. Surjectivity for $\wgr\Delta(4)$
is automatic, since $P(4)=\Delta(4)$. The relevant filtrations for $P(5)$ and $\Delta(5)$, the last case, can be analyzed using the splitting $\wP(5)_K\cong\Delta'(4)\oplus\Delta'(5)$.
 } It can also be
shown that $\wgr A$ has an anti-involution inherited from an integral form of $A$ and which preserves
grades. Moreover, composition with the usual linear dual functor, induces a duality $X\mapsto X^\diamond$
of $\Agrmod$ and $\wgr A$-mod, which irreducible modules of pure grade 0, and sending
$L\langle r\rangle$ to $L\langle -r\rangle$. If $X$ is any $A$-module, let $\wgr^\diamond X$ denote the
graded module $(\wgr (X^\diamond))^\diamond$. Thus, $\wgr^\diamond
\nabla(\lambda)$ is obtained
by a dializing $\wgr\Delta(\lambda)=\wgr(\nabla(\lambda)^\diamond)$. Many of the tables given above
and below have natural duals which we use without comment.

Now we
can base change to $k$, to see that $\wgr S(5,5)$ is a graded quasi-hereditary algebra, with
graded PIMS having $\wgr\Delta$-filtrations described by the table below.

\medskip
\begin{center}
\begin{tabular}{|c||c|c|c|c|c|}\hline
& $\wgr P(7)$ & $\wgr P(2)$ & $\wgr P(6)$ & $\wgr P(5)$ & $\wgr P(4)$\\
\hline\hline
0 & $\begin{smallmatrix}\wgr \Delta(7)\\ \wgr\Delta(2)\end{smallmatrix}$ & $\wgr\Delta(2)$ & $\wgr\Delta(6)$ & $\begin{smallmatrix}\wgr\Delta(5)\\ \wgr\Delta(4)\end{smallmatrix}$ & $\wgr \Delta(4)$ \\
\hline 1 & $\wgr \Delta(6)\langle 1\rangle\oplus \wgr \Delta(6)\langle 1\rangle$ & $\wgr \Delta(6)\langle 1\rangle$ & $\begin{smallmatrix}\wgr \Delta(5)\langle 1\rangle \\ \wgr \Delta(4)\langle 1\rangle\end{smallmatrix}$ &&\\
\hline
2 & $\begin{smallmatrix}\wgr\Delta(5)\langle 2\rangle\\ \wgr\Delta(4)\langle 2\rangle\end{smallmatrix}$ & $\wgr\Delta(5)\langle 2\rangle$ &&& \\
\hline\end{tabular}\end{center}

We can also obtain the following resolutions:

$$\begin{cases} 0\to \wgr P(4)\to\wgr\Delta(4)\to 0\\
0\to\wgr P(4) \to \wgr P(5)\to \wgr\Delta(5)\to 0\\
0\to\wgr P(5)\langle 1\rangle \to \wgr P(6)\to\wgr\Delta(6) \to 0\\
0\to \wgr P(4)\langle 2\rangle \to \wgr P(6)\langle 1\rangle \to \wgr P(2)\to\wgr \Delta(2)\to 0\\
0\to \wgr P(4)\langle 2\rangle \to \wgr P(5)\langle 2\rangle\to \wgr P(6)\langle 1\rangle \oplus \wgr P(2)\to
\wgr P(7)\to
\wgr \Delta(7)\to 0.
\end{cases}$$

These are obtained from the above table, together with examination of various spaces
$\grHom(\wgr P(\lambda),\wgr^\diamond\nabla(\mu)\langle s\rangle)$ for various $\lambda,\mu\in
\Lambda$.\footnote{Here $\wgr^\diamond\nabla(\mu)$ is the co-standard module for $\wgr A$
associated to $\mu$.} For example, consider the more detailed structure of $\wgr P(7)$ which
the aim of providing a graded projective cover of the kernel $M$ of the
homomorphism $\wgr P(7)\twoheadrightarrow \wgr\Delta(7)$. From the table $\wgr\Delta(4)\langle 2\rangle$ appears once in a graded $\wgr\Delta$-filtration of $\wgr P(\lambda)$ appearing as a submodule.  Consequently,
 $$\dim\grHom(\wgr P(7),\wgr^\diamond\nabla(4)\langle 2\rangle)=1.$$
 The image of a non-zero element $f\in\grHom(\wgr P(7),\wgr^\diamond\nabla(4)\langle 2\rangle)$ must clearly be all
 of $\wgr^\diamond\nabla(4)\langle 2\rangle$, since the latter has head $L(7)$. Restricting $f$ to
 $M$ picks out a filtered submodule
 $$ \boxed{\begin{matrix} \wgr\Delta(6)\langle 1\rangle\\
 \wgr\Delta(5)\langle 2\rangle \\
 \wgr\Delta(4)\langle 2\rangle
 \end{matrix}}$$
 with image 
 $$\rad\wgr^\diamond\nabla(4)\langle 2\rangle =\boxed{\begin{matrix} L(6)\langle 1\rangle\\ L(5)\langle
  2\rangle\\
 L(4)\langle 2\rangle
 \end{matrix}}$$
 It follows easily that the left hand module is indecomposable with a simple head $L(6)\langle 1\rangle$.
 Consequently, it must be isomorphic to $\wgr P(6)\langle 1\rangle$. The quotient of $M$ by this
 submodule has a filtration 
 $$\boxed{\begin{matrix} \wgr P(7) \\ \wgr \Delta(2)\\ \wgr \Delta(6)\langle 1\rangle.\end{matrix}}$$
 The space $\grHom(\wgr P(7),\wgr\nabla^\diamond(6)\langle 1\rangle)$ is 2-dimensional. One of its
 basis elements has already been ``used" in the embedding $\wgr P(6)\langle 1\rangle/N\subseteq \wgr P(7)/N$
 with $N=\begin{smallmatrix}\wgr\Delta(5)\langle 2\rangle \\ \wgr\Delta(4)\langle 2\rangle\end{smallmatrix}$
 Any second basis element of this hom-space must have image 
 $$\boxed{\begin{matrix} L(7)\\ L(2)\\  L(7)\\
 L(6)\langle 1\rangle\end{matrix}}$$
 It follows $M/\wgr P(6)\langle 1\rangle$ is a homomorphic image of $\wgr P(2)$. Considering filtration
 multiplicities, the ($\wgr \Delta$-filtered) kernel of $\wgr P(6)\langle 1\rangle \to \wgr P(2)\to M\to 0$
 must be $\wgr\Delta(5)$. The rest of the resolution is easy.

\begin{thm}\label{big example} Let $A$ be the principal block of $S(5,5)$ for $p=2$. Then $\wgr A$ is standard Q-Koszul.
\end{thm}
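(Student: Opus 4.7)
The plan is to verify the two conditions defining standard Q-Koszulity by direct computation, drawing on the data already assembled in this section: the $\wgr\Delta$-filtered structure of the graded PIMs $\wgr P(\lambda)$ displayed above, the five explicit minimal graded projective resolutions of the $\wgr\Delta(\lambda)$, and the grade-preserving contravariant duality $\diamond$ on $\wgr A$-grmod inherited from the anti-involution on the integral form of $A$.

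The key hom-adjunction, arising because $\nabla_0(\mu)$ is concentrated in grade $0$ and is annihilated by $\wgr A_{\geq 1}$, reads: for any $\rho,\mu\in\Lambda$ and any integers $s,r$,
\[\grHom_{\wgr A}(\wgr P(\rho)\langle s\rangle,\nabla_0(\mu)\langle r\rangle)\;\cong\;\delta_{s,r}\cdot\Hom_{(\wgr A)_0}(P^0(\rho),\nabla_0(\mu)),\]
where $P^0(\rho)$ denotes the PIM for $(\wgr A)_0$ at $\rho$, identified with the $(\wgr A)_0$-quotient $\wgr P(\rho)/\wgr A_{\geq 1}\cdot\wgr P(\rho)$. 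The dimension of the right-hand side equals the multiplicity $[\nabla_0(\mu):L(\rho)]$, computed via BGG-reciprocity for the QHA $(\wgr A)_0$ as the number of sections $\wgr\Delta(\mu)\langle 0\rangle$ in the $\wgr\Delta$-filtration of $\wgr P(\rho)$, which can be read off directly from the filtration table above.

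To verify condition (a), I examine each of the five resolutions. For $\lambda\in\{4,6,2\}$, every projective summand $\wgr P(\rho)\langle s\rangle$ in cohomological position $n$ satisfies $s=n$, so the adjunction immediately yields $\grExt^n_{\wgr A}(\wgr\Delta(\lambda),\nabla_0(\mu)\langle r\rangle)=0$ for all $n\neq r$. The nontrivial cases are $\lambda=5$ and $\lambda=7$, whose resolutions contain off-diagonal summands with $s<n$: an unshifted $\wgr P(4)$ at position $1$ for $\wgr\Delta(5)$, and in the resolution of $\wgr\Delta(7)$ both an unshifted $\wgr P(2)$ at position $1$ and a $\wgr P(4)\langle 2\rangle$ at position $3$. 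For each such off-diagonal summand the relevant boundary map in the $\grHom$-complex is a map between two one-dimensional spaces, both $\cong k$, and I must verify that it is an isomorphism, so that the would-be Ext-contribution is cancelled. This isomorphism follows by analysing the defining map $\wgr P(\rho')\hookrightarrow\wgr P(\rho)$ of the off-diagonal summand: it carries the nontrivial short extension of simples living in the $(\wgr A)_0$-radical of $\wgr P(\rho)$ onto a matching nontrivial extension inside $\nabla_0(\mu)$, a matching forced by the $\wgr\Delta$-filtration multiplicities, which themselves are forced by the standard Koszulity of the characteristic-zero algebra $A'$ established in \cite{SVV}. Condition (b) of standard Q-Koszulity then follows from (a) by applying the duality $\diamond$, which interchanges $\wgr\Delta\leftrightarrow\wgr^\diamond\nabla$ and $\Delta^0\leftrightarrow\nabla_0$ while reversing grading shifts $\langle r\rangle\mapsto\langle -r\rangle$.

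The principal obstacle is the bookkeeping for the four-term resolution of $\wgr\Delta(7)$, where two off-diagonal contributions must be simultaneously cancelled and the associated composite boundary maps traced through the detailed $\wgr\Delta$-filtration analysis of $\wgr P(7)$ carried out above. That every off-diagonal contribution is exactly paired with and killed by an adjacent term in the resolution is the essential technical content of the theorem, and is ultimately an expression of the standard Koszulity of $A'$ surviving into characteristic $2$ via the $\wgr$ construction on the integral form.
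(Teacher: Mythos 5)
Your plan is correct and matches the paper's own argument, which is exactly a computation of the graded Ext-groups from the five explicit graded projective resolutions (the paper works out the representative case $\grExt^1_{\wgr A}(\wgr\Delta(7),\rnabla(2))=0$ and declares "the other cases are checked similarly"). Your version is somewhat better organized: you make explicit the adjunction that reduces everything to multiplicities $[\nabla_0(\mu):L(\rho)]$, you isolate precisely which summands of the resolutions (the unshifted $\wgr P(4)$ at position $1$ for $\wgr\Delta(5)$, and the unshifted $\wgr P(2)$ at position $1$ together with $\wgr P(4)\langle 2\rangle$ at position $3$ for $\wgr\Delta(7)$) are the only potential obstructions, and you note that condition (b) follows from (a) via the $\diamond$-duality, a step the paper leaves implicit; but the underlying verification is the same.
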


\begin{proof}  The resolutions
above can be used to compute $\grExt^n_{\wgr A}(\wgr\Delta(\mu),\rnabla(\lambda)\langle r\rangle)$. For example,
we show  $\grExt^1_{\wgr A}(\wgr\Delta(7),\rnabla(2))=0$. Here the space $\grHom(\wgr P(2),\rnabla(2))$
of 1-cocycles  are also 1-coboundaries:
$$\begin{aligned} \grHom(\wgr P(2),\rDelta(2))&
\cong\grHom(\rDelta(2),\rnabla(2))\\
&\cong\grHom(\begin{smallmatrix} \rDelta(7)\\ \rDelta(2)\end{smallmatrix},\rnabla(2))\\
&\cong\grHom(P_0(7),\rnabla(2))\\
&\cong \grHom(\wgr P(7),\rnabla(2)).\end{aligned}
$$
The other cases are checked similarly.\end{proof}

\begin{rems} (a) Any dominant weight $\lambda$ for a semsimple, simply connected algebraic group $G$ can
be uniquely written $\lambda=\lambda_0+p\lambda$, where  $\lambda_0$ in $p$-restricted and $\lambda_1$ is
dominant. Put $\Delta^p(\lambda):=L(\lambda_0)\otimes\Delta(\lambda_1)^{[p]}$.  In 1980, Jantzen \cite{Jan} raised question whether every standard module $\Delta(\lambda)$
for a semisimple group $G$ has a $\Delta^p$-filtration, i.~e., a filtration with sections $\Delta^p(\mu)$.
See also \cite{A1}. While $\Delta(3,2)$ (as discussed in \S5.1) does not have an $\rDelta$-filtration, it does have
a $\Delta^p$-filtration. However, we know of no analogue of Q-Koszul algebras involving semisimple
groups with uses the $\Delta^p$-modules in place of the $\rDelta$-modules.

(b) It is especially interesting to compare the resolutions for $\wgr\Delta(i)$ above with the corresponding
resolutions at the quantum level (i.~e., for the $\gr \Delta'(i)$). The latter resolutions can be easily be obtained from those of the
integral versions of the $\wgr\Delta$'s and base change, together with the tables for the various
$\gr P'$-modules. We give them below: 
$$
\begin{cases} 0\to \gr P'(4)\to\gr\Delta'(4)\to 0\\
0\to\gr P'(4) \to \gr P'(5)\to \gr\Delta'(5)\to 0\\
0\to\gr P'(4)\langle 1\rangle\oplus \gr P'(5)\langle 1\rangle \to \gr P'(6)\to\gr\Delta'(6) \to 0\\
0\to \gr P'(4)\langle 2\rangle \to \gr P'(6)\langle 1\rangle \to \gr P'(2)\to\gr \Delta'(2)\to 0\\
0 \to \gr P'(5)\langle 2\rangle\to \gr P'(6)\langle 1\rangle\to
\gr P'(7)\to
\gr \Delta'(7)\to 0.
\end{cases}$$
In spite of the differences with the resolutions for the $\wgr\Delta$-modules given above, the
reader may check in each case that they lead to 
\begin{equation}\label{equality}\dim\grExt^n_{\wgr A}(\wgr \Delta(\mu), \rnabla(\lambda)\langle m\rangle)=\dim
\grExt^n_{\gr A'}(\gr\Delta'(\mu),L'(\lambda)\langle m\rangle).\end{equation}
Important in verifying this is the fact that each $(\wgr P(\lambda))_0=P_0(\lambda)$ does have
a $\rDelta$-filtration.  For example, $\grHom_{\wgr A}(P(5)\langle 1\rangle, \rnabla(4)\langle1\rangle)$ is 1-dimensional,
since $(\wgr P(5))_0=P_0(5)=\begin{smallmatrix} \rDelta(5)\\ \rDelta(4)\end{smallmatrix}$. This
discussion and others like it (such as the sample calculation in the proof of Theorem \ref{big example}) go through, even though 
$\wgr P(\lambda)_m$, $m>0$, may not have a $\rDelta$-filtration.  For example, $\wgr P(2)_4$
does not have a $\rDelta$-filtration. In spite of the latter anomaly, we still have the nice equality
(\ref{equality}) and Theorem \ref{big example}. This suggests it is more important to have $\rDelta$-filtrations at the ``top."
Also,  (\ref{equality}) has influenced a conjecture in the next section.

\end{rems}

\medskip\medskip
\begin{center}{\bf Part III: Conjectures}\end{center}
\medskip

\section{Some conjectures} Let $R$ be a classical finite root system, which we temporarily assume
 is irreducible. Let $D=1$ (respectively, 2; 3) if $R$ has type 
$A_n, D_n, E_6, E_7, E_8$ (respectively, $B_n, C_n, F_4$; $G_2$). Let $\rho=\frac{1}{2}\sum_{\alpha\in R^+}
\alpha$ be the Weyl weight. Define $g=(\rho,\theta^\vee_r)$, where $\theta_r$ is the maximal root in $R^+$.
Let $\widehat{\mathfrak g}$ be the (infinite dimensional) untwisted affine Lie algebra associated to $R$, and let
$$\widetilde{\mathfrak g} =[{\widehat{\mathfrak g}},\widehat{\mathfrak g}]=\left({\mathbb C}[t,t^{-1}]\otimes{\mathfrak g}\right)\oplus 
{\mathbb C}c$$
be its commutator subalgebra. For any $\kappa\in\mathbb Q$, consider the category $\sO_\kappa$ of $\widetilde{\mathfrak g}$-modules
satisfying certain natural properties (especially, that the central element $c$ acts as multiplication by $k$). We do not list these here, but refer instead to \cite[p. 270]{T}.

Kazhdan-Lusztig have defined, for a positive integer $\ell$, 
a functor 
\begin{equation}\label{KLcorrespondence} F_\ell:\sO_{-(\ell/2D)-g}\longrightarrow {\mathcal Q}_\ell.\end{equation}
Here ${\mathcal Q}_\ell$ is the category of integrable, type 1 modules for the Lusztig quantum
enveloping algebra corresponding to $R$ at a primitive $\ell$th root of 1. The functor $F_\ell$ is discussed
in \cite{T}, whose treatment we largely follow.
We will
be interested in the case in which $\ell$ is associated to a prime integer $p$ by the formula
\begin{equation}\label{LP}\ell=\ell(p)=\begin{cases} p
 \quad{\text{\rm  if $p$ is odd}};\\ 4\quad{\text{\rm  if $p=2$.}}\end{cases}\end{equation}
 
\medskip\noindent
\begin{defn} If $R$ is an irreducible root system, a prime $p$ is KL-good provided $F_\ell$ is an equivalence for $\ell=\ell(p)$.  Also, we assume $p\not=2$ if $R$ has type $B_n$, $C_n,$ or $F_4$, and $p\not=3$ is if $R$
has type $G_2$. More generally, if $R$ is any finite root system, then a prime $p$ is KL-good if it is KL-good for
each irreducible component of $R$.   \end{defn}

\medskip
If $R$ is of type $A_n$, every prime is KL-good. If $R$ has type $D_n$, then every odd prime is KL-good, and
$p=2$ is also KL-good if $n$ is even.  Finally, in any type, if $p>h$, the Coxeter number, then $p$ is KL-good.  See \cite[Rem. 7.3]{T} for more details.\footnote{The only cases in which $p=2$ is known to be KL-good are type $A_n$ and type $D_{2n}$.   All odd primes are known to be KL-good for simply laced classical root systems ($A_n$ and $D_n$). It would be
good to know if this fact remains true in the non-simply laced classical cases ($B_n$ and $C_n$), or at least have a bound independent of the root system.} 

We will not make use of the Kazhdan-Lusztig correspondence $F_\ell$ in the discussion below. However, it
motivates the restrictions on $p$ in the conjectures that follow. We will discuss this motivation later in this
section after Conjecture IIb. In what follows, we make use of the notation introduced at the end of Section 3.

\medskip
\noindent
{\bf Conjecture I:} {\it Assume that $G$ is a semisimple, simply connected  algebraic group, defined and split over ${\mathbb F}_p$, $p$ a prime.   Assume $p$ is KL-good for the root system $R$ of $G$. Let $\Gamma$ be a
finite ideal in $X(T)_+$, and form the QHA algebra $A:=A_\Gamma$. Then the
graded algebra $\wgr A$ is standard Q-Koszul.}
\medskip

We also expect that the $\sO$-order $\gr \wA=\gr\wA_\Gamma$ is integral quasi-hereditary (as defined in 
\cite{CPS1a}). In fact, this is likely  to be a
key
step in showing that $\wgr A=(\gr \wA)_k$ is quasi-hereditary, an essential ingredient for the standard Q-Koszul
property. If $p\geq 2h-2$ and if $\Gamma$
consists of $p$-regular blocks, then \cite{PS10} establishes that $\gr\wA$ is integral quasi-hereditary. 

We also mention that when $p\geq 2h-2$ is odd and when the Lusztig character formula holds, then Conjecture I is proved in \cite[Thm. 3.7]{PS13a} in the $p$-regular weight case. The conjecture itself has no such restrictions. In
fact, Section 4 proves the conjecture for $A$ equal to the Schur algebra $S(5,5)$ when $p=2$. All applicable conjectures
in this section have been similarly checked in that case, based on the results developed in \S6, through full proofs have not always been included there.

\medskip\noindent
{\bf Conjecture II:} {\it Continue to assume the hypotheses and notation of Conjecture I (so, in particular, $p$ is KL-good). Let $A'=A'_\Gamma$ be
the quasi-hereditary quotient algebra of the quantum enveloping algebra $U_\zeta$ at a primitive
$\ell(p)$th-root of unity associated to the ideal $\Gamma$. Then ,
$$\begin{cases}(1)\quad\dim\Ext^n_{A'}(\Delta'(\lambda),L'(\mu))&=\dim\Ext^n_A(\Delta(\lambda),\rnabla(\mu)),\\
(2)\quad \dim\Ext^n_{A'}(L'(\lambda),\nabla'(\mu)) & =\dim\Ext^n_A(\rDelta(\lambda),\nabla(\mu))\\
(3)\quad \dim\Ext^n_{A'}(L'(\lambda),L'(\mu)) & =\dim\Ext^n_A(\rDelta(\lambda),\rnabla(\mu))\end{cases},
\quad \forall \lambda,\mu\in\Lambda, \forall n$$}

\medskip
In the above expressions, the terms $\Ext^n_{A'}$ (respectively, $\Ext^n_A$) on the left (respectively, right) can be
replaced by $\Ext^n_{U_\zeta}$ (respectively, $\Ext^n_G$); see \cite{DS}, for example. When $p>h$ and the Lusztig
character formula holds for restricted dominant weights, then Conjecture II is proved for $p$-regular weights in \cite[Thm. 5.4]{CPS7}.
Some interesting cases of Conjecture II can be similarly proved assuming only $p>h$: Conjecture II(3) holds for all weights $\lambda,\mu\in pX(T)_+$.  Conjecture II(1) (respectively,
Conjecture II(2)) holds for all $\mu\in pX(T)_+$  (respectively, $\lambda\in pX(T)_+$ and all $\lambda$ (respectively, $\mu$) provided only that $p>h$. Note that $\lambda$ and $\mu$ are $p$-regular in this case. This follows from \cite[Thm. 5.4 and \S4]{CPS7}.

Related to this conjecture are the following two conjectures:

\smallskip
\noindent{\bf Conjecture IIa} {\it Under the hypothesis of Conjecture II, we have
$$\begin{cases}(1)\quad \dim\Ext^n_A(\Delta(\lambda),\rnabla(\mu))&=\dim\Ext^n_{\wgr A}(\wgr \Delta(\lambda),\rnabla(\mu))\\
(2)\quad
\dim\Ext^n_A(\rDelta(\mu),\nabla(\lambda)) & =\dim\Ext^n_{\wgr A}(\rDelta(\mu),\wgr^\diamond\nabla(\lambda))\\
(3)\quad\dim\Ext^n_A(\rDelta(\mu),\rnabla(\lambda)) & =\dim\Ext^n_{\wgr A}(\rDelta(\mu),\rnabla(\lambda)).\end{cases}$$
for all $\lambda,\mu\in \Gamma$.}

In part (2) above, $\wgr^\diamond\nabla(\lambda)$ denotes the costandard module corresponding to $\lambda$
in the highest weight category $\wgr A$. It has a natural graded structure, concentrated in non-positive grades, with
$\rnabla(\lambda)$ its grade 0 term.   Under the assumptions that $\lambda,\mu$ are $p$-regular and $p\geq 2h-2$ is
an odd prime, parts (1) and (2) of Conjecture II(a) are proved in \cite[Thm. 6.6]{PS13}, while part (3) is proved in
\cite[Thm. 5.3(b)]{PS13}.

\medskip
\noindent{\bf Conjecture IIb}: {\it Under the hypothesis of Conjecture II, we have
$$\begin{cases}(1)\quad\dim\Ext^n_{A'}(\Delta'(\lambda),L'(\mu))&=\dim\Ext^n_{\gr A'}(\gr \Delta'(\lambda),L'(\mu))\\
(2)\quad\dim\Ext^n_{A'}(L'(\lambda),\nabla'(\mu))&=\dim\Ext^n_{\gr A'}(L'(\lambda),\gr^\diamond\nabla'(\mu)),\\
(3)\quad\dim\Ext^n_{A'}(L'(\lambda),L'(\mu))&=\dim\Ext^n_{\gr A'}(\gr L'(\lambda),L'(\mu))\end{cases}$$
for all $\lambda,\mu\in \Gamma$ and all $n\in\mathbb N$. Also, $\gr A'$ is a standard Koszul algebra. }

\medskip
In fact, it also can be conjectured that the algebra $A'$ itself is standard Koszul.\footnote{A standard Koszul algebra $A$
is a Koszul algebra which is QHA, Koszul, and such that the standard (respectively, costandard) modules linear ``linear" (respectively,
``colinear"). In other words,   $A$ is a standard Q-Koszul algebra in which $A_0$ is semisimple.} In that case, Conjecture IIb would
follow immediately. In type $A_n$, this has been proved in \cite{SVV}, using, among other things, the
Kazhdan-Lusztig correspondence (\ref{KLcorrespondence}). It seems likely that these methods should extend
to all types, as long as the Kazhdan-Lusztig correspondence is an equivalence---in particular, when $p$ is KL-good.

Regarding Conjecture IIb as stated, the authors have proved that $\gr A'$ is standard Koszul in the $p$-regular case
in \cite{PS9} when $p>h$.\footnote{In fact the results of \cite{PS9}, in conjunction with the Ext-formulas
given in \cite{CPS1} that hold in the presence of ``Kazhdan-Lusztig theories," are sufficient to establish all parts of Conjecture IIb in the $p$-regular case, assuming that $p>h$.} Some weaker results, describing semisimple filtrations of standard modules, were proved for 
$p$-singular weights in \cite{PS9a}, sometimes for small $p$, provided the Kazhdan-Lusztig functor $F_{\ell(p)}$
is an equivalence.
\medskip

The next conjecture gives new calculations of Ext-group dimensions in singular weight cases.
Let ${\mathbb E}={\mathbb R}\otimes X(T)$ be the Euclidean space associated to the affine Weyl group
$W_{p}$. Thus, for $\alpha\in R$, $r\in \mathbb Z$, $s_{\alpha,rp}:{\mathbb E}\to\mathbb E$ is the reflection
defined by $s_{\alpha,rp}(x)=x-[(x,\alpha^\vee)t-pr]\alpha.$  Then $W_p$ is generated by the $s_{\alpha,pr}$, and
it is, in fact, a Coxeter group with simple reflections $S:=\{s_\alpha\equiv s_{\alpha,0}\}_{\alpha\in S} \cup \{s_{\alpha_0,-p}\}$.

 Let 
$$\overline{C^-}=\{x\in {\mathbb E}\,|\, 1\leq (x+\rho,\alpha^\vee)\leq p\}$$
be the  closed ``anti-dominant" alcove for $W_p$. For $\lambda\in X(T)$, there exists a unique $\lambda^-\in
\overline{C^-}$ which is $W_p$-conjugate to $\lambda$ under the dot action of $W_p$ on ${\mathbb E}$. Define $\overline{w}\in W_p$ to be the unique element of shortest length in $W_p$ such that $\lambda=\bar w\cdot \lambda^-$.
Alternatively, if $W_I$ denotes the stabilizer (under the dot action) in $W_p$ of an element $\lambda^-\in\overline{C^-}$, and if $w\in W_p$, then $\bar w$ denotes the distinguished (i.~e., smallest length) left coset representative for the
left coset $wW_I$.
 
Let $\sZ:={\mathbb Z}[t,t^{-1}]$ be the algebra of Laurent polynomials. Let $f\mapsto \overline f$ be the automorphism of
$\sZ$ which sends $t$ to $t^{-1}$.  Given $x,y\in W_p$, let $P_{x,y}\in\sZ$ be the Kazhdan-Lusztig polynomial assoicated to the pair
$x,y$. It is known that $P_{x,y}$ is a polynomial in $q:=t^2$.  Also, define
$$P_{\bar y,\bar w}^{\text{\rm sing}}(t):=\sum_{x\in W_I,\bar yx\leq \bar w}(-1)^{\ell(x)}P_{\bar yx,\bar w}(t),$$
 As with $P_{x,y}$, $P_{\bar y,\bar w}^{\text{\rm sing}}(t)$ is also a polynomial
in $t^2$. We have the following conjecture. We continue to assume the hypotheses and notation of Conjecture
II. In partiular, $A'$ is a quotient of $U_\zeta$, where $\zeta$ is an $\ell=\ell(p)$th primitive root of unity with $p$
KL-good. The conjecture likely holds as well for other values of $\ell$,  not associated to any prime---possibly all $\ell\geq 1$
if $R$ is simply laced. See \cite[Conj. 2.3]{L90} which conjectures some version of the Kazhdan-Lusztig correspondence works in these cases.

\medskip\noindent{\bf Conjecture III:}  Let $\Gamma$ be a finite poset in $X(T)_+$. Write $\lambda=\bar w\cdot\lambda^-$ as above. 
If $\mu\in\Gamma$ has the form $\bar y\cdot\lambda^-$ for some distinguished left coset representative $\bar y$
of $W_I$, then
$$\sum_{n\geq 0}\dim\Ext^n_{A'}(\Delta'(\mu),L'(\lambda))t^n\!= t^{\ell(\bar w)-\ell(\bar y)}\bar P_{\bar y,\bar w}^{\text{\rm sing}}(t).$$
(If $\mu$ does not have the form $\bar y\cdot\lambda^-$, then all the groups $\Ext^n_{A'}(\Delta'(\mu),L'(\lambda))=0$, as is well known.)

\medskip
In the above expression, we could replace $A'$ by $U_\zeta$, where $\zeta$ is a primitive $\ell(p)$ of unity.
The polynomials $P^{\text{\rm sing}}_{\bar y,\bar w}$ identify with one class of ``parabolic Kazhdan-Lusztig
polynomials," as introduced by Deodhar \cite{Deo} with a different notation. For further discussion, with a
different application and yet another notation, see \cite[Prop. 2.4, Cor. 4.1]{KT2}. In particular, it is shown these
polynomials always have non-negative coefficients.

 The validity of Conjecture III would explicitly calculate the dimensions of $\Ext$-groups between standard modules
 and irreducible modules  for $A'$. Then the validity of Conjecture II turns this into a similar calculation for the
 algebraic group $G$ in positive characteristic.  
  
 Conjective III also has consequences for $\Ext_{A'}$ between irreducible modules, assuming that Conjectures IIb also holds. Thus, $\gr A'$ is then standard Koszul, and 
 one obtains a product formula like that in (\ref{productformula}) at the
 level of the QHA $\gr A'$. Using Conjecture IIb again gives a calculation of $\Ext^\bullet_{A'}$-groups between irreducible $A'$-modules, once the dimensions of the groups $\Ext^a_{A'}(\Delta'(\nu),L'(\mu))$ and $\Ext^b_{A'}(L'(\lambda),\nabla'(\nu))$ can be determined. But Conjecture III calculates these dimensions in terms of Kazhdan-Lusztig coefficients. As in type $A$,
 one might expect that $A'$ itself is standard Koszul (which would simplify the above discussion).
  
 Once the $\Ext_{A'}$-groups in the above paragraph have been calculated, the corresponding $\Ext^\bullet_A=\Ext^\bullet_G$-groups
 can be calculated, if Conjecture II also holds. Specifically, the dimensions of all groups
  $$\Ext_G^\bullet(\rDelta(\lambda),\rnabla(\mu)), \quad
 \Ext_G^\bullet(\Delta(\lambda),\rnabla(\mu)), \quad \Ext_G^\bullet(\rnabla(\lambda),\nabla(\mu))$$
 for all $\lambda,\mu\in X(T)_+$
 can be calculated explicitly in terms of Kazhdan-Lusztig polynomials and their coefficients.
  Miraculously, all these predicted dimensions are correct for $A=S(5,5)$ and $p=2$.

\begin{rems} \label{finremark} (a) 
Conjecture II is a consequence of Conjectures IIa and IIb, in the presence of Conjecture I. This is obtained by proving,
using the standard Koszulity of $\gr A'$ and the standard  Q-Koszulity of $\wgr A$, that 
\begin{equation}\label{gradedext}\begin{aligned}\dim\grExt^n_{\gr A'}(\gr\Delta'(\lambda),L'(\mu)\langle n\rangle)&=\dim\grExt^n_{\wgr A}(\wgr \Delta(\lambda),
\rnabla(\mu)\langle n\rangle)\\ & \forall \lambda,\mu\in\Lambda, n\in\mathbb N.\end{aligned}\end{equation}
This gives the first equality in Conjecture II, by passing to  Ext-groups and $A,A'$. The second and third
equalities can be proved similarly, using relevant similar versions of (\ref{gradedext}).

Conjecture III is suggested by using $\grExt$-groups in an affine Lie algebra setting, assuming Koszulity. It would
then follow from the existence there of well-behaved graded translation functors, passing from $p$-regular to $p$-singular
graded module categories.

(b) One cumulative effect of all the conjectures is to explicitly compute $\Ext$-groups for $A$ and $\wgr A$ between
objects $\Delta^0(\lambda)$ and $\nabla_0(\mu)$. One can ask if there is any resulting impact on calculations of
similar Ext-groups between irreducible modules. We speculate that the homological algebra of $\wgr A$, e.~g., $\Ext$
between irreducible modules or between irreducible and standard/costandard modules can often be understood in terms of the similar homological algebra
of $(\wgr A)_0$, together with formulas like (\ref{productformula}). Observe that $(\wgr A)_0$
is a quotient of $A$ by a nilpotent ideal, so that $A$, $(\wgr A)_0$, and $\wgr A$ all share the same irreducible
modules. 
\end{rems}


\end{document}